\newtheorem{theorem}{Theorem}[section]
\newtheorem{lemma}[theorem]{Lemma}
\newtheorem{corollary}[theorem]{Corollary}
\newtheorem{proposition}[theorem]{Proposition}
\newtheorem{question}[theorem]{Question}
\theoremstyle{definition}
\newtheorem{claim}{Claim}[theorem]
\newtheorem{definition}[theorem]{Definition}
\theoremstyle{remark}
\newtheorem{remark}[theorem]{Remark}
\numberwithin{equation}{section}
\renewcommand{\L}{\mathsf{L}}
\newcommand{\W}{\mathsf{W}}
\renewcommand{\c}{\mathsf{c}}
\newcommand{\Lip}{\mathsf{Lip}}
\newcommand{\UCont}{\mathsf{UCont}}
\newcommand{\Comp}{\mathsf{Comp}}
\newcommand{\Hyp}{\mathsf{Hyp}}
\newcommand{\Bor}{\mathsf{Bor}}
\newcommand{\F}{\mathcal{F}}
\newcommand{\RR}{\mathbb{R}}
\newcommand{\pre}[2]{{}^{#1} #2}
\newcommand{\pI}{\mathrm{I}}
\newcommand{\pII}{\mathrm{II}}
\newcommand{\D}{\mathsf{D}}
\newcommand{\id}{\operatorname{id}}
\newcommand{\pow}{\mathscr{P}}
\newcommand{\leng}{\operatorname{lh}}
\newcommand{\Deg}{{\sf Deg}}
\newcommand{\AD}{{\sf AD}}
\newcommand{\ZF}{{\sf ZF}}
\newcommand{\AC}{{\sf AC}}
\newcommand{\DC}{{\sf DC}}
\newcommand{\SLO}{{\sf SLO}}
\newcommand{\BP}{{\sf BP}}
\newcommand{\Nbhd}{\boldsymbol{\mathrm{N}}}
\begin{document}


\baselineskip=17pt



\title[Bad Wadge-like reducibilities]{Bad Wadge-like reducibilities \\ on the Baire space}

\author[L. Motto Ros]{Luca Motto Ros}
\address{Albert-Ludwigs-Universit\"at Freiburg \\
Mathematisches Institut -- Abteilung f\"ur Mathematische Logik\\
Eckerstra{\ss}e, 1 \\
 D-79104 Freiburg im Breisgau\\
Germany}
\email{luca.motto.ros@math.uni-freiburg.de}

\date{\today}

\begin{abstract}
We consider various collections of functions from the Baire space \( \pre{\omega}{\omega} \) into itself naturally arising in (effective) descriptive set theory and general topology, including computable (equivalently, recursive) functions, contraction mappings, and functions which are nonexpansive or Lipschitz with respect to suitable complete ultrametrics on \( \pre{\omega}{\omega} \) (compatible with its standard topology).
We analyze the degree-structures induced by such sets of functions when used as reducibility notions between subsets of \( \pre{\omega}{\omega} \), and we show that the resulting hierarchies of degrees are much more complicated than the classical Wadge hierarchy; in particular, they always contain large infinite antichains, and in most cases also infinite descending chains.
\end{abstract}

\subjclass[2010]{Primary 03E15; Secondary 03E60, 03D55, 03D78, 54C10, 54E40}

\keywords{Wadge reducibility, Lipschitz reducibility, computable function, recursive function, contraction mapping, nonexpansive function, Lipschitz function, (ultra)metric Polish space}

\maketitle

\section{Introduction}

Work in \( \ZF + \DC(\RR) \), where \( \DC(\RR) \) is the Axiom of Dependent Choice over the reals, and let \( \pre{\omega}{\omega} \) denote the Baire space of \( \omega \)-sequences of natural numbers (endowed with the product of the discrete topology on \( \omega \)). Given a set of functions \( \F \) from \( \pre{\omega}{\omega} \) into itself and \( A,B \subseteq \pre{\omega}{\omega} \), we set
\[ 
{A \leq_\F B} \iff  \text{there is some } f \in \F \text{ which reduces } A \text{ to } B,
 \] 
where a function \( f \colon \pre{\omega}{\omega} \to \pre{\omega}{\omega} \) \emph{reduces \( A \) to \( B \)} if and only if \( A = f^{-1}(B) \).
When \( A \leq_\F B \) holds, we say that \( A \) is \emph{\( \F \)-reducible} to \( B \). We also set \( {A \equiv_\F B} \iff {A \leq_\F B \leq_\F A} \) and \( {A <_\F B} \iff {A \leq_\F B \wedge B \nleq_\F A} \). If \( \leq_\F \) is a preorder%
\footnote{A binary relation is called \emph{preorder} if it is reflexive and transitive.} 
(which is the case if e.g.\ \( \F \) contains the identity function and is closed under composition), then the relation \( \equiv_\F \) is an equivalence relation, and hence we can consider the \emph{\( \F \)-degree} of a set \( A \subseteq \pre{\omega}{\omega} \) defined by
\[ 
[A]_\F = \{ B \subseteq \pre{\omega}{\omega} \mid B \equiv_\F A \}.
 \] 
A set \( A \subseteq \pre{\omega}{\omega} \) (equivalently, its \( \F \)-degree \( [A]_\F \)) is called \emph{\( \F \)-selfdual} (respectively, \emph{\( \F \)-nonselfdual}) if \( A \leq_\F \pre{\omega}{\omega} \setminus A \) (respectively, \( A \nleq_\F \pre{\omega}{\omega} \setminus A \)).
The collection of all \( \F \)-degrees will be denoted by \( \Deg (\F) \), and for every \( \Gamma \subseteq \pow(\pre{\omega}{\omega}) \) closed under \( \equiv_\F \) we will denote by \(\Deg_{\Gamma}(\F) \)  the collection of all \( \F \)-degrees of sets in \( \Gamma \). The preorder \( \leq_\F \) canonically induces the partial order \( \leq \) on \( \Deg(\F) \) defined by
\[ 
[A]_\F \leq [B]_\F \iff A \leq_\F B,
 \] 
and \( (\Deg(\F), \leq ) \) (sometimes simply denoted by \( \Deg(\F) \) again)  is called \emph{structure of the \( \F \)-degrees} or \emph{degree-structure induced by \( \F \)} or \emph{\( \F \)-hierarchy (of degrees)}. Similar terminology and notation will be used when considering the restriction of \( \leq \) to \( \Deg_\Gamma(\F) \) for some \( \Gamma \) as above.

Several preorders of the form \( \leq_\F \) have been fruitfully considered in the literature until now, including those where \( \F \) is one of the following sets of functions (see Section~\ref{sec:definitions} for the omitted definitions):
\begin{enumerate}[(1)]
\item
the collection \( \L \) of all \emph{nonexpansive functions} and the collection \( \W \) of all \emph{continuous functions} (see e.g.\ the survey paper~\cite{VanWesep:1978} or the more recent~\cite{Andretta:2007});
\item
the collection \( \Lip \) of all \emph{Lipschitz functions} and the collection \( \UCont \) of all \emph{uniformly continuous functions} (see~\cite{MottoRos:2010});
\item
the collection \( \Bor \) of all \emph{Borel(-measurable) functions} (see~\cite{Andretta:2003d});
\item
for \( 1 \leq \alpha < \omega_1 \), the collection \( \D_\alpha \) of all \emph{\( \boldsymbol{\Delta}^0_\alpha \)-functions}, i.e.\ of those \( f \colon \pre{\omega}{\omega} \to \pre{\omega}{\omega} \) such that \( f^{-1}(D) \in \boldsymbol{\Delta}^0_\alpha \) for all \( D \in \boldsymbol{\Delta}^0_\alpha \) (see~\cite{Andretta:2006} for the case \( \alpha = 2 \) and~\cite{MottoRos:2009} for arbitrary \( \alpha \)'s);
\item
for \( 1 \leq \gamma < \omega_1 \) an additively closed ordinal,%
\footnote{The ordinal \(\gamma\) is \emph{additively closed} if \( \alpha + \beta < \gamma \) for every \( \alpha, \beta < \gamma \). This condition is required to ensure that \( \mathscr{B}_{\gamma} \) be closed under composition.}
the collection \( \mathscr{B}_\gamma \) of all functions which are of Baire class \( < \gamma \) (see~\cite{MottoRos:2010});
\item
for \( 1 \leq n \in \omega \), the collection of all \emph{\( \boldsymbol{\Sigma}^1_{2n} \)-measurable functions} (see \cite{MottoRos:2010b}).
\end{enumerate}

Assuming the Axiom of Determinacy \( \AD \), the degree-structure induced by each of the above sets of functions \( \F \) is extremely well-behaved: 
it is well-founded and almost linear, meaning that antichains have size at most \( 2 \) and are in 
fact \emph{\( \F \)-nonselfdual pairs}, i.e.\ they are of the form 
\( \{ [A]_\F, [\pre{\omega}{\omega} \setminus A]_\F \} \) 
for some \( \F \)-nonselfdual \( A \subseteq \pre{\omega}{\omega} \). Therefore all the above notions of reducibility can be reasonably used as tools for measuring 
the complexity of subsets of 
\( \pre{\omega}{\omega} \).

Notice that, except for the case when \( \F \) is the collection of all \( \boldsymbol{\Sigma}^1_{2n} \)-measurable functions, the axiom \( \AD \) is always used only in a local way to determine the degree-structures described above, that is: if 
\( \boldsymbol{\Gamma} \subseteq \pow(\pre{\omega}{\omega}) \) is closed under continuous preimages (i.e.\ \( \boldsymbol{\Gamma} \) is a boldface pointclass), the structure 
\( \Deg_{\boldsymbol{\Gamma}}(\F) = (\Deg_{\boldsymbol{\Gamma}}(\F), \leq) \) of the \( \F \)-degrees of sets in \( \boldsymbol{\Gamma} \) can be fully determined as soon as we 
assume the determinacy of games with payoff set in the closure under complements and finite intersections of \( \boldsymbol{\Gamma} \). Therefore, 
if \( \boldsymbol{\Gamma}\) is the collection of all Borel sets, we do not need to explicitly assume any determinacy axiom because of 
Martin's Borel determinacy (see e.g.\ \cite[Theorem 20.5]{Kechris:1995}).%
\footnote{In fact, in the very special case of Borel sets one can even just work in second-order arithmetic (discharging our original 
assumption \( \ZF + \DC(\RR) \)) by~\cite{Louveau:1988}.} 
Similar considerations will apply to the results of this paper as well, so readers unfamiliar with determinacy axioms may simply drop them, and restrict the attention to Borel subsets of \( \pre{\omega}{\omega} \) throughout the paper.

In general, all classes of functions \( \F \) used as reducibility notions (included the ones above) are required to contain at least all 
nonexpansive functions--- in fact, the condition \( \F \supseteq \L \) is part of the definition of the notion of \emph{set of reductions} introduced in~\cite[Definition 1]{MottoRos:2009}. Why it is so? On the one hand this condition already guarantees that the resulting \( \F \)-hierarchy
 of degrees is well-behaved (in fact, when \( \F \supseteq \L \) only a few characteristics of the induced hierarchy of degrees really depend on the actual \( \F \), 
see Theorem~\ref{th:Fhierarchy} below). On the other hand, the received opinion is that  if \( \F \) lacks such a condition then it is very likely that 
the resulting structure of degrees will not be 
well-behaved, i.e.\ it will contain infinite descending chains and/or infinite antichains (see Definition~\ref{def:verygood}). 
However, besides the trivial example of constant functions briefly considered in~\cite[Section 3]{MottoRos:2009}, to the best 
of our knowledge the problem of whether this opinion is correct has been overlooked in the literature: in particular, 
no ``natural'' example of an \( \F \) inducing an ill-founded hierarchy of degrees (without further set-theoretical assumptions) has 
been presented so far. 

In this note, we fill this gap and confirm the above mentioned intuition by considering 
some concrete examples of sets of functions \( \F \not\supseteq \L \) which naturally 
appear in (effective) descriptive set theory and in general topology. In particular, after recalling some basic notation and results in Section~\ref{sec:definitions},
in Section~\ref{sec:computable} 
we show that when considering the effective counterpart of the
 \( \W \)-hierarchy one gets a considerably complicated structure of degrees (Theorem~\ref{th:computable}). In Section~\ref{sec:contractions} we fully describe the 
hierarchy of degrees induced by the collection \( \c \) of all \emph{contractions} (see Figure~\ref{fig:chierarchy}), showing in particular that such hierarchy contains infinite antichains (Corollary~\ref{cor5}) but no descending chains (Corollary~\ref{cor:6}). The analysis of the \( \c \)-hierarchy  involves 
a characterization of the selfcontractible subsets of \( \pre{\omega}{\omega} \) (see Definition~\ref{defcontractible} and Corollary~\ref{cor2}) which may be of independent 
interest. Finally, 
in Section~\ref{sec:changingmetric} we show that the behavior of the classical \( \L \)-hierarchy heavily relies on the chosen metric: replacing in the definition of \( \L \) (or of \( \Lip \)) the ``standard'' metric \( d \) with another complete ultrametric (still compatible 
with the topology of \( \pre{\omega}{\omega} \)) may in fact yield to extremely wild hierarchies of degrees (Theorems~\ref{th:maind_0} and~\ref{th:maind_1}).

\section{Definitions and preliminaries} \label{sec:definitions}

\subsection*{Basic notation}

The power set of \( X \) is denoted by \( \pow(X) \). The identity function on \( X \) is denoted by \( \id_X \), with the reference to \( X \) dropped when this is not a source of confusion.
When \( A \subseteq X \), we will write \( \neg A \) for \( X \setminus A \) whenever the space \( X \) is clear from the context. The reals are denoted by \( \RR \), and we set \( \RR^+ = \{ r \in \RR \mid r \geq 0 \} \).
The set of natural numbers is denoted by \(\omega\), and \( \pre{\omega}{\omega} \) and 
\( \pre{< \omega}{\omega} \) denote the collections of, respectively, all \(\omega\)-sequences and all finite sequences
 of natural numbers. For \( s \in \pre{< \omega}{\omega} \), \( \leng(s) \) denotes the length of \( s \), and if 
\( x \in \pre{< \omega}{\omega} \cup \pre{\omega}{\omega}\) then \( s {}^\smallfrown{}  x \) denotes 
the concatenation of \( s \) with \( x \). To simplify the notation, when \( s = \langle n \rangle \) for some \( n \in \omega \) we will 
write e.g.\ \( n {}^\smallfrown{} x \) in place of the formally more correct 
\( \langle n \rangle {}^\smallfrown{} x \) (similar simplifications will be applied also to the other notation below). 
If \( A \subseteq \pre{\omega}{\omega} \) and \( s \in \pre{< \omega}{\omega} \), we set 
\( s {}^\smallfrown{}  A = \{ s {}^\smallfrown{} x \mid x \in A  \} \) and
 \( A_{\lfloor s \rfloor} = \{ x \in \pre{\omega}{\omega} \mid s {}^\smallfrown{} x \in A \} \). If 
\( A_n \subseteq \pre{\omega}{\omega} \) for every 
\( n \in \omega \), \( \bigoplus_{n \in \omega} A_n = \bigcup_{n \in \omega} n {}^\smallfrown{}  A_n \). 
When \( A,B \subseteq \pre{\omega}{\omega} \), we also set \( A \oplus B = \bigoplus_{n \in \omega} C_n \), 
where \( C_{2i} = A \) and \( C_{2i+1} = B \) for every \( i \in \omega \).
Given \( n, i  \in \omega \), the symbol \( n^{(i)} \) denotes the unique sequence of length \( i \) which 
is constantly equal to \( n \), and similarly \( \vec{n} \) denotes the \(\omega\)-sequence with constant value \( n \).

\subsection*{The Baire space}

When endowing \( \pre{\omega}{\omega} \) with the product of the discrete topology on \(\omega\), the resulting topological space is called \emph{Baire space}. It is a zero-dimensional Polish space (i.e.\ a completely metrizable second-countable topological space admitting a basis of clopen sets). A compatible complete metric \( d \colon (\pre{\omega}{\omega})^2  \to \RR^+ \) for \(\pre{\omega}{\omega}\) is given by
\[ 
d(x,y) = 
\begin{cases}
0 & \text{if } x = y \\
2^{-n} & \text{if } x \neq y \text{ and $n \in \omega$ is smallest such that } x(n) \neq y(n).
\end{cases}%
 \]
In fact, \( d \) is an \emph{ultrametric} (that is, it satisfies \( d(x,y) \leq \max \{ d(x,z),d(y,z) \} \) for every \( x,y,z \in \pre{\omega}{\omega} \)), and it will be referred to as the \emph{standard metric} on \( \pre{\omega}{\omega} \).

For \( s \in \pre{< \omega}{\omega} \) we set
\[ 
\Nbhd_s = \{ x \in \pre{\omega}{\omega} \mid s \subseteq x \}.
 \] 
The collection \( \{ \Nbhd_s \mid s \in \pre{< \omega}{\omega} \} \) is a countable clopen basis for the topology of \( \pre{\omega}{\omega} \), and in fact it is the collection of all open balls with respect to \( d \).

\subsection*{Classes of functions}

Fix a metric space \( X = (X,d) \). 

\begin{definition} \label{def:definitionfunctions}
A function \( f \colon X \to X \) is called: 
\begin{itemize}
\item
\emph{Lipschitz with constant \( L \in \RR^+ \)}
if \( d(f(x),f(y)) \leq L \cdot d(x,y) \) for every \( x,y \in X \);
\item
\emph{contraction}
if it is Lipschitz with constant \( L < 1 \);
\item
\emph{nonexpansive}
if it is Lipschitz with constant \( L \leq 1 \);
\item
\emph{Lipschitz}
if it is Lipschitz with constant \( L \) for some \( L \in \RR^+ \);
\item
\emph{uniformly continuous}
if for every \( \varepsilon > 0 \) there is \( \delta>0 \) such that \(  d(x,y) < \delta \Rightarrow d(f(x),f(y)) < \varepsilon  \) for every \( x,y \in \pre{\omega}{\omega} \);
\item
\emph{continuous}
if for every \( x \in \pre{\omega}{\omega} \) and every \( \varepsilon > 0 \) there is \( \delta > 0 \) such that \(  d(x,y) < \delta \Rightarrow d(f(x),f(y)) < \varepsilon  \) for every \( y \in \pre{\omega}{\omega} \).
\end{itemize}

The collection of all contractions (respectively, nonexpansive functions, Lipschitz functions, uniformly continuous functions, continuous functions) from the metric space \( \pre{\omega}{\omega} = ( \pre{\omega}{\omega},d) \) into itself will be denoted by \( \c \) (respectively, \( \L \), \( \Lip \), \( \UCont \), \( \W \)).
\end{definition}

When we want to stress the dependence of the
 corresponding definitions on the standard metric \( d \), we will write \( \c(d) \) (respectively, \( \L(d) \), \( \Lip(d) \), \( \UCont(d) \)) in place of \( \c \)
 (respectively, \(\L \), \( \Lip \), \( \UCont \)).%
\footnote{As it is well-known, the collection \( \W \) does not really depend on \( d \) but only on its induced topology.}
 In Section~\ref{sec:changingmetric}, we will consider also different metrics 
\( d' \) on \( \pre{\omega}{\omega} \), and therefore we will denote by \( \c(d') \) (respectively, \( \L(d') \), \( \Lip(d') \),  
\( \UCont(d') \)) the class of all contraction (respectively, nonexpansive, Lipschitz, uniformly continuous) mappings from 
the metric space \( (\pre{\omega}{\omega},d') \) into itself.

\begin{remark} \label{rmk:definitionfunctions}
\begin{enumerate}[i)]
\item
Nonexpansive functions from \( \pre{\omega}{\omega} \) into itself are often called ``Lipschitz functions'' in papers dealing with Wadge theory (see e.g.\ the survey papers~\cite{VanWesep:1978,Andretta:2007}): this is why their collection is usually denoted by \( \L \). However, since in this paper we will also consider the collection \( \Lip \) of all Lipschitz functions (with arbitrary constant), we had to disambiguate the terminology.
\item
The class of all continuous functions from \( \pre{\omega}{\omega} \) into itself is usually denoted by \( \W \) in honor of W.~W.~Wadge, who  initiated a systematic analysis of the associated reducibility preorder \( \leq_ \W \).
\item
Clearly we have
\[ 
\c \subsetneq \L \subsetneq \Lip \subsetneq \UCont \subsetneq \W,
 \] 
and \( \c \) is closed under both left and right composition with nonexpansive functions.
\end{enumerate}
\end{remark}

All classes of functions \( \F \) from \( \pre{\omega}{\omega} \) into itself considered in
 Definition~\ref{def:definitionfunctions} or in the comment following it are closed under composition, and 
(except for \( \c \) and its variants) they contain \( \id = \id_{\pre{\omega}{\omega}} \). Therefore, 
all such \( \F \neq \c \) induce a reducibility \emph{preorder} \( \leq_ \F \), and consequently we can analyze their induced 
degree-structures \( \Deg(\F) = (\Deg(\F),\leq) \). As for \( \F = \c \), the relation \( \leq_{\c} \) is transitive but 
in general not reflexive (see Lemma~\ref{lemma:banach}). Nevertheless it can be naturally extended to a 
preorder, which will be denoted  by \( \leq_{\c} \) again, by setting for  \( A,B \subseteq \pre{\omega}{\omega} \)
\[ 
A \leq_\c B \iff \text{either } A = B \text{ or } A  = f^{-1}( B) \text{ for some } f \in \c.
 \] 
(Equivalently, \( \leq_\c \) is the preorder induced by considering as reducing functions those in the collection \( \F = \c \cup \{ \id \} \): notice that such set of functions remains closed under composition.)  
We will see in Lemma~\ref{lemma:banach} that all sets \( A \subseteq \pre{\omega}{\omega} \) are \( \c \)-nonselfdual, i.e.\ that \( A \nleq_\c \neg A \).
Notice also that for every \( A,A',B,B' \subseteq \pre{\omega}{\omega} \), if \( A = f^{-1}(B) \) for some \( f \in \c \), then
\begin{equation} \label{eq:composition}
 {{A' \leq_\L A} \wedge {B \leq_\L B'}  \Rightarrow {A' \leq_\c B'}}
\end{equation}
by part iii) of Remark~\ref{rmk:definitionfunctions}. 

\subsection*{Boldface pointclasses}

A \emph{boldface pointclass} \( \boldsymbol{\Gamma} \) is a nonempty collection of subsets of 
\( \pre{\omega}{\omega} \) which is closed under continuous preimages, that is \( B \in \boldsymbol{\Gamma} \)
 whenever \( B \leq_\W A \) for some \( A \in \boldsymbol{\Gamma} \). The \emph{dual} of \( \boldsymbol{\Gamma} \) 
is the boldface pointclass \( \check{\boldsymbol{\Gamma}} = \{ \neg A \mid A \in \boldsymbol{\Gamma} \} \), and 
the associated \emph{ambiguous pointclass} is the boldface pointclass 
\( \boldsymbol{\Delta}_{\boldsymbol{\Gamma}} = \boldsymbol{\Gamma} \cap \check{\boldsymbol{\Gamma}} \). A
 boldface pointclass \( \boldsymbol{\Gamma} \) is \emph{nonselfdual} if 
\( \boldsymbol{\Gamma} \neq \check{\boldsymbol{\Gamma}} \), and \emph{selfdual} otherwise. A set
 \( A \subseteq \pre{\omega}{\omega} \) is \emph{properly in \( \boldsymbol{\Gamma} \)} or is a \emph{proper 
\( \boldsymbol{\Gamma} \) set} if \( A \in \boldsymbol{\Gamma} \setminus \check{\boldsymbol{\Gamma}} \). 
Given a boldface pointclass \( \boldsymbol{\Gamma} \) and a collection of functions \( \F \) from 
\( \pre{\omega}{\omega} \) into itself, we say that \( A \subseteq \pre{\omega}{\omega} \) is 
\emph{\( \F \)-complete for \( \boldsymbol{\Gamma} \)} if and only if \( A \in \boldsymbol{\Gamma} \) and 
\( B \leq_\F A \) for every \( B \in \boldsymbol{\Gamma} \). When \( \F \subseteq \W \), \( A \) is \( \F \)-complete for
 \( \boldsymbol{\Gamma} \) if and only if 
\( \boldsymbol{\Gamma} = \{ B \subseteq \pre{\omega}{\omega} \mid B \leq_\F A \} \); moreover, in this case 
\( \boldsymbol{\Gamma} \) is nonselfdual if and only if \( A \) is \( \F \)-nonselfdual. Examples of nonselfdual boldface
 pointclasses are the levels \( \boldsymbol{\Sigma}^0_\xi \) and \( \boldsymbol{\Pi}^0_\xi \) 
(for \( 1 \leq \xi < \omega_1 \)) of the classical stratification of the Borel subsets of \( \pre{\omega}{\omega} \).

\subsection*{Lipschitz games and determinacy axioms}

Given \( A,B \subseteq \pre{\omega}{\omega} \), the so-called \emph{Lipschitz game} \( G_\L(A,B) \) (with payoff sets \( A \) and \(B \)) is the two-player zero-sum infinite 
game in which the two players \( \pI \) and \( \pII \) take turns in playing natural numbers, so that after \(\omega\)-many turns \( \pI \) will have 
enumerated a sequence \( a \in \pre{\omega}{\omega} \) and \( \pII \) will have enumerated a sequence \( b \in \pre{\omega}{\omega} \): the winning 
condition for \( \pII \) is then \( a \in A \iff b \in B \). 

A \emph{strategy for player \( \pI \)} is simply a function \( \sigma \colon \pre{< \omega}{\omega} \to \omega \), and for every
 \( y \in \pre{\omega}{\omega} \) we denote by \( \sigma * y \) the \( \omega \)-sequence enumerated by \( \pI \) in a play of \( G_\L \) in
 which \( \pII \) enumerates \( y \) and \( \pI \) follows \( \sigma \), i.e.\ \( \sigma*y = \langle \sigma(y \restriction n) \mid n \in \omega \rangle \). 
Similarly, a \emph{strategy for \( \pII \)} is a  function \( \tau \colon \pre{< \omega}{\omega} \setminus \{ \emptyset \} \to \omega \), and 
for every \( x \in \pre{\omega}{\omega} \) we set \( x * \tau = \langle \tau(x \restriction n+1) \mid n \in \omega  \rangle \). 

Let \( A,B \subseteq \pre{\omega}{\omega} \): a \emph{winning strategy for \( \pII \)   in the game \( G_\L(A,B) \)} is a strategy \(\tau\) 
for \( \pII \) such that \( x\in A \iff x * \tau \in B \) for every \( x \in \pre{\omega}{\omega} \), and similarly we can 
define \emph{winning strategies for \( \pI \) in \( G_\L(A,B) \)}. Notice that given \(A,B \subseteq \pre{\omega}{\omega} \), at most 
one of  \( \pI \) and \( \pII \) has a winning strategy in \( G_\L(A,B) \). We say that \( G_\L(A,B) \) is \emph{determined} if at least one 
(and, by the comment above, only one) of the players \( \pI \) and \( \pII \) has a winning strategy in \( G_\L(A,B) \).

The next folklore result shows the relationship between the Lipschitz game and the reducibility preorders \( \leq_\c \) and \( \leq_\L \). We fully reprove it here for the reader's convenience.

\begin{proposition}[Folklore] \label{prop:folklore}
Let \( A,B \subseteq \pre{\omega}{\omega} \).
\begin{enumerate}[(1)]
\item
\( A \leq_\c B \iff {{A = B} \vee {\pI \text{ wins } G_\L(\neg B,A)}} \). In fact, if \( \pI \) wins \( G_\L(\neg B,A ) \) then \( A = f^{-1}(B) \) for some \( f \in \c \);
\item
\( A \leq_\L B \iff \pII \text{ wins } G_\L(A,B) \).
\end{enumerate}
\end{proposition}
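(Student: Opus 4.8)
The plan is to translate everything into the combinatorial description of nonexpansive functions and contractions afforded by the standard ultrametric \( d \), and then read off both equivalences by unwinding the winning conditions.

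First I would record the following dictionary. Since \( d(x,y) \leq 2^{-n} \) is the same as \( x \restriction n = y \restriction n \), a function \( f \) is nonexpansive exactly when \( x \restriction n = y \restriction n \Rightarrow f(x) \restriction n = f(y) \restriction n \) for all \( n \); equivalently, \( f(x)(n) \) depends only on \( x \restriction (n+1) \). Moreover, since on \( \pre{\omega}{\omega} \) all nonzero distances are powers of \( 2 \), being a contraction (Lipschitz with some \( L < 1 \)) is equivalent to being Lipschitz with constant \( \tfrac{1}{2} \), i.e.\ to \( x \restriction n = y \restriction n \Rightarrow f(x) \restriction (n+1) = f(y) \restriction (n+1) \); equivalently, \( f(x)(n) \) depends only on \( x \restriction n \), so in particular \( f(x)(0) \) is constant. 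This yields the exact match with strategies: for a strategy \( \tau \) of \( \pII \) the map \( x \mapsto x * \tau \) is nonexpansive, since \( (x * \tau)(n) = \tau(x \restriction (n+1)) \) depends only on \( x \restriction (n+1) \), and conversely every nonexpansive \( f \) arises this way via \( \tau(s) = f(x)(\leng(s) - 1) \) for any \( x \supseteq s \); symmetrically, for a strategy \( \sigma \) of \( \pI \) the map \( y \mapsto \sigma * y \) is a contraction, since \( (\sigma * y)(n) = \sigma(y \restriction n) \) depends only on \( y \restriction n \) (here \( \sigma(\emptyset) \) being constant corresponds exactly to \( \pI \) moving first), and every contraction arises this way.

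With this dictionary part (2) is immediate: \( \pII \) wins \( G_\L(A,B) \) means there is \( \tau \) with \( x \in A \iff x * \tau \in B \) for all \( x \), which by the dictionary is the same as the existence of a nonexpansive \( f \) with \( A = f^{-1}(B) \), i.e.\ \( A \leq_\L B \). For part (1) the only extra care lies in unwinding the winning condition: in \( G_\L(\neg B, A) \) player \( \pI \) enumerates a sequence \( a \) tested against \( \neg B \) and \( \pII \) a sequence \( b \) tested against \( A \), so \( \pI \) wins the run exactly when \( \neg(a \in \neg B \iff b \in A) \), which simplifies to \( a \in B \iff b \in A \). Thus \( \pI \) wins \( G_\L(\neg B, A) \) iff there is \( \sigma \) with \( \sigma * y \in B \iff y \in A \) for all \( y \), iff (by the dictionary) there is a contraction \( f \) with \( y \in A \iff f(y) \in B \), i.e.\ \( A = f^{-1}(B) \) for some \( f \in \c \). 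Adding the degenerate clause \( A = B \) from the definition of \( \leq_\c \) yields the stated equivalence, and the explicit witness \( f = (y \mapsto \sigma * y) \) proves the ``in fact'' addendum.

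The content here is entirely in the dictionary of the first paragraph; once it is in place both equivalences are formal. The step most likely to require care is the contraction side: one must verify that on this particular ultrametric an arbitrary contraction constant \( L < 1 \) forces the sharp one-step gain \( f(x) \restriction (n+1) = f(y) \restriction (n+1) \), and in particular that \( f(x)(0) \) is constant — this is precisely the asymmetry (``\( \pI \) commits the first move before seeing \( \pII \)'') that makes contractions, rather than nonexpansive maps, the functions computed by strategies for \( \pI \).
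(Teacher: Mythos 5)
Your proof is correct and follows essentially the same route as the paper's: both arguments rest on the observation that, because all nonzero \( d \)-distances are powers of \( 2 \), a contraction is automatically \( \tfrac{1}{2} \)-Lipschitz (so \( f(x)(n) \) depends only on \( x \restriction n \), matching strategies for \( \pI \)) while a nonexpansive map has \( f(x)(n) \) depending only on \( x \restriction (n+1) \) (matching strategies for \( \pII \)). Your ``dictionary'' is just a pointwise rephrasing of the paper's statement that \( f(\Nbhd_s) \subseteq \Nbhd_{t_s} \) for a unique \( t_s \) of the appropriate length, and your unwinding of the winning condition in \( G_\L(\neg B, A) \) is the same computation the paper leaves implicit.
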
%

\begin{proof} 
(1) Assume first that  \( A \leq_\c B \) and \( A \neq B \), so that there is \( f \in \c \) such that \( A = f^{-1}(B) \).  
Notice that since \( d(f(x),f(y)) \leq \frac{1}{2} d(x,y) \) for all \( x,y \in \pre{\omega}{\omega} \) (because all 
nonzero distances used by \( d \) are of the form \( 2^{-n} \) for some \( n \in \omega \)), for every \( s \in \pre{< \omega}{\omega}\) 
there is a unique \( t_s \in \pre{< \omega}{\omega} \) such that \( \leng(t_s) = \leng(s) + 1 \) and 
\(f(\Nbhd_s) \subseteq \Nbhd_{t_s} \). Define the strategy \(\sigma\) for \( \pI \) by setting
 \( \sigma(s) = t_s(\leng(s)) \) for every  \( s \in \pre{< \omega}{\omega} \): it is easy to check that such a 
strategy is winning in \( G_\L(\neg B,A) \).

Conversely, if \(\sigma\) is a winning strategy for \( \pI \), then the map 
\( f \colon \pre{\omega}{\omega} \to \pre{\omega}{\omega} \mapsto y \mapsto \sigma*y \) is easily 
seen to be a contraction, and \( y \in A \iff f(y) \notin \neg B \iff f(y) \in B \), whence \( A = f^{-1}(B) \).

(2) The proof is similar to that of (1). If \( f \in \L \) witnesses \( A \leq_\L B \) then for every 
\( s \in \pre{< \omega}{\omega} \) there is a unique \( t_s \in \pre{< \omega}{\omega} \) such that \( \leng(t_s) =\leng(s) \) 
and \(f(\Nbhd_s) \subseteq \Nbhd_{t_s} \): setting \( \tau(s) = t_s(\leng(s)-1) \) for each 
\( s \in \pre{< \omega}{\omega} \setminus \{ \emptyset \} \), we get that \( \tau \) is a winning strategy 
for \( \pII \) in \( G_\L(A,B) \). Conversely, if \(\tau\) is a winning strategy for \( \pII \) in \( G_\L(A,B) \), then 
the map \( x \mapsto x * \tau \) is nonexpansive and witnesses \( A \leq_\L  B \).
\end{proof}

Since Lipschitz games can straightforwardly be coded as classical Gale-Stewart games on \(\omega\), the full \( \AD \) implies%
\footnote{Actually, it was conjectured by Solovay that if \( \ZF + \mathsf{V = L(\RR)} \) then \( \AD^\L \Rightarrow \AD \): however, this is still a major open problem in this area.}
the \emph{Axiom of Determincy for Lipschitz games}
\begin{equation}\tag{$\AD^\L$}
\forall A,B \subseteq \pre{\omega}{\omega}\, (G_\L(A,B) \text{ is determined}).
\end{equation}
It immediately follows from Proposition~\ref{prop:folklore} that \( \AD^\L \) is equivalent to the following \emph{Strong Semi-linear Ordering Principle for \( \L \)}
\begin{equation}\tag{$\mathsf{SSLO}^\L$} \label{eq:SLOLstrong}
\forall A,B \subseteq \pre{\omega}{\omega} \, ( {A \leq_\L B } \vee { \neg B \leq_\c A }).
\end{equation}

In particular, since \( \c \subseteq \L \) the principle \( \mathsf{SSLO}^\L \) implies the so-called \emph{Semi-Linear Ordering principle for \( \L \)}, i.e.\ the statement:
\begin{equation} \tag{$\SLO^\L$}
\forall A,B \subseteq \pre{\omega}{\omega} \, (A \leq_\L B \vee \neg B \leq_\L A).
\end{equation}
Actually, by~\cite[Theorem 1]{Andretta:2003} we get that \( \mathsf{SSLO^\L} \) is equivalent to \(  \SLO^\L \) when assuming \(\ZF + \DC(\RR) + \BP\), where \( \BP \) is the statement: ``every \( A \subseteq \pre{\omega}{\omega} \) has the Baire property''.

It is a consequence of \( \SLO^\L \) that if \( \boldsymbol{\Gamma} \) is a nonselfdual boldface pointclass, then every proper \( \boldsymbol{\Gamma} \) set is \( \leq_\L \)-complete for \( \boldsymbol{\Gamma} \) (the converse is always true). Using Proposition~\ref{prop:folklore}, this can be strengthened by replacing nonexpansive functions with contractions.

\begin{corollary}[\( \AD^\L \)] \label{cor4}
Let \( \boldsymbol{\Gamma} \) be a nonselfdual boldface pointclass. Then for every proper \( \boldsymbol{\Gamma} \) set  \( A \) and every \( B \in \boldsymbol{\Gamma} \) we have \( B = f^{-1}(A) \) for some \emph{contraction }\( f\). In particular, a set \( A \subseteq \pre{\omega}{\omega} \) is \( \leq_\c \)-complete for \( \boldsymbol{\Gamma} \) if and only if \( A \in \boldsymbol{\Gamma} \setminus \check{\boldsymbol{\Gamma}} \).
\end{corollary}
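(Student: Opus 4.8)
The plan is to reduce the whole statement to a single Lipschitz game and to exploit the asymmetry recorded in Proposition~\ref{prop:folklore}, where it is precisely a win for \( \pI \) that produces a genuine \emph{contraction}. To establish the first assertion I would fix a proper \( \boldsymbol{\Gamma} \) set \( A \) and an arbitrary \( B \in \boldsymbol{\Gamma} \), and consider the game \( G_\L(\neg A, B) \). By \( \AD^\L \) this game is determined, so it suffices to rule out a win for \( \pII \): once \( \pI \) is shown to win, part~(1) of Proposition~\ref{prop:folklore}, read with payoffs \( \neg A \) and \( B \) (so that \( {B \leq_\c A} \iff {B = A} \vee {\pI \text{ wins } G_\L(\neg A, B)} \), and a win for \( \pI \) yields \( B = f^{-1}(A) \) for some \( f \in \c \)), immediately provides the required contraction.

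To exclude a win for \( \pII \) I would argue by contradiction. If \( \pII \) wins \( G_\L(\neg A, B) \), then by part~(2) of Proposition~\ref{prop:folklore} we have \( \neg A \leq_\L B \). Since \( \L \subseteq \W \), the relation \( \leq_\L \) refines \( \leq_\W \); as \( \boldsymbol{\Gamma} \) is closed under continuous preimages and \( B \in \boldsymbol{\Gamma} \), this forces \( \neg A \in \boldsymbol{\Gamma} \), i.e.\ \( A \in \check{\boldsymbol{\Gamma}} \). This contradicts the hypothesis that \( A \) is \emph{properly} in \( \boldsymbol{\Gamma} \), namely \( A \in \boldsymbol{\Gamma} \setminus \check{\boldsymbol{\Gamma}} \). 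Hence \( \pII \) cannot win, so \( \pI \) wins and the contraction exists. I would stress that this yields a genuine contraction in every case, including \( B = A \), because the conclusion comes from \( \pI \)'s win and not from the trivial clause of \( \leq_\c \).

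For the ``in particular'' equivalence, the forward implication is immediate from the first part: if \( A \in \boldsymbol{\Gamma} \setminus \check{\boldsymbol{\Gamma}} \), then \( A \in \boldsymbol{\Gamma} \) and every \( B \in \boldsymbol{\Gamma} \) satisfies \( B \leq_\c A \), which is exactly \( \leq_\c \)-completeness for \( \boldsymbol{\Gamma} \). For the converse I would assume \( A \) is \( \leq_\c \)-complete for \( \boldsymbol{\Gamma} \) but, toward a contradiction, that \( A \in \check{\boldsymbol{\Gamma}} \) as well. Then \( \neg A \in \boldsymbol{\Gamma} \), so completeness gives \( \neg A \leq_\c A \); applying the \( \c \)-nonselfduality of Lemma~\ref{lemma:banach} to the set \( \neg A \) yields \( \neg A \nleq_\c \neg(\neg A) = A \), a contradiction. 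Thus \( A \in \boldsymbol{\Gamma} \setminus \check{\boldsymbol{\Gamma}} \).

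The whole argument is short once the correct game is selected; the only genuine subtlety---and the step I would be most careful about---is the bookkeeping in Proposition~\ref{prop:folklore}, keeping straight that it is a win for \( \pI \) (not \( \pII \)) that delivers a contraction, and hence that one must play \( G_\L(\neg A, B) \) with exactly these payoffs and this player assignment, rather than the superficially similar games \( G_\L(A, B) \) or \( G_\L(\neg B, A) \).
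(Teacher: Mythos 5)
Your proof is correct and follows essentially the same route as the paper: the game \( G_\L(\neg A, B) \) you analyze is literally the game whose determinacy underlies the paper's appeal to \( \mathsf{SSLO}^\L \) for the pair \( (A,\neg B) \), and both arguments rule out a win for \( \pII \) by noting that \( \neg A \leq_\L B \) would put \( A \) in \( \check{\boldsymbol{\Gamma}} \), then extract the contraction from \( \pI \)'s winning strategy via Proposition~\ref{prop:folklore}(1). Your handling of the ``in particular'' clause via Lemma~\ref{lemma:banach} applied to \( \neg A \) also matches the intended argument.
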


\begin{proof}
Let \(A,B \subseteq \pre{\omega}{\omega} \) be distinct sets in \( \boldsymbol{\Gamma} \).
If there is no contraction \( f \) such that \(B =  f^{-1}( A )\),  then \( A \leq_\L \neg B \) by \( \mathsf{SSLO}^\L \) (which is equivalent to \( \AD^\L \)). Since nonexpansive functions are continuous and \( \check{\boldsymbol{\Gamma}} \) is a boldface pointclass, this implies that \( A \) is not a proper \( \boldsymbol{\Gamma} \) set.
\end{proof}

\subsection*{Classical degree-hierarchies}

The \( \L \)-hierarchy and the \( \W \)-hierarchy are the prototype for the degree-structures induced by each of
 the \( \F \)'s mentioned in the introduction. They can be described as follows (for a full proof of Theorem~\ref{th:Lhierarchy}, see e.g.\ \cite{Andretta:2007}).

\begin{theorem}[\( \AD^\L + \BP \)] \label{th:Lhierarchy} 
\begin{enumerate}[(1)]
\item 
\( \leq_\L \) (and hence also the induced partial order on \( \L \)-degrees) is well-founded;
\item 
\( \SLO^\L \) holds, and thus each level of the \( \L \)-hierarchy contains either a single \( \L \)-selfdual degree or an \( \L \)-nonselfdual pair;
\item
at the bottom of the \( \L \)-hierarchy there is the \( \L \)-nonselfdual pair consisting of  \( [\pre{\omega}{\omega}]_\L = \{ \pre{\omega}{\omega} \} \) and \( [\emptyset]_\L = \{ \emptyset \} \);
\item 
successor levels and limit levels of countable cofinality are occupied by a single \( \L \)-selfdual degree;
\item 
at limit levels of uncountable cofinality there is an \( \L \)-nonselfdual pair. 
\end{enumerate}
\end{theorem}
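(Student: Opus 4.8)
\emph{Parts (2) and (3) are quick.} For (2), recall that \( \AD^\L \) is equivalent to \( \mathsf{SSLO}^\L \), and since \( \c \subseteq \L \) this gives \( \SLO^\L \) at once. The plan for the ``almost linearity'' is then the standard antichain argument: given \( A \nleq_\L B \) and \( B \nleq_\L A \), \( \SLO^\L \) yields \( \neg B \le_\L A \) and \( \neg A \le_\L B \); complementing the first reduction turns it into \( B \le_\L \neg A \), whence \( B \equiv_\L \neg A \). Thus any two incomparable degrees form a dual pair, an \( \L \)-selfdual degree is comparable with everything, and each level is either a single selfdual degree or an \( \L \)-nonselfdual pair. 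For (3) I would just compute the two trivial degrees: a constant function is nonexpansive, so \( \pre{\omega}{\omega} \le_\L B \) for every \( B \neq \emptyset \) and \( \emptyset \le_\L B \) for every \( B \neq \pre{\omega}{\omega} \), while \( A \le_\L \pre{\omega}{\omega} \) forces \( A = \pre{\omega}{\omega} \) (the preimage of the whole space is the whole space) and dually for \( \emptyset \); hence \( \{ \pre{\omega}{\omega}, \emptyset \} \) is a nonselfdual pair lying below everything.

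\emph{The genuine obstacle is the well-foundedness in (1)}, and here the plan is the Martin--Monk argument, which is exactly where \( \DC(\RR) \) and \( \BP \) enter. Suppose toward a contradiction that \( A_0 >_\L A_1 >_\L \cdots \). From \( A_n \nleq_\L A_{n+1} \) and \( \SLO^\L \) I extract nonexpansive reductions witnessing \( \neg A_{n+1} \le_\L A_n \), that is, winning strategies for \( \pII \) in the corresponding Lipschitz games, each of which \emph{flips} membership from level \( n+1 \) to level \( n \). Using \( \DC(\RR) \) I would splice these strategies along the branches of \( 2^\omega \) into a continuous assignment \( \epsilon \mapsto H(\epsilon) \in \pre{\omega}{\omega} \) for which membership of \( H(\epsilon) \) in \( A_0 \) is forced to flip whenever a single bit of \( \epsilon \) is changed; the resulting ``flip set'' \( \{ \epsilon : H(\epsilon) \in A_0 \} \subseteq 2^\omega \) cannot have the Baire property, contradicting \( \BP \). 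I expect the most delicate point to be the simultaneous diagonalization of the strategies, making the flip propagate all the way down to level \( 0 \).

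\emph{The engine for the fine structure (4)--(5) is a selfduality criterion:} a nontrivial \( A \) is \( \L \)-selfdual if and only if \( A \equiv_\L \bigoplus_n A_n \) for some \( A_n <_\L A \). For the forward direction I would use the canonical decomposition \( A = \bigoplus_n A_{\lfloor n \rfloor} \): the maps \( y \mapsto n {}^\smallfrown y \) are contractions witnessing \( A_{\lfloor n \rfloor} \le_\c A \), so if some \( A_{\lfloor n \rfloor} \equiv_\L A \), then feeding \( A \le_\L A_{\lfloor n \rfloor} \) together with the selfduality \( A \le_\L \neg A \) into \eqref{eq:composition} produces a genuine contraction reduction \( A \le_\c \neg A \), contradicting \( \c \)-nonselfduality (Lemma~\ref{lemma:banach}); hence every \( A_{\lfloor n \rfloor} <_\L A \). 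For the converse, given \( A = \bigoplus_n A_n \) with \( A_n <_\L A \), determinacy of \( G_\L(A, A_n) \) and \( A \nleq_\L A_n \) force \( \pI \) to win, so by Proposition~\ref{prop:folklore} there are contractions \( g_n \) with \( A_n = g_n^{-1}(\neg A) \); then \( n {}^\smallfrown y \mapsto g_n(y) \) is \emph{nonexpansive} (the contraction factor \( \tfrac{1}{2} \) compensates exactly for reading off the first coordinate) and witnesses \( A \le_\L \neg A \).

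\emph{With the criterion in hand, (4) and (5) reduce to a cofinality count} on the well-founded, almost-linear structure from (1)--(2). Ranking the degrees (collapsing each nonselfdual pair to a point), I would argue: at a successor level the predecessors have a top degree \( B \), and the self-join \( \bigoplus_n B \) is the immediate successor, hence a countable join of strictly lower degrees and so selfdual; at a limit of countable cofinality I join a cofinal \( \omega \)-sequence \( B_0 <_\L B_1 <_\L \cdots \) to land on the single selfdual degree \( \bigoplus_n B_n \); at a limit \( \lambda \) of uncountable cofinality any selfdual \( A \) there would, by the criterion, be a countable join \( \bigoplus_n A_n \), and monotonicity of \( \bigoplus \) together with \( \mathrm{cf}(\lambda) > \omega \) would push \( A \) below a degree of rank \( < \lambda \), a contradiction, so the level must be a nonselfdual pair. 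The remaining technical point I would need to pin down is the \emph{immediate} successor claim (the strictness \( B <_\L \bigoplus_n B \) and the absence of degrees strictly between), which calls for a direct Lipschitz-game analysis exploiting that a coordinate cannot be deleted nonexpansively; this is precisely what makes the \( \L \)-hierarchy longer than the Wadge hierarchy.
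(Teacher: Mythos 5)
The paper does not actually prove Theorem~\ref{th:Lhierarchy}: it quotes it as the classical description of the Lipschitz hierarchy and refers to \cite{Andretta:2007} for a full proof. Your sketch is essentially that standard proof, and most of it is sound. Parts (2) and (3) are correct as written. The selfduality criterion you use as the engine for (4)--(5) --- a nontrivial \( A \) is \( \L \)-selfdual iff \( A \equiv_\L \bigoplus_n A_n \) with all \( A_n <_\L A \) --- is exactly the standard route, and both directions of your argument for it are correct. The ``no intermediate degrees'' point you defer at the end does not need a separate game analysis: it is handled by the same mechanism you already invoke, namely that whenever \( B_n <_\L C \) (or \( B_n \equiv_\L C \) with \( C \) nonselfdual) determinacy upgrades the reduction to a genuine contraction \( g_n \) with \( B_n = g_n^{-1}(C) \), and the glued map \( n {}^\smallfrown{} x \mapsto g_n(x) \) is nonexpansive; hence \( \bigoplus_n B_n \) is the \emph{least} upper bound of the \( B_n \), which settles both the successor case and the countable-cofinality case, and the uncountable-cofinality case follows by applying this to a set of rank \( \sup_n \mathrm{rank}(A_{\lfloor n \rfloor}) + 1 < \lambda \).

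The one place where the sketch, as written, would not go through is the Martin--Monk argument for (1). You propose to splice winning strategies for player \( \pII \), i.e.\ nonexpansive maps from level \( n+1 \) to level \( n \). But by the paper's conventions a \( \pII \)-strategy computes the \( k \)-th digit of its output only after reading the first \( k+1 \) digits of its input (\( (x * \tau)(k) = \tau(x \restriction k+1) \)), so the infinite backward composition \( x_0 = \tau_0(\tau_1(\tau_2(\cdots))) \) never determines even the first digit of \( x_0 \): there is no seed at infinity and \( H(\epsilon) \) is not defined. The construction requires winning strategies for player \( \pI \), i.e.\ \emph{contractions}, whose \( k \)-th output digit depends only on the first \( k \) input digits, so that the unwinding bottoms out after finitely many levels. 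Concretely one needs, for each \( n \), both a flipping contraction (from \( \pI \) winning \( G_\L(A_n, A_{n+1}) \), which follows from \( A_n \nleq_\L A_{n+1} \)) and a non-flipping one (from \( \pI \) winning \( G_\L(\neg A_n, A_{n+1}) \), which requires \( \neg A_n \nleq_\L A_{n+1} \) and is \emph{not} automatic); the usual remedy is to first replace each \( A_n \) by \( A_n \oplus \neg A_n \) and check that the chain remains strictly decreasing. With that correction the flip-set/\( \BP \) contradiction goes through exactly as you describe.
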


Therefore in the \( \L \)-hierarchy we have an alternation of \( \L \)-nonselfdual pairs with \( \omega_1 \)-blocks of consecutive \( \L \)-selfdual degrees, with \( \L \)-selfdual degrees (followed by an \(\omega_1 \)-block as above) at limit levels of countable cofinality and \( \L \)-nonselfdual pairs at limit levels of uncountable cofinality (see Figure~\ref{fig:Lhierarchy}).

\begin{figure}[!htbp]
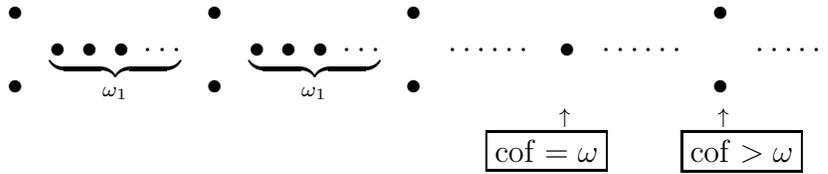

 \centering
\[
\begin{array}{llllllllll}
\bullet & & \bullet & & \bullet & & & &\bullet
\\
& \smash[b]{\underbrace{\bullet \; \bullet \;
\bullet \; \cdots}_{\omega_1}} & &
\smash[b]{\underbrace{\bullet \; \bullet \; \bullet \;
\cdots}_{\omega_1}} & & \cdots\cdots &
\bullet & \cdots\cdots & & \cdots\cdots
\\
\bullet & & \bullet & & \bullet & & & & \bullet
\\
& & & & & &
\stackrel{\uparrow}{\makebox[0pt][r]{\framebox{${\rm cof} = \omega$}}} & &
\, \stackrel{\uparrow}{\makebox[0pt][l]{\framebox{${\rm cof} > \omega$}}}
\end{array}
\]
 \caption{The \( \L \)-hierarchy (bullets represent \( \L \)-degrees).}
 \label{fig:Lhierarchy}
\end{figure}

By the Steel-Van Wesep theorem~\cite[Theorem 3.1]{VanWesep:1978}, under \( \AD^\L + \BP \) one gets that the \( \W \)-hierarchy is obtained 
from the \( \L \)-hierarchy by gluing together each \( \omega_1 \)-block of consecutive \( \L \)-selfdual degrees into a single \( \W \)-selfdual degree, so
 that \( \leq_\W \) is well-founded, at each level of the \( \W \)-hierarchy there is either a single \( \W \)-selfdual degree or a \( \W \)-nonselfdual pair, 
\( \W \)-selfdual degrees coincide exactly with the collapsings of maximal \( \omega_1 \)-blocks of consecutive \( \L \)-selfdual degrees, and 
\( \W \)-nonselfdual pairs coincide exactly with \( \L \)-nonselfdual pairs.
It follows that \( \W \)-nonselfdual pairs alternate with single \( \W \)-selfdual degrees, with \( \W \)-selfdual degrees (followed by a 
\( \W \)-nonselfdual pair) at limit levels of countable cofinality and \( \W \)-nonseldfual pairs at limit levels of uncountable cofinality. The first 
\( \W \)-selfdual degree consists of all nontrivial clopen sets, while the first nontrivial \( \W \)-nonselfdual pair consists of all proper open and proper closed sets
 (Figure~\ref{fig:Whierarchy}).

\begin{figure}[!htbp]
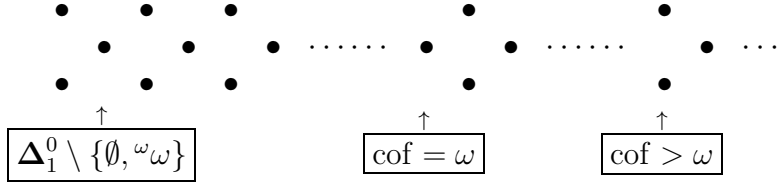

 \centering
\[
\begin{array}{llllllllllllll}
\bullet & & \bullet & & \bullet & & & & \bullet & & & \bullet
\\
& \bullet & & \bullet & & \bullet & \cdots \cdots & \bullet
& & \bullet &\cdots\cdots & & \bullet & \cdots
\\
\bullet & & \bullet & & \bullet & & & & \bullet & & & \bullet
\\
& \, \makebox[0pt]{$\stackrel{\uparrow}{\framebox{$\boldsymbol{\Delta}^0_1 \setminus
\{\emptyset,\pre{\omega}{\omega}\}$}}$}& & & & & &
\, \makebox[0pt]{$\stackrel{\uparrow}{\framebox{${\rm cof} = \omega$}}$}
& & & &
\, \makebox[0pt]{$\stackrel{\uparrow}{\framebox{${\rm cof} > \omega$}}$}
\end{array}
\]
 \caption{The \( \W \)-hierarchy (bullets represent \( \W \)-degrees).}
 \label{fig:Whierarchy}
\end{figure}

Collecting together various easy observations,  
it is possible to show that every collection of functions \( \F \) closed under composition and containing \( \id \) induces a degree-structure very close to the \( \L \)- and the \( \W \)-hierarchy as long as \( \F \supseteq \L \): in fact, the next theorem%
\footnote{In this generality, Theorem~\ref{th:Fhierarchy} first appeared in~\cite{MottoRos:2009}. However, restricted forms of it (in which only the so-called \emph{amenable} collections of functions \( \F \) were considered) already appeared in~\cite{Andretta:2003d,Andretta:2006}.}
 leaves open only the problem of determining what happens after an \( \F \)-selfdual degree, and what happens at limit levels of uncountable cofinality (for a proof of Theorem~\ref{th:Fhierarchy}, see~\cite[Theorem 3.1]{MottoRos:2009}).

\begin{theorem}[\( \AD^\L + \BP \)] \label{th:Fhierarchy} 
Let \( \F \) be a set of functions from \( \pre{\omega}{\omega} \) into itself which is closed under composition and contains \( \id \).
If \( \F \supseteq \L \), then
\begin{enumerate}[(1)]
\item 
\( \leq_\F \) (and hence also the partial order induced on the \( \F \)-degrees) is  well-founded;
\item 
the \emph{Semi-Linear Ordering principle for \( \F \)}
\begin{equation}\tag{$\SLO^\F$}
\forall A,B \subseteq \pre{\omega}{\omega} \, ({A \leq_\F B } \vee {\neg B \leq_\F A})
\end{equation}
is satisfied, and thus
each level of the \( \F \)-hierarchy contains either a single \( \F \)-selfdual degree or an \( \F \)-nonselfdual pair;
\item
the first level of the \( \F \)-hierarchy is occupied by the \( \F \)-nonselfdual pair consisting of \( [\pre{\omega}{\omega}]_\F = \{ \pre{\omega}{\omega} \} \) and \( [\emptyset]_\F = \{ \emptyset \} \);
\item 
after an \( \F \)-nonselfdual pair and at limit levels of countable cofinality there is always a single \( \F \)-selfdual degree;
\item 
if \( A \subseteq \pre{\omega}{\omega} \) is \( \F \)-nonselfdual, then \( [A]_\F = [A]_\L \) (in particular, \( A \) is also \( \L \)-nonselfdual).
\end{enumerate}
\end{theorem}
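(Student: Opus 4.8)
The plan is to transfer everything from the fully-understood $\L$-hierarchy (Theorem~\ref{th:Lhierarchy}) to the coarser $\F$-hierarchy, exploiting three elementary facts that hold because $\L \subseteq \F$ and because $\F$-reductions are taken via preimages. First, $\leq_\L$ refines $\leq_\F$: if $A \leq_\L B$ then $A \leq_\F B$, and contrapositively $A \nleq_\F B \Rightarrow A \nleq_\L B$. Second, complementation commutes with every such reducibility, since $A = f^{-1}(B)$ forces $\neg A = f^{-1}(\neg B)$; hence $A \leq_\F B \iff \neg A \leq_\F \neg B$, and likewise for $\leq_\L$. Third, selfduality can only increase under coarsening: $A \leq_\L \neg A \Rightarrow A \leq_\F \neg A$, so every $\L$-selfdual set is $\F$-selfdual.

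For (2) I would first note that $\SLO^\F$ is immediate from $\SLO^\L$: for any $A,B$ the $\L$-dichotomy $A \leq_\L B \vee \neg B \leq_\L A$ transfers verbatim to $\leq_\F$ by refinement. The ``single selfdual degree or nonselfdual pair at each level'' clause is then the usual Wadge-lemma computation: if $[A]_\F$ and $[B]_\F$ are incomparable, applying $\SLO^\F$ to $(A,B)$ and to $(B,A)$ and using $A \nleq_\F B$, $B \nleq_\F A$ yields $\neg B \leq_\F A$ and $\neg A \leq_\F B$; complement-commutation turns these into $B \equiv_\F \neg A$, so the only degree incomparable with $[A]_\F$ is $[\neg A]_\F$, and this pair is automatically $\F$-nonselfdual. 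For (3), constant maps are nonexpansive and hence lie in $\L \subseteq \F$, so $\pre{\omega}{\omega} \leq_\F A \iff A \neq \emptyset$ and $\emptyset \leq_\F A \iff A \neq \pre{\omega}{\omega}$, while $A \leq_\F \pre{\omega}{\omega} \iff A = \pre{\omega}{\omega}$; this pins down $[\pre{\omega}{\omega}]_\F = \{ \pre{\omega}{\omega} \}$ and $[\emptyset]_\F = \{ \emptyset \}$ as a nonselfdual pair lying below every other degree.

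The heart of the argument is (5), from which the finer structural claims follow. Since $\L \subseteq \F$ gives $[A]_\L \subseteq [A]_\F$ for free, the content is the reverse inclusion. Assume $A$ is $\F$-nonselfdual and $B \equiv_\F A$; then $B$ is $\F$-nonselfdual as well. Applying $\SLO^\L$ to $(A,B)$ and to $(B,A)$ leaves four combinations, and I would rule out three of them using $A \leq_\F B$, $B \leq_\F A$, and the refinement: each of the disjuncts $\neg B \leq_\L A$ and $\neg A \leq_\L B$ sandwiches some $\neg X$ below $X$ (e.g.\ $\neg B \leq_\L A \leq_\F B$ yields $\neg B \leq_\F B$), forcing $A$ or $B$ to be $\F$-selfdual, a contradiction. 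The only surviving case is $A \leq_\L B \wedge B \leq_\L A$, i.e.\ $A \equiv_\L B$, giving $[A]_\F \subseteq [A]_\L$ and hence equality; the ``in particular'' is the special case $B = \neg A$.

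Finally, (1) and (4) are transfer statements. For well-foundedness I would argue by contradiction from a strict chain $A_0 >_\F A_1 >_\F \cdots$ and track the $\L$-ranks $\rho(A_n)$: since $A_n \nleq_\F A_{n+1}$ implies $A_n \nleq_\L A_{n+1}$, combining $\SLO^\L$ applied to $(A_{n+1},A_n)$ with the robust consequence $\neg A_{n+1} \leq_\L A_n$ forces at each step either $A_{n+1} <_\L A_n$ (so $\rho$ strictly drops) or $A_{n+1} \equiv_\L \neg A_n$ with $A_n$ being $\L$-nonselfdual (so $\rho$ stays constant, since nonselfdual pairs share a level in Theorem~\ref{th:Lhierarchy}); as ordinals cannot descend forever, the second case must hold cofinally, but then all sufficiently late $A_n$ lie in the two $\L$-degrees $[A_N]_\L$ and $[\neg A_N]_\L$, and pigeonhole produces $A_i \equiv_\L A_j$, hence $A_i \equiv_\F A_j$, for some $i<j$, contradicting strictness. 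For (4), the positions in question are occupied in the $\L$-hierarchy by a single $\L$-selfdual degree (Theorem~\ref{th:Lhierarchy}(4)), which is $\F$-selfdual by the monotonicity above; it then remains to check that the coarsening inserts no new $\F$-degree strictly between this degree and the pair or limit below it, and that it sits at the correct cofinality, which one verifies via $\SLO^\F$ and complement-commutation. I expect this last bookkeeping --- making the ``$\F$-hierarchy is a convex block-collapse of the $\L$-hierarchy'' picture precise, so that levels and their cofinalities correspond correctly --- to be the main obstacle, whereas (5) supplies the single conceptual ingredient that keeps the nonselfdual degrees from being disturbed.
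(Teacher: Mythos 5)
First, note that the paper does not prove this theorem itself: it is quoted from \cite{MottoRos:2009} (Theorem 3.1 there), so the comparison can only be with the general strategy of that reference, which is indeed the same as yours --- transfer everything from the $\L$-hierarchy of Theorem~\ref{th:Lhierarchy} using $\L \subseteq \F$, commutation of reductions with complements, and $\SLO^\L$. Your arguments for (2), (3) and (5) are complete and correct, and your well-foundedness argument for (1) is also correct: case~A ($A_{n+1} <_\L A_n$) can occur only finitely often since the $\L$-rank never increases, so eventually $A_{n+1} \equiv_\L \neg A_n$ holds at every step, whence $A_N \equiv_\L A_{N+2}$ and therefore $A_N \leq_\F A_{N+2} \leq_\F A_{N+1}$, contradicting $A_N \nleq_\F A_{N+1}$.

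The genuine gap is part (4), which you explicitly leave as ``bookkeeping'' without carrying it out; as it stands that clause of the theorem is unproven. It does close, and more easily than you fear, because you never need to build joins inside $\F$ (which would require closure properties $\F$ is not assumed to have): you only need to produce the witnessing selfdual set inside $\L$ and push it up with $\L \subseteq \F$. For the successor case, let $\{[A]_\F,[\neg A]_\F\}$ be an $\F$-nonselfdual pair; by your (5) these are genuine $\L$-degrees, so Theorem~\ref{th:Lhierarchy}(4) provides an $\L$-selfdual $C$ whose $\L$-degree is the immediate successor of the pair. If $B >_\F A, \neg A$, then $B \nleq_\F A$ forces $B \nleq_\L A$, and $\SLO^\L$ gives $\neg A \leq_\L B$; symmetrically $A \leq_\L B$, so $B$ lies strictly $\L$-above the pair and hence $C \leq_\L B$, whence $C \leq_\F B$. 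Since by $\SLO^\F$ a selfdual degree is comparable with every degree, $[C]_\F$ is alone at its level. The limit case of countable cofinality is the same transfer, but there you must additionally verify that a strictly $\leq_\F$-increasing $\omega$-sequence cofinal below the level projects to a strictly increasing $\omega$-sequence of $\L$-levels (ruling out, via $\SLO^\F$, the possibility that consecutive $B_n$'s sit in the same $\L$-level as a nonselfdual pair), so that the corresponding $\L$-level is again a limit of countable cofinality and Theorem~\ref{th:Lhierarchy}(4) applies. Without some such argument the ``correct cofinality'' claim you defer is exactly the content of (4), so you should not treat it as routine; with it, your proof is complete and agrees in substance with the cited one.
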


As recalled above, all the reducibilities \( \F \) mentioned in the introduction satisfy the condition \( \F \supseteq \L \), and in fact, except for the case of the \( \boldsymbol{\Sigma}^1_{2n} \)-measurable functions considered in~\cite{MottoRos:2010b}, under suitable determinacy assumptions their induced degree-structure is either isomorphic to the \( \L \)-hierarchy or to the \( \W \)-hierarchy. More precisely:
\begin{enumerate}[(1)]
\item
\( (\Deg(\Lip), \leq ) \), \( (\Deg(\UCont),\leq) \), and \( (\Deg(\mathscr{B}_\gamma),\leq) \) (for \( 1 \leq \gamma < \omega_1 \) an additively closed ordinal) are all isomorphic to the \( \L \)-hierarchy (Figure~\ref{fig:Lhierarchy}). Moreover, \( (\Deg(\Lip), \leq ) = (\Deg(\UCont),\leq) \);
\item
\( (\Deg(\Bor),\leq) \) and \( (\Deg(\D_\alpha), \leq) \) (for every \( 1 \leq \alpha < \omega_1 \))  are all isomorphic to the \( \W \)-hierarchy (Figure~\ref{fig:Whierarchy}).
\end{enumerate}

\subsection*{A classification of the \( \F \)-hierarchies of degrees}

In~\cite{MottoRos:2012b} it was proposed a rough classification of \( \F \)-hierarchies according to whether they provide an acceptable measure of ``complexity'' for subsets of \( \pre{\omega}{\omega} \).%
\footnote{The two guiding principles for such a classification are the following: (1) 
the \( \F \)-hierarchy must be at least \emph{well-founded}, so that one can associate a rank function to it which measures how much complicated is a given \( \F \)-degree, and (2)
the shorter are the antichains, the better is the classification given by the \( \F \)-hierarchy (this is because it is arguably preferable to have as less as possible distinct \( \F \)-degrees on each of the levels).}

\begin{definition}\label{def:verygood}
Let \( \F \) be a collection of functions from \( \pre{\omega}{\omega} \) to itself which is closed under composition and contains \( \id \). The structure of the \( \F \)-degrees is called:
\begin{enumerate}[-]
\item
\textbf{\emph{very good}} if it is semi-well-ordered, i.e.\ it is well-founded and \( \SLO^\F \) holds;%
\footnote{Of course when we are interested in the restriction of the \( \F \)-hierarchy to some \( \Gamma \subseteq \pow(\pre{\omega}{\omega}) \), then we just require that \( \SLO^\F \) holds for \( A,B \in \Gamma \).} 
\item
\textbf{\emph{good}} if it is a well-quasi-order, i.e.\ it contains neither infinite descending chains nor infinite antichains;
\item
\textbf{\emph{bad}} if it contains infinite antichains;
\item
\textbf{\emph{very bad}} if it contains both infinite descending chains and infinite antichains.
\end{enumerate}
\end{definition}

According to this classification, under
  \( \AD^\L  + \BP \) all the \( \F \)-hierarchies considered above are thus very good, and in fact by Theorem~\ref{th:Fhierarchy} we get that
 \( \F \supseteq \L \) is a sufficient condition for having that the structure of the \( \F \)-degrees is very good. Albeit this is literally not a necessary condition (see the discussion in Section~\ref{sec:questions}), in Sections~\ref{sec:computable}--\ref{sec:changingmetric} we will show  that in many relevant cases if \( \F \not\supseteq \L \) then one gets a bad degree-structure, or even a very bad one.

\begin{remark}
Bad and very bad hierarchies of degrees have been considered in several papers~\cite{Hertling:1993,Hertling:1996, MottoRos:2012b, Ikegami:2012, Schlicht:2012}.  However, all these examples were obtained by considering Wadge-like reducibilites on topological spaces different from \( \pre{\omega}{\omega} \). To the best of our knowledge, the ones reported in the present paper are the first ``natural'' examples of hierarchies of degrees \emph{defined on the classical Baire space} which can be proven to be (very) bad, without any further set-theoretical assumption beyond our basic theory \( \ZF + \DC(\RR) \).
\end{remark}

\subsection*{Selfcontractible sets}

Fix a metric space \( X = (X,d) \).

\begin{definition} \label{defcontractible}
A set \( A \subseteq X \) is called \emph{selfcontractible} if there is a contraction \( f \colon X \to X \) such that \( f^{-1}(A) = A \).
\end{definition}

Notice that, in particular, if \( A \subseteq \pre{\omega}{\omega} \) is selfcontractible and \( B \in [A]_\L \), then \( B \) is selfcontractible as well by iii) of Remark~\ref{rmk:definitionfunctions}.

\begin{remark} \label{rmk:r-contractible}
In Definition~\ref{defcontractible} we could further require that the Lipschitz constant of \( f \) be bounded by some \( 0 < r < 1 \). 
More precisely, given \( 0 < r < 1 \) we could say that a set \( A \subseteq X \) is \emph{\( r \)-selfcontractible} if \( A = f^{-1}(A) \) for some 
\( f \colon X \to X \) such that \( d(f(x),f(y)) \leq r \cdot d(x,y) \) for every \( x,y \in X \). However, it is easy to check that \( A \subseteq X \) 
is selfcontractible if and only if it is \( r \)-selfcontractible \emph{for some \( 0 < r < 1 \)}, if and only if it is \( r \)-selfcontractible 
\emph{for all \( 0 < r < 1 \)}. (For the nontrivial direction, notice that if \( f \) witnesses that \( A \subseteq X \) is selfcontractible, 
then for every \( 0 < r < 1 \) there is \( n(r) \in \omega \) large enough so that \( f^{n(r)} = \underbrace{f \circ \dotsc \circ f}_{n(r)} \) witnesses 
that \( A \) is \( r \)-selfcontractible.) 
\end{remark}%

By the Banach fixed-point theorem, if \( (X,d ) \) is a nonempty complete metric space and \( f \colon X \to X \) is a contraction, then there is a 
(unique) fixed point \( x_f \in X \) for \( f \). From this classical result and~\eqref{eq:composition}, it easily follows that:

\begin{lemma} \label{lemma:banach}
Let \( X = (X,d) \) be a complete metric space. For every \( A \subseteq X \) there is no contraction \( f \colon X \to X \) such that \( f^{-1}(\neg A) 
= A \).

In particular, all sets  \( A \subseteq \pre{\omega}{\omega} \) are \( \c \)-nonselfdual (that is, \( A \nleq_\c \neg A \)), and if \( A \) is \( \L \)-selfdual then \( A \) is not selfcontractible. 
\end{lemma}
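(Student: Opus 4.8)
The plan is to derive the general statement directly from the Banach fixed-point theorem, and then to read off the two particular claims as short corollaries, the second one via~\eqref{eq:composition}.

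For the general statement I would argue by contradiction. Suppose some contraction \( f \colon X \to X \) satisfied \( f^{-1}(\neg A) = A \); we may harmlessly assume \( X \neq \emptyset \), since the intended applications are to \( X = \pre{\omega}{\omega} \) and the Banach theorem in any case requires nonemptiness. As \( (X,d) \) is then a nonempty complete metric space and \( f \) is a contraction, Banach supplies a fixed point \( x_f = f(x_f) \). It now suffices to test whether \( x_f \in A \): if \( x_f \in A = f^{-1}(\neg A) \) then \( f(x_f) \in \neg A \), i.e.\ \( x_f = f(x_f) \notin A \), a contradiction; and if \( x_f \notin A \) then \( x_f \notin f^{-1}(\neg A) \), so \( f(x_f) \in A \), i.e.\ \( x_f \in A \), again a contradiction. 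Either way the hypothesis is untenable, so no such \( f \) exists. The crux is simply that the fixed point is forced by the hypothesis to belong to \( A \) and to \( \neg A \) at once.

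For the \( \c \)-nonselfduality claim, recall that \( A \leq_\c \neg A \) means, by definition, that either \( A = \neg A \) or \( A = f^{-1}(\neg A) \) for some \( f \in \c \). Applying the general statement to the nonempty complete space \( (\pre{\omega}{\omega}, d) \) excludes the second alternative, while \( A = \neg A \) is impossible on any nonempty space; hence \( A \nleq_\c \neg A \). For the selfcontractibility claim, suppose \( A \) is \( \L \)-selfdual, so \( A \leq_\L \neg A \), and suppose toward a contradiction that \( A \) is selfcontractible, witnessed by a contraction \( f \) with \( f^{-1}(A) = A \). Reading~\eqref{eq:composition} with the contracted set taken to be \( A \) (so that its hypothesis \( A = f^{-1}(B) \) is our \( A = f^{-1}(A) \)), and then instantiating \( A' = A \) and \( B' = \neg A \), the two facts \( A \leq_\L A \) (via \( \id \)) and \( A \leq_\L \neg A \) yield \( A \leq_\c \neg A \), contradicting what we just proved. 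Equivalently and more concretely, if \( g \in \L \) witnesses \( A \leq_\L \neg A \) then \( g \circ f \) is again a contraction (since \( \c \) is closed under left composition with nonexpansive maps, by part~iii) of Remark~\ref{rmk:definitionfunctions}) and \( (g \circ f)^{-1}(\neg A) = f^{-1}(g^{-1}(\neg A)) = f^{-1}(A) = A \), which directly contradicts the general statement.

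I do not anticipate a serious obstacle: once the Banach fixed point is produced, the contradiction is immediate, and both corollaries are one-line reductions to it. The only points demanding a little care are the degenerate empty-space case (handled by restricting to \( X \neq \emptyset \)) and, in the definition of \( \leq_\c \), remembering to dispose of the \( A = \neg A \) alternative separately before invoking the fixed-point argument.
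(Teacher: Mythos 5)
Your argument is correct and is exactly the one the paper intends: the lemma is stated as an immediate consequence of the Banach fixed-point theorem (the fixed point would have to lie in both \( A \) and \( \neg A \)) together with~\eqref{eq:composition} for the selfcontractibility claim, and your handling of the definitional case \( A = \neg A \) and of the composition \( g \circ f \) matches that route. No issues.
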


Lemma~\ref{lemma:banach} shows that \( \L \)-nonselfduality is a necessary condition for \( A \subseteq \pre{\omega}{\omega} \) being selfcontractible: 
in Corollary~\ref{cor2} we will obtain a full characterization of selfcontractible subsets of \( \pre{\omega}{\omega} \) 
by showing that such a condition is also sufficient (under suitable determinacy assumptions). 

The next simple observation will not be used for the main results of this paper, but it is maybe an interesting fact to be noticed as it shows that each selfcontractible set can be shrunk to arbitrarily small subsets which maintain the same topological complexity.

\begin{proposition}
Let \( X = (X,d) \) be a complete metric space,
\( A \subseteq X \) be selfcontractible, and \( f \) be a witness of this fact. Then for every open neighborhood \( U \) of \( x_f \) 
there is a contraction \( g \colon X \to X \) such that \( A =  g^{-1}( A \cap U ) \). Moreover, if  \( U \) is clopen and \( A \neq X \), then \( A \equiv_\W A \cap U \).
\end{proposition}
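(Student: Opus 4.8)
The plan is to take $g$ to be a sufficiently high iterate $f^{n}$ of the witnessing contraction $f$, exploiting the fact that large iterates of a contraction push the whole space into any prescribed neighbourhood of the fixed point $x_f$. First I would record the two facts that make $f^{n}$ a natural candidate. Since $f^{-1}(A) = A$, an immediate induction gives $(f^{n})^{-1}(A) = A$ for every $n \in \omega$; and by the Banach fixed-point theorem (as used in Lemma~\ref{lemma:banach}) $f$ has a unique fixed point $x_f$, so that $f^{n}(x_f) = x_f$. Consequently, for any $n$,
\[
(f^{n})^{-1}(A \cap U) = (f^{n})^{-1}(A) \cap (f^{n})^{-1}(U) = A \cap (f^{n})^{-1}(U),
\]
so it suffices to pick $n$ large enough that $(f^{n})^{-1}(U) = X$, i.e.\ that $f^{n}$ maps the whole space inside $U$.

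The heart of the matter is thus the estimate $f^{n}(X) \subseteq U$ for large $n$. Fixing $\varepsilon > 0$ with $\{ y \mid d(x_f,y) < \varepsilon \} \subseteq U$ and letting $L < 1$ be a Lipschitz constant of $f$, the contraction property together with $f^{n}(x_f) = x_f$ yields $d(f^{n}(x), x_f) \leq L^{n} d(x, x_f) \leq L^{n} \operatorname{diam}(X)$ for every $x$. Since the standard metric on \( \pre{\omega}{\omega} \) is bounded by $1$, the right-hand side is at most $L^{n} \to 0$ uniformly in $x$, so any $n$ with $L^{n} < \varepsilon$ works. For such $n$ I set $g = f^{n} \in \c$, and the display above collapses to $g^{-1}(A \cap U) = A \cap X = A$, which is the first assertion; in particular it witnesses $A \leq_\c A \cap U$, and hence $A \leq_\W A \cap U$.

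For the ``moreover'' part I would prove the reverse inequality $A \cap U \leq_\W A$ by a one-point retraction exploiting the clopenness of $U$. Since $A \neq \pre{\omega}{\omega}$, fix $z \in \neg A$ and define $h \colon \pre{\omega}{\omega} \to \pre{\omega}{\omega}$ by $h(x) = x$ for $x \in U$ and $h(x) = z$ for $x \notin U$. As $U$ is clopen, $h$ is continuous, and
\[
h^{-1}(A) = (A \cap U) \cup \{ x \notin U \mid z \in A \} = A \cap U
\]
because $z \notin A$. Thus $A \cap U \leq_\W A$, and combined with $A \leq_\W A \cap U$ from the first part this gives $A \equiv_\W A \cap U$.

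The only delicate point is the uniform estimate $f^{n}(X) \subseteq U$, which I expect to be the step worth isolating: it relies on $X$ having finite diameter, which holds for the Baire space with its standard metric (diameter $1$) and hence suffices for the setting of this paper. For a general \emph{unbounded} complete metric space the iterates $f^{n}(X)$ need not shrink into $U$ at all (for instance $f(x) = x/2$ on \( \RR \) has $f^{n}(\RR) = \RR$ for every $n$), so there the simple choice $g = f^{n}$ fails and a more careful construction of $g$ would be required; over \( \pre{\omega}{\omega} \), however, boundedness makes the argument immediate.
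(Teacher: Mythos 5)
Your proof is correct and is essentially the paper's own argument: take \( g = f^{n} \) for \( n \) large enough that the range of \( f^{n} \) falls inside \( U \) (using \( (f^{n})^{-1}(A)=A \) and the fixed point), and for the ``moreover'' part use the retraction that is the identity on the clopen set \( U \) and constant with value some \( z \notin A \) off \( U \). Your closing caveat about unbounded spaces applies equally to the paper's proof, which likewise asserts that the range of \( f^{n(\varepsilon)} \) has diameter \( < \varepsilon \) --- an estimate that presupposes \( \operatorname{diam}(X) < \infty \), as holds in the intended application to \( (\pre{\omega}{\omega},d) \).
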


\begin{proof}
It is enough to notice that for every \( \varepsilon > 0 \) there is \( n(\varepsilon) \in \omega \) such that 
the range of \( f^{n(\varepsilon)}   = \underbrace{f \circ \dotsc \circ f}_{n(\varepsilon)} \) has diameter \( < \varepsilon \). 
Since \( x_f \), being the fixed point of \( f \), is always in such range, we get that letting \(\varepsilon\) be such that 
\( B(x_f,\varepsilon) = \{ x \in X \mid d(x_f,x) < \varepsilon \} \subseteq U \),  the range of \( f^{n(\varepsilon)} \) is totally contained in 
\( B(x_f,\varepsilon) \), and hence also in \( U \). Finally, \( f^{n(\varepsilon)} \) is clearly a contraction and \( (f^{n(\varepsilon)})^{-1}(A) = A \) because \( f^{-1}(A) = A \) by assumption.

For the last part, since contractions are continuous functions we just need to show \( A \cap U \leq_\W A \): but it is easy to see that this is witnessed by the continuous function \( (\id_X \restriction U ) \cup f_{\bar{y}} \), where  \( f_{\bar{y}} \) is the constant function with value \( \bar{y} \in \neg A \).
\end{proof}

\section{Computable functions} \label{sec:computable} 

Throughout this section, we assume a certain familiarity with the basic concepts and terminology of recursion theory and effective descriptive set 
theory, in particular with the notions of recursive/recursively enumerable subset of \( \omega \) (and its Cartesian products), and with the 
Kleene pointclasses \( \Sigma^0_n \) and \( \Sigma^1_n \) (for \( n \in \omega \)). A good reference for these topics containing all necessary 
definitions is~\cite{Moschovakis:1980}.

As explained in~\cite[Proposition 2.6]{Kechris:1995}, every continuous function from \( \pre{\omega}{\omega} \) into itself can be represented 
with a monotone and length-increasing \( \varphi \colon \pre{< \omega}{\omega} \to \pre{< \omega}{\omega} \). More precisely: 
\( f \colon \pre{\omega}{\omega} \to \pre{\omega}{\omega} \) is continuous if and only if there is 
\( \varphi \colon \pre{< \omega}{\omega} \to \pre{< \omega}{\omega} \) such that
\begin{enumerate}[(a)]
\item
\( s \subseteq t \Rightarrow \varphi(s) \subseteq \varphi(t) \) for every \( s,t \in \pre{< \omega}{\omega} \);
\item
\( \lim_{n \to \infty} \leng(\varphi(x \restriction n)) = \infty \) for all \( x \in \pre{\omega}{\omega} \);
\item
\( f(x) = \bigcup_{n \in \omega } \varphi(x \restriction n) \) for all \( x \in \pre{\omega}{\omega} \).
\end{enumerate}
When (a)--(c) above are satisfied by some \( \varphi \), we say that \(\varphi\) is an \emph{approximating function for \( f \)}.

If we require that a \( \varphi \) as above be computable, then \( f \) itself may be dubbed computable. To be more precise, let \( \mathfrak{G} \colon \pre{< \omega}{\omega} \to \omega \) be the G\"odel bijection, and call a function \( \varphi \colon \pre{< \omega}{\omega} \to \pre{< \omega}{\omega} \) \emph{computable} if and only if  \( \mathfrak{G} \circ \varphi \circ \mathfrak{G}^{-1} \colon \omega \to \omega \) is computable. Then we may introduce the following definition.

\begin{definition} \label{def:computable}

A function \( f \colon \pre{\omega}{\omega} \to \pre{\omega}{\omega} \) is \emph{computable} if and only if there is a computable approximating function \( \varphi \) for \( f \).
\end{definition}

It is easy to check that Definition~\ref{def:computable} is actually equivalent to the definition of a \emph{recursive} function given
 in~\cite[Section 3D]{Moschovakis:1980}.%
\footnote{A function \( f \colon  \pre{\omega}{\omega} \to \pre{\omega}{\omega} \) is called \emph{recursive} if and only if 
the set \( G^f = \{ (x,n) \in \pre{\omega}{\omega} \times \omega \mid f(x) \in \Nbhd_{\mathfrak{G}^{-1}(n)} \} \) is \( \Sigma^0_1 \), i.e.\ it 
is of the form \( \bigcup_{(l,k) \in A} \Nbhd_{\mathfrak{G}^{-1}(l)} \times \{ k \} \) for some recursively enumerable
 \( A \subseteq \omega \times \omega \).}

Since \( \id = \id_{\pre{\omega}{\omega}} \) is computable and the composition of computable functions is computable, setting 
\( \Comp = \{ f \colon \pre{\omega}{\omega} \to \pre{\omega}{\omega} \mid f \text{ is computable} \} \) we get that \( \leq_\Comp \) is a 
preorder, and thus it induces a degree-structure on \( \pow(\pre{\omega}{\omega}) \). Of course, a meaningful use of such a preorder should be confined to subsets of \( \pre{\omega}{\omega} \) which can be defined in a ``recursive fashion'', 
e.g.\ to the Kleene pointclasses \( \Sigma^0_n \) and \( \Sigma^1_n \). However, we are now going to show that even when restricted to
 \( \Pi^0_1 \) or to \( \Sigma^0_2 \), the preorder \( \leq_\Comp \) yields to a quite complicated hierarchy of degrees. This will provide a first 
example of a widely considered%
\footnote{For example the class of computable functions is used to define the Weihrauch reducibility and its induced lattice of degrees: these notions are central in computable analysis, and allows us to e.g.\ classify the computational content of some classical theorems --- see e.g.~\cite{Marconeetal} and the references contained therein.}
 and reasonably complex class of functions lacking the crucial condition%
\footnote{It is easy to see that \( \Comp \) does not even contain e.g.\ constant functions whose unique value is not recursive (as a function from \(\omega\) into itself).}
 \( \F \supseteq \L \) and whose induced degree-structure is very bad. In particular, Theorem~\ref{th:computable} gives a precise mathematical formulation to the common opinion that the 
effective counterpart of the Wadge hierarchy cannot be used as a tool for 
getting a reasonable classification of subsets of \( \pre{\omega}{\omega} \).

\begin{theorem}\label{th:computable}
\begin{enumerate}
\item
The structure of \emph{recursive} subsets of \(\omega\) under inclusion can be embedded into \( \Deg_{\Pi^0_1}(\Comp) \);
\item
the structure of \emph{recursively enumerable} subsets of \(\omega\) under inclusion can be embedded into \( \Deg_{\Sigma^0_2}(\Comp) \).
\end{enumerate}
\end{theorem}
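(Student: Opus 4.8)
The plan is to build the embedding from a single auxiliary device: a family \( (P_n)_{n \in \omega} \) of subsets of \( \pre{\omega}{\omega} \) which is \emph{uniformly \( \Pi^0_1 \)} and \emph{\( \Comp \)-independent}, in the sense that \( P_n \nleq_\Comp \bigcup_{m \neq n} \langle m \rangle {}^\smallfrown P_m \) for every \( n \). Granting such a family, I would encode a recursive set \( S \) by the column-union
\[
A_S = \bigcup_{n \in S} \langle n \rangle {}^\smallfrown P_n = \{ x \in \pre{\omega}{\omega} \mid x(0) \in S \text{ and } \langle x(1), x(2), \dots \rangle \in P_{x(0)} \} .
\]
Since \( S \) is recursive and the \( P_n \) are uniformly \( \Pi^0_1 \), the displayed condition is \( \Pi^0_1 \), so \( A_S \in \Pi^0_1 \); the content of part~(1) is then that \( S \mapsto [A_S]_\Comp \) is an order-embedding of the poset of recursive subsets of \( \omega \), ordered by \( \subseteq \), into \( \Deg_{\Pi^0_1}(\Comp) \).

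For the easy (monotonicity) direction \( S \subseteq T \Rightarrow A_S \leq_\Comp A_T \) I would write the reduction explicitly: on the clopen piece \( \Nbhd_{\langle n \rangle} \) let \( f \) act as the identity when \( n \in S \) (legitimate since then \( n \in T \) and the \( n \)-th columns of \( A_S \) and \( A_T \) coincide) and as a constant with a fixed computable value outside \( A_T \) when \( n \notin S \); branching on \( x(0) \in S \) is computable because \( S \) is recursive, and one checks \( f^{-1}(A_T) = A_S \). For the hard direction I would use that the map \( y \mapsto \langle n \rangle {}^\smallfrown y \) witnesses the equivalence \( P_n \leq_\Comp A_S \iff n \in S \): the implication \( \Leftarrow \) is the previous reduction, while if \( n \notin S \) then \( A_S \subseteq \bigcup_{m \neq n} \langle m \rangle {}^\smallfrown P_m \), so \( P_n \leq_\Comp A_S \) would give \( P_n \leq_\Comp \bigcup_{m \neq n}\langle m \rangle {}^\smallfrown P_m \) by transitivity, contradicting independence. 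Since the property ``\( P_n \leq_\Comp (\cdot) \)'' is upward closed under \( \leq_\Comp \), from \( A_S \leq_\Comp A_T \) and \( n \in S \) I get \( P_n \leq_\Comp A_T \), whence \( n \in T \); thus \( A_S \leq_\Comp A_T \Rightarrow S \subseteq T \), completing the embedding.

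Everything therefore rests on producing the independent family, and this is where I expect the real work to be. The obstruction is that topology is useless here: every \( P_n \) I want is a proper closed set, and all proper closed sets are \( \boldsymbol{\Pi}^0_1 \)-complete, hence pairwise \( \leq_\W \)-equivalent, so the \( P_n \) must be pulled apart by purely effective means. I would construct them as the branch sets \( P_n = [T_n] \) of uniformly recursive trees built by a diagonalisation meeting, for all \( e \) and \( n \), the requirement that the \( e \)-th total computable functional \( \Phi_e \) fail to reduce \( P_n \) to \( \bigcup_{m \neq n}\langle m \rangle {}^\smallfrown P_m \): one reserves inside \( P_n \) a point \( y \) and keeps its \( \Phi_e \)-image out of the target, which is automatic if that image has first coordinate \( n \) (the target has no \( n \)-th column) and is otherwise achieved, when the image heads into a column \( m \neq n \), by a small coordinated modification of \( T_m \) removing the relevant branch. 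Organising these finitely interacting requirements into a priority construction, while keeping each \( P_n \) a proper closed set, is the main obstacle.

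Finally, for part~(2) I would run the same scheme with \( S \) replaced by a recursively enumerable \( W \), still setting \( A_W = \bigcup_{n \in W}\langle n \rangle {}^\smallfrown P_n \). Now membership in \( A_W \) is a conjunction of a \( \Sigma^0_1 \) condition (\( x(0) \in W \)) with a \( \Pi^0_1 \) condition (the tail lies in \( P_{x(0)} \)), so \( A_W \in \Sigma^0_2 \), placing the image in \( \Deg_{\Sigma^0_2}(\Comp) \). The hard direction is verbatim the same, using the same independent family. The one genuinely new point is monotonicity: for \( W \subseteq W' \) one can no longer branch on ``\( x(0) \in W \)'' since this is only semidecidable, and for the possibly infinitely many columns \( n \in W' \setminus W \) one must steer the image out of \( A_{W'} \) without ever confirming \( n \notin W \). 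I would resolve this by exploiting that presence of a column in the \( \Sigma^0_2 \) set \( A_{W'} \) is a limit condition: the reduction copies column \( n \) but is allowed to revise its guess each time the enumeration of \( W \) acts, so that its behaviour is correct in the limit. This changing-mind argument is exactly what a closed (\( \Pi^0_1 \)) target would forbid, which is the structural reason the recursive case lands in \( \Pi^0_1 \) while the r.e.\ case only lands in \( \Sigma^0_2 \).
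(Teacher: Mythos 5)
Your overall architecture for part (1) is the same as the paper's (which codes a recursive \( X \) as \( \bigcup_{n\in X} n {}^\smallfrown{} A_n \) for a uniformly \( \Pi^0_1 \) independent family taken from Fokina--Friedman--T\"ornquist), but there are two genuine gaps. First, the entire construction rests on the independent family, and you explicitly leave its existence as ``the main obstacle'': that is precisely the nontrivial ingredient, and without either carrying out the priority construction or citing a result supplying it (the paper uses \cite[Theorem 6]{Fokina:2010}), the proof is incomplete. Second, the independence property you posit --- \( P_n \nleq_\Comp \bigcup_{m\neq n}\langle m\rangle{}^\smallfrown{}P_m \) --- is too weak for your own ``hard direction''. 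The step ``\( P_n \leq_\Comp A_S \) and \( A_S \subseteq \bigcup_{m\neq n}\langle m\rangle{}^\smallfrown{}P_m \) imply \( P_n \leq_\Comp \bigcup_{m\neq n}\langle m\rangle{}^\smallfrown{}P_m \)'' is not transitivity and is false in general: a reduction \( g \) of \( P_n \) to \( A_S \) sends \( \neg P_n \) into \( \neg A_S \), which may well meet the larger set in columns \( m\notin S \), so \( g \) need not reduce \( P_n \) to the union. What you actually need (and what your diagonalisation sketch would in fact deliver, and what the cited theorem provides) is the stronger \emph{image-form} independence: no computable \( g \) satisfies \( g(P_n)\subseteq \bigcup_{m\neq n}P_m \). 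With that form the contradiction is immediate, since \( g(P_n)\subseteq A_S \). You should also note why a computable constant value outside \( A_T \) exists for your monotonicity reduction (independence forces each \( P_n \) to omit all computable points, so \( \vec 0 \) works).

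For part (2) the gap is more serious: with the unmodified coding \( A_W=\bigcup_{n\in W}\langle n\rangle{}^\smallfrown{}P_n \), your monotonicity reduction does not exist by the method you describe. A computable function is continuous and cannot ``revise its guess'': once it has committed to an output prefix \( \langle m\rangle{}^\smallfrown{}t \), membership of the output in \( A_{W'} \) is tied to whether the remaining tail lands \( t{}^\smallfrown{}(\cdot) \) inside \( P_m \), and a bare closed set \( P_m \) gives you no control over that after junk \( t \) has been emitted. The limit/revision idea only works if each column is insensitive to finite delays in the output and contains a guaranteed escape point for columns that never activate. This is exactly what the paper's padding achieves: it codes column \( n \) as \( \bigcup_{k,i} n{}^\smallfrown{}0^{(k)}{}^\smallfrown{}(i+1){}^\smallfrown{}A_n \), so the reduction can stall by emitting \( 0 \)'s until \( n \) is enumerated into \( W \) (landing on \( n{}^\smallfrown{}\vec 0\notin\psi_1(W') \) if it never is) and then copy, with the leading zeros absorbed harmlessly. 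Without introducing such a delay-tolerant coding, your part (2) does not go through; with it, you must also redo the hard direction (as the paper does, by effectively stripping the \( 0^{(k)}{}^\smallfrown{}(i+1) \) preamble using a computable tree for \( A_n \)), which is no longer ``verbatim the same''.
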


\begin{proof}
Part (1) is somehow implicit in~\cite[Theorem 9]{Fokina:2010}. By~\cite[Theorem 6]{Fokina:2010}, there exists a uniform sequence \( \langle A_n \mid n \in \omega \rangle \)  of nonempty \( \Pi^0_1 \) sets such that for every \( n \in \omega \) there is no computable (in fact, no hyperarithmetical) function \( g \) such that \( g(A_n) \subseteq \bigcup_{m \neq n} A_m \). In particular, none of the \( A_n \)'s can contain a recursive element \( x \in \pre{\omega}{\omega} \), as otherwise the constant function with value \( x \) would contradict the choice of the \( A_n \)'s. 

Given a recursive \( X \subseteq \omega \), set
\[
\psi_0(X) = \bigcup\nolimits_{n \in X} n {}^\smallfrown{} A_n.
\]
Then \( \psi_0(X) \in \Pi^0_1 \) because for every \( n \in \omega \) both \( \Nbhd_{\langle n \rangle}  = n {}^\smallfrown{} \pre{\omega}{\omega}	\) and \( n {}^\smallfrown{} (\neg A_n) \) are in \( \Sigma^0_1 \), the sequence of the \( A_n \)'s is uniform, and under our assumption \( \omega \setminus X \) is recursively enumerable. We claim that \( \psi_0 \) is the desired embedding. 

Let \( X,Y \subseteq \omega \) be two recursive sets. If \( X \subseteq Y \), then the map \( f \colon  \pre{\omega}{\omega} \to \pre{\omega}{\omega} \) defined by 
\[ 
f(x) = 
\begin{cases}
\vec{0} & \text{if } x(0) \notin X \\
x & \text{otherwise}
\end{cases}%
\]
is computable and clearly reduces \( \psi_0(X) \) to \( \psi_0(Y) \) since \( \vec{0} \), being a recursive point of \( \pre{\omega}{\omega} \), does not belong to \( \psi_0(Y) \). 

Conversely, let \( f \) witness \( \psi_0(X) \leq_\Comp \psi_0(Y) \) 
and assume towards a contradiction that there is \( n \in X \setminus Y \). Then since \( \psi_0(X) \cap \Nbhd_{\langle n \rangle} = n {}^\smallfrown{} A_n \), 
the map \( g \colon \pre{\omega}{\omega} \to \pre{\omega}{\omega} \) 
defined by
\[ 
g(x) = \langle f(n {}^\smallfrown{} x)(k+1) \mid k \in \omega \rangle
 \] 
would be computable and such that \( g(A_n) \subseteq \bigcup_{m \in Y} A_m \subseteq \bigcup_{m \neq n} A_m \), contradicting the choice of the \( A_n \)'s. Therefore \( X \subseteq Y \).

(2) We slightly modify the construction of (1). For every recursively enumerable \( X \subseteq \omega \), set
\[ 
\psi_1(X) = \bigcup\nolimits_{n \in X} \bigcup\nolimits_{k,i \in \omega} n {}^\smallfrown{}  0^{(k)} {}^\smallfrown{} (i+1) {}^\smallfrown{}  A_n.
 \] 
Then \( \psi_1 (X) \) is clearly a \( \Sigma^0_2 \) set, and we claim that it is the desired embedding. 

Let \( X,Y  \subseteq \omega \) be 
recursively enumerable sets, and let \( T_X \) be a Turing machine enumerating \( X \).  If \( X \subseteq Y \), then let 
\( \varphi \colon \pre{< \omega}{\omega} \to \pre{< \omega}{\omega} \) be defined by setting \( 
\varphi(\emptyset)  = \emptyset \), \( \varphi(n {}^\smallfrown{} s)  = 
n {}^\smallfrown{} 0^{(\leng(s))} \) if \( n \) is \emph{not} enumerated by \( T_X \) in \( \leq \leng(s) \)-many steps, and 
\( \varphi(n {}^\smallfrown{}  s) = n {}^\smallfrown{} 0^{(k)}  {}^\smallfrown{}  s \restriction (\leng(s)-k) \) if \( n \) is enumerated 
by \( T_X \) in \( k \)-many steps for some \( k \leq \leng(s) \) (for every \( n \in \omega \) and \( s \in \pre{< \omega}{\omega} \)). Then it is easy to check that \( \varphi \) is computable and it satisfies conditions (a)--(b) above, so that \(\varphi\) is an approximating 
function for the computable function 
\( f \colon \pre{\omega}{\omega} \to \pre{\omega}{\omega} \colon x \mapsto \bigcup_{n \in \omega} \varphi(x \restriction n) \). 
Moreover, \( f(n {}^\smallfrown{} x ) \neq n {}^\smallfrown{}  \vec{0} \) if and only if \( n \in X \) and \( x \neq \vec{0} \), and in such case 
\( f(n {}^\smallfrown{} x) = n {}^\smallfrown{} 0^{(k)}  {}^\smallfrown{} x \) for some \( k \in \omega \). This easily implies that
\( f \) reduces \( \psi_1(X) \) to \( \psi_1(Y) \) (since we assumed \( X \subseteq Y \)). 

Conversely, let \( f \in \Comp \) be a witness of 
\( \psi_1(X) \leq_\Comp \psi_1(Y) \), and assume towards a contradiction that there is \( n \in X \setminus Y \). Let \( \varphi \colon \pre{< \omega}{\omega} \to \pre{< \omega}{\omega} \) be a computable approximating function for \( f \), and let \( T \subseteq \pre{< \omega}{\omega} \) be a computable tree such that \( A_n = [T] \), where \( [T] = \{ x \in \pre{\omega}{\omega} \mid \forall n \in \omega \, (x \restriction n \in T) \} \). Define \( \varphi' \colon \pre{< \omega}{\omega} \to \pre{< \omega}{\omega} \) by setting 
\begin{enumerate}[(i)]
\item
\( \varphi'(s) = \emptyset \) if \( s \in T \) and \( \varphi(n {}^\smallfrown{}  1 {}^\smallfrown{} s) \) is of the form \( m {}^\smallfrown{}  0^{(k)} \) for some \( m,k \in \omega \);
\item
\( \varphi'(s) = t \) if \( s \in T \) and \( \varphi(n {}^\smallfrown{} 1 {}^\smallfrown{} s) \) is of the form \( m {}^\smallfrown{} 0^{(k)} {}^\smallfrown{} (i+1) {}^\smallfrown{} t \) for some \( m,k,i \in \omega \) and \( t \in \pre{< \omega}{\omega} \);
\item
\( \varphi'(s) = \varphi'(s \restriction l) {}^\smallfrown{} 0^{(\leng(s))} \) if \( s \notin T \) and \( l < \leng(s) \) is largest such that \( s \restriction l \in T \).
\end{enumerate}%
The map \( \varphi' \) clearly satisfies condition (a) by the fact that \( T \) is closed under subsequences and that \( \varphi \) satisfies (a) as well. To see that \( \varphi' \) satisfies also condition (b), notice that if \( x \in A_n = [T] \) then \( \varphi(x \restriction l) \) must be of the form \( m {}^\smallfrown{} 0^{(k)} {}^\smallfrown{}  (i+1) {}^\smallfrown{}  t \) for all large enough \( l \in \omega \) because \( n {}^\smallfrown{} 1 {}^\smallfrown{} x \in \psi_1(X) \) and \( \varphi \) is an approximating function for the reduction 
\( f \) of \( \psi_1(X) \) to \( \psi_1(Y) \), while if \( x \notin A_n \) then for all large enough \( l \in \omega \) one has \( x \restriction l \notin T \), and 
hence \( \leng(\varphi'(x \restriction l)) \geq l \). Since \( \varphi' \) is clearly computable, this implies that \( \varphi' \) is an approximating 
function for the computable map 
\( g \colon \pre{\omega}{\omega} \to \pre{\omega}{\omega} \colon x \mapsto \bigcup_{i \in \omega} \varphi'(x \restriction i) \). 
Moreover, by the choice of \( f \) and \( \varphi \) one easily gets that%
\footnote{In fact, the function \( g \) would witness that \( A_n \leq_\Comp \bigcup_{m \in Z } A_m \) for every \( Z \supseteq Y \).} 
\( g(x) \in \bigcup_{m \in Y} A_m \subseteq \bigcup_{m \neq n} A_n \)  for every \( x \in A_n \), contradicting the choice of the \( A_n \)'s. 
Therefore \( X \subseteq Y \), as required.
\end{proof}

Obviously, Theorem~\ref{th:computable} can be relativized to any oracle \( z \in \pre{\omega}{\omega} \). Moreover, using the same methods 
one can easily see that similar results hold  when replacing \( \Comp \) with other larger classes of functions which are defined in an 
``effective way'': for example, one can show that the structure of hyperarithmetical subsets of \(\omega\) under inclusion can be embedded 
into the degree-structure \( \Deg_{\Delta^1_1}(\Hyp) \), where \( \Hyp \) is the collection of all hyperarithmetical functions from 
\( \pre{\omega}{\omega} \) into itself.

\section{Contractions} \label{sec:contractions}

Many of the following results will be stated assuming either \( \AD^\L \) or \( \AD^\L + \BP \). As recalled in Section~\ref{sec:definitions}, both 
these assumptions are
 (seemingly weaker) consequences of \( \AD \), so the reader unfamiliar with these special determinacy axioms may 
safely assume the full \( \AD \) throughout the section. Moreover, we remark that all the mentioned determinacy axioms are always used only in a local way (in the sense explained in the introduction): therefore, the restriction of each 
of the results below to the Borel realm is true \emph{without any further assumption beyond \( \ZF + \DC(\RR) \)} --- this feature will be tacitly 
used various times (see e.g.\ Corollary~\ref{cor3}).

\begin{proposition}[\( \AD^\L \)] \label{propdifferentdegrees}
Let \( A,B \subseteq \pre{\omega}{\omega} \). If \( A \) and \( B \) belong to different \( \L \)-degrees (i.e.\ \( A \not\equiv_\L B \)), then
\[ 
A \leq_\c B \iff A \leq_\L B.
 \] 
\end{proposition}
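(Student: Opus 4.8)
The plan is to prove the nontrivial implication, namely that $A \leq_\c B$ follows from $A \leq_\L B$ under the hypothesis $A \not\equiv_\L B$; the reverse implication is immediate since $\c \subseteq \L \cup \{\id\}$ and the two sets are distinct (if $A \equiv_\L B$ fails then in particular $A \neq B$, so any contraction reduction is genuinely a $\leq_\L$-reduction). So assume $A \leq_\L B$. I want to produce a \emph{contraction} witnessing $A \leq_\c B$. By Proposition~\ref{prop:folklore}(1), it suffices to show that $\pI$ wins the Lipschitz game $G_\L(\neg B, A)$, and by $\AD^\L$ this game is determined, so the task reduces to ruling out that $\pII$ has a winning strategy.

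The key dichotomy comes from applying determinacy to $G_\L(\neg B, A)$ and translating via Proposition~\ref{prop:folklore}. First I would observe that $\pII$ winning $G_\L(\neg B, A)$ means, by part (2) of that proposition, precisely $\neg B \leq_\L A$. So suppose for contradiction that $\pI$ does \emph{not} win $G_\L(\neg B, A)$; then by $\AD^\L$ player $\pII$ wins, giving $\neg B \leq_\L A$. Combined with the standing hypothesis $A \leq_\L B$, this yields $\neg B \leq_\L A \leq_\L B$, hence $\neg B \leq_\L B$. The plan is to leverage this to force $A \equiv_\L B$, contradicting the assumption that $A$ and $B$ lie in different $\L$-degrees.

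The heart of the argument is extracting $A \equiv_\L B$ from $A \leq_\L B$ together with $\neg B \leq_\L B$. From $\neg B \leq_\L B$ I get that $B$ is $\L$-selfdual, i.e.\ $[B]_\L = [\neg B]_\L$; under $\SLO^\L$ (which holds by $\AD^\L$, being a consequence of $\mathsf{SSLO}^\L$) the $\L$-selfdual degrees are exactly the ``single degree'' levels, and I would use the $\SLO^\L$-comparison between $A$ and $B$. Applying $\SLO^\L$ to the pair $A, B$ gives either $A \leq_\L B$ (which we have) or $\neg B \leq_\L A$; but we already derived $\neg B \leq_\L A$ in the contradiction case, and together with $A \leq_\L B$ this chains to $B \leq_\L \neg B \leq_\L A \leq_\L B$. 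Reading off the relevant links, $A \leq_\L B$ and $B \leq_\L A$ both hold, so $A \equiv_\L B$ --- the desired contradiction.

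The main obstacle I anticipate is getting the direction of the selfduality chain exactly right so that it genuinely closes into $B \leq_\L A$: one must be careful that $\neg B \leq_\L B$ gives $B \leq_\L \neg B$ as well (selfduality is symmetric, since $\neg\neg B = B$ and $\leq_\L$ respects complementation in the sense that $C \leq_\L D \iff \neg C \leq_\L \neg D$ for nonexpansive, hence composable-with-itself, reductions), and then to combine the pieces without circularity. Once the contradiction $A \equiv_\L B$ is reached, the assumption that $\pII$ wins $G_\L(\neg B, A)$ is refuted, so by determinacy $\pI$ wins, and Proposition~\ref{prop:folklore}(1) delivers a contraction $f$ with $A = f^{-1}(B)$, i.e.\ $A \leq_\c B$, completing the proof.
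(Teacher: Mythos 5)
Your proposal is correct and follows essentially the same route as the paper: assume $A \leq_\L B$, rule out $\neg B \leq_\L A$ by chaining reductions to contradict $A \not\equiv_\L B$ (using that $\neg B \leq_\L B$ yields $B \leq_\L \neg B$), then invoke determinacy of $G_\L(\neg B, A)$ and Proposition~\ref{prop:folklore} to extract a contraction. The appeal to $\SLO^\L$ in your third paragraph is an unnecessary detour (you already have both disjuncts in hand), but it does no harm.
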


\begin{proof}
One implication is obvious because \( \c \subseteq \L \). For the other direction, if \( A \leq_\L B \) then \( \neg B \nleq_\L A \), as if \( \neg B \leq_\L A \) then we would get \( \neg B \leq_\L B \), and hence also \( A \leq_\L B \leq_\L \neg B \leq_\L A \) (contradicting our assumption \( A \not\equiv_\L B \)). Therefore \( \pI \) wins \( G_\L(\neg B, A ) \) by Proposition~\ref{prop:folklore}(2) and  \( \AD^\L \), whence \( A \leq_\c B \) by Proposition~\ref{prop:folklore}(1).
\end{proof}

\begin{proposition}[\( \AD^\L \)] \label{propsamedegree}
Let \( A ,B \subseteq \pre{\omega}{\omega} \) be distinct sets such that \( A \equiv_\L B \). Then 
\[
A \leq_\c B \iff A \nleq_\L \neg A.
\]
\end{proposition}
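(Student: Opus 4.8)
The plan is to prove the two implications separately, noting that only the right-to-left direction will require the determinacy assumption $\AD^\L$.

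For the forward direction ($A \leq_\c B \Rightarrow A \nleq_\L \neg A$), I would argue that $A \leq_\c B$ already forces $A$ to be selfcontractible, and then invoke Lemma~\ref{lemma:banach}. Concretely: since $A \neq B$, the reduction $A \leq_\c B$ is witnessed by a genuine contraction $f \in \c$ with $A = f^{-1}(B)$. From $A \equiv_\L B$ I extract in particular $B \leq_\L A$, so there is a nonexpansive $g \in \L$ with $B = g^{-1}(A)$. Composing, $A = f^{-1}(g^{-1}(A)) = (g \circ f)^{-1}(A)$, and $g \circ f$ is again a contraction by part iii) of Remark~\ref{rmk:definitionfunctions} (since $\c$ is closed under left composition with nonexpansive maps). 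Hence $A$ is selfcontractible, and Lemma~\ref{lemma:banach} (a selfcontractible set is never $\L$-selfdual) yields $A \nleq_\L \neg A$. No determinacy is used here.

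For the converse ($A \nleq_\L \neg A \Rightarrow A \leq_\c B$), I would pass to Lipschitz games. Since $A \neq B$, by Proposition~\ref{prop:folklore}(1) it suffices to show that $\pI$ wins $G_\L(\neg B, A)$. By $\AD^\L$ this game is determined, so I argue by contradiction and suppose instead that $\pII$ wins it. Then Proposition~\ref{prop:folklore}(2) gives $\neg B \leq_\L A$, and complementing the reducing function gives $B \leq_\L \neg A$. Combined with $A \leq_\L B$ (the remaining half of $A \equiv_\L B$) this produces $A \leq_\L B \leq_\L \neg A$, i.e.\ $A \leq_\L \neg A$, contradicting the hypothesis. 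Therefore $\pI$ wins $G_\L(\neg B, A)$, whence $A \leq_\c B$.

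The technical heart is the composition observation in the forward direction: recognizing that a $\c$-reduction $A \leq_\c B$ together with the automatically available nonexpansive reduction $B \leq_\L A$ closes up into a self-contraction of $A$, so that the Banach-fixed-point obstruction of Lemma~\ref{lemma:banach} applies. The converse is then essentially bookkeeping with complements and the determinacy of $G_\L(\neg B, A)$; the only point requiring care is that $\neg B \leq_\L A$ and $B \leq_\L \neg A$ are interchangeable, because a nonexpansive reduction witnessing one is literally a nonexpansive reduction witnessing the complementary pair.
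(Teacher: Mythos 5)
Your proof is correct and follows essentially the same route as the paper: the forward direction combines the composition closure of \( \c \) with the Banach fixed-point obstruction of Lemma~\ref{lemma:banach}, and the backward direction uses \( \AD^\L \) together with Proposition~\ref{prop:folklore} to get a winning strategy for \( \pI \) and hence a contraction. The only (immaterial) differences are that the paper runs the forward direction by contradiction via \( A \leq_\c \neg A \) rather than via selfcontractibility, and analyzes the game \( G_\L(\neg A, A) \) before composing, whereas you work with \( G_\L(\neg B, A) \) directly.
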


\begin{proof}
For the forward direction, assume towards a contradiction that \( A \leq_\c B \) but \( A \leq_\L \neg A \). Then \( B \equiv_\L A \leq_\L \neg A \) by assumption, and since \( A \neq B \) implies that \( A \leq_\c B \) can be witnessed by a function in \( \c \), we would get \( A \leq_\c \neg A \) by~\eqref{eq:composition}, contradicting Lemma~\ref{lemma:banach}. 

If instead \( A \nleq_\L \neg A \), then \( \pI \) wins \( G_\L(\neg A, A) \) by Proposition~\ref{prop:folklore}(2) and \( \AD^\L \), and therefore \( A \) is selfcontractible by Proposition~\ref{prop:folklore}(1), that is \( A \leq_\c A \) can be witnessed by a function in \( \c \). Since \( A \equiv_\L B \), we get \( A \leq_\c B \) by~\eqref{eq:composition} again.
\end{proof}

\begin{remark} \label{rmk:nodeterminacy}
Notice that we actually did not use any determinacy axiom to show \( A \leq_\c B \Rightarrow A \nleq_\L \neg A \) (for \( A,B \) distinct subsets of \( \pre{\omega}{\omega} \) such that \(A \equiv_\L B \)). This fact will be used later in Corollary~\ref{cor5}.
\end{remark}

Despite their simplicity, Propositions~\ref{propdifferentdegrees} and~\ref{propsamedegree} have many interesting consequences. First of all, they provide a characterization of all selfcontractible subsets of \( \pre{\omega}{\omega} \).

\begin{corollary}[\( \AD^\L \)] \label{cor2}
For every \( A \subseteq \pre{\omega}{\omega} \), \( A \) is selfcontractible if and only if it is \( \L \)-nonselfdual.
\end{corollary}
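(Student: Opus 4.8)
The plan is to prove the two implications separately, with the forward direction essentially free and the reverse direction reducing to a determinacy argument that mirrors the second half of the proof of Proposition~\ref{propsamedegree}.

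First I would dispose of the ``only if'' direction. If \( A \) is selfcontractible, then the second assertion of Lemma~\ref{lemma:banach}, read contrapositively, immediately yields that \( A \) is \( \L \)-nonselfdual: that lemma states that an \( \L \)-selfdual set can never be selfcontractible, so selfcontractibility forces \( A \nleq_\L \neg A \). No determinacy is needed here.

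For the ``if'' direction, I would suppose \( A \nleq_\L \neg A \) and aim to produce a contraction \( f \) with \( f^{-1}(A) = A \). The key preliminary observation is that \( \L \)-selfduality is insensitive to complementation: a function \( g \) satisfies \( g^{-1}(A) = \neg A \) if and only if it satisfies \( g^{-1}(\neg A) = A \), since \( g^{-1}(\neg A) = \neg g^{-1}(A) \); hence \( A \leq_\L \neg A \) holds exactly when \( \neg A \leq_\L A \) holds. Consequently our hypothesis also gives \( \neg A \nleq_\L A \). By Proposition~\ref{prop:folklore}(2) this means \( \pII \) has no winning strategy in \( G_\L(\neg A, A) \), and so by \( \AD^\L \) the player \( \pI \) wins \( G_\L(\neg A, A) \). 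Applying the ``in fact'' clause of Proposition~\ref{prop:folklore}(1) with \( B = A \), a winning strategy for \( \pI \) in \( G_\L(\neg A, A) \) yields a contraction \( f \) with \( A = f^{-1}(A) \), which is precisely the statement that \( A \) is selfcontractible.

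The only genuine content is the symmetry observation \( A \leq_\L \neg A \iff \neg A \leq_\L A \), which is what lets the nonselfduality hypothesis rule out a \( \pII \)-strategy in the specific game \( G_\L(\neg A, A) \) whose \( \pI \)-strategies encode selfcontractions; everything else is a direct appeal to Proposition~\ref{prop:folklore} together with \( \AD^\L \). I do not anticipate any serious obstacle, as this argument is essentially the reverse implication of Proposition~\ref{propsamedegree} isolated as a standalone equivalence.
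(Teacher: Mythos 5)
Your proof is correct and follows the paper's own route: the forward direction is the contrapositive of Lemma~\ref{lemma:banach}, and the backward direction is exactly the argument appearing in the second half of the proof of Proposition~\ref{propsamedegree} (using the symmetry \( A \leq_\L \neg A \iff \neg A \leq_\L A \), determinacy of \( G_\L(\neg A, A) \), and Proposition~\ref{prop:folklore}(1)). No issues.
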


By the Steel-Van Wesep theorem~\cite[Theorem 3.1]{VanWesep:1978}, under \( \AD^\L + \BP \) we have that \( A \subseteq \pre{\omega}{\omega} \) is \( \L \)-selfdual if and only if it is \( \W \)-selfdual.
Therefore we get also the following variant of Corollary~\ref{cor2}.

\begin{corollary}[\( \AD^\L + \BP \)] \label{cor:2'}
For every \( A \subseteq \pre{\omega}{\omega} \), \( A \) is selfcontractible if and only if \( A \) is \( \W \)-nonselfdual.
\end{corollary}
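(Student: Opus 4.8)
The plan is to deduce this immediately by combining Corollary~\ref{cor2} with the Steel--Van Wesep theorem, exactly as the surrounding text suggests. Selfcontractibility has already been characterized purely in terms of \( \L \)-reducibility, so the only remaining work is to transport that characterization across the known equivalence between \( \L \)-selfduality and \( \W \)-selfduality.

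First I would invoke Corollary~\ref{cor2}, which holds under \( \AD^\L \) and hence a fortiori under the stronger hypothesis \( \AD^\L + \BP \): for every \( A \subseteq \pre{\omega}{\omega} \), the set \( A \) is selfcontractible if and only if it is \( \L \)-nonselfdual. This reduces the claim to showing that, under \( \AD^\L + \BP \), a set is \( \L \)-nonselfdual precisely when it is \( \W \)-nonselfdual. For the latter I would cite the Steel--Van Wesep theorem~\cite[Theorem 3.1]{VanWesep:1978}, recalled in Section~\ref{sec:definitions}, which states that under \( \AD^\L + \BP \) a set \( A \subseteq \pre{\omega}{\omega} \) is \( \L \)-selfdual if and only if it is \( \W \)-selfdual. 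Negating both sides of this biconditional gives that \( A \) is \( \L \)-nonselfdual if and only if \( A \) is \( \W \)-nonselfdual. Chaining this with the equivalence from the previous step yields that \( A \) is selfcontractible if and only if \( A \) is \( \W \)-nonselfdual, as required.

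There is no genuine obstacle here: the entire content of the corollary lies in the two previously established inputs, and the argument is a one-line concatenation of biconditionals. The only subtlety worth flagging is the role of the extra hypothesis \( \BP \) (beyond the \( \AD^\L \) that already suffices for Corollary~\ref{cor2}): it is precisely what the Steel--Van Wesep theorem needs in order to collapse each \( \omega_1 \)-block of consecutive \( \L \)-selfdual degrees into a single \( \W \)-selfdual degree, and hence to guarantee that \( \L \)-selfduality and \( \W \)-selfduality coincide. Without it the second step of the chain would fail, which is why the corollary is stated under \( \AD^\L + \BP \) rather than under \( \AD^\L \) alone.
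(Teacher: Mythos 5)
Your argument is correct and is exactly the paper's: the corollary is stated immediately after the remark that, by the Steel--Van Wesep theorem, under \( \AD^\L + \BP \) a set is \( \L \)-selfdual if and only if it is \( \W \)-selfdual, and the paper derives it by chaining this with Corollary~\ref{cor2} precisely as you do. Your additional comment on why \( \BP \) is needed is accurate and matches the paper's discussion.
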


In particular, all sets lying properly in some level of the Baire stratification of the Borel sets are selfcontractible (in fact, this result can be obtained working in \( \ZF + \DC( \RR ) \) alone by Borel determinacy).

\begin{corollary} \label{cor3}
Every proper \( \boldsymbol{\Sigma}^0_\xi \) or \( \boldsymbol{\Pi}^0_\xi \) subset of \( \pre{\omega}{\omega} \) is selfcontractible.
\end{corollary}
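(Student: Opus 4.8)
The plan is to deduce the statement directly from Corollary~\ref{cor2}, which already equates selfcontractibility with \( \L \)-nonselfduality. Thus everything reduces to the observation that a set lying properly in a nonselfdual boldface pointclass is \( \L \)-nonselfdual, applied to the two nonselfdual boldface pointclasses \( \boldsymbol{\Sigma}^0_\xi \) and \( \boldsymbol{\Pi}^0_\xi \).

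First I would verify this \( \L \)-nonselfduality. Suppose \( A \) is properly \( \boldsymbol{\Sigma}^0_\xi \), i.e.\ \( A \in \boldsymbol{\Sigma}^0_\xi \setminus \boldsymbol{\Pi}^0_\xi \), and assume towards a contradiction that \( A \leq_\L \neg A \). Since every nonexpansive map is continuous, this gives \( A \leq_\W \neg A \); and as \( A \in \boldsymbol{\Sigma}^0_\xi \) we have \( \neg A \in \boldsymbol{\Pi}^0_\xi \), which is a boldface pointclass and hence closed under continuous preimages. Therefore \( A \in \boldsymbol{\Pi}^0_\xi \), so \( A \in \boldsymbol{\Sigma}^0_\xi \cap \boldsymbol{\Pi}^0_\xi = \boldsymbol{\Delta}^0_\xi \), contradicting the properness of \( A \). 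The argument for a proper \( \boldsymbol{\Pi}^0_\xi \) set is entirely symmetric (exchange the roles of \( \boldsymbol{\Sigma}^0_\xi \) and \( \boldsymbol{\Pi}^0_\xi \)). With \( \L \)-nonselfduality in hand, Corollary~\ref{cor2} immediately yields that \( A \) is selfcontractible.

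The only point deserving care—and hence the main obstacle—is the parenthetical claim that no determinacy beyond \( \ZF + \DC(\RR) \) is needed, since Corollary~\ref{cor2} is stated under \( \AD^\L \). Here I would unwind the proof of Corollary~\ref{cor2} (through Proposition~\ref{propsamedegree}) and observe that determinacy enters only locally, namely to conclude that \( \pI \) wins the single game \( G_\L(\neg A, A) \): once \( A \nleq_\L \neg A \) is known, \( \pII \) cannot win \( G_\L(\neg A, A) \)—otherwise \( \neg A \leq_\L A \) by Proposition~\ref{prop:folklore}(2), whence \( A \leq_\L \neg A \) by complementing the witnessing reduction—so the determinacy of \( G_\L(\neg A, A) \) forces \( \pI \) to win, and Proposition~\ref{prop:folklore}(1) then produces a contraction witnessing that \( A \) is selfcontractible. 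For a Borel \( A \) (in particular any \( \boldsymbol{\Sigma}^0_\xi \) or \( \boldsymbol{\Pi}^0_\xi \) set), the payoff set of \( G_\L(\neg A, A) \) is Borel, so its determinacy is guaranteed by Martin's Borel determinacy theorem within \( \ZF + \DC(\RR) \) alone. The rest is a formal application of the already established results.
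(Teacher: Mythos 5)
Your proof is correct and follows essentially the same route as the paper: reduce to Corollary~\ref{cor2} by observing that proper \( \boldsymbol{\Sigma}^0_\xi \) and \( \boldsymbol{\Pi}^0_\xi \) sets are \( \L \)-nonselfdual because these are nonselfdual boldface pointclasses and \( \L \subseteq \W \). Your careful unwinding of where determinacy is used—only for the single game \( G_\L(\neg A, A) \), covered by Borel determinacy—is exactly the ``local use of determinacy'' the paper invokes tacitly in its parenthetical remark.
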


\begin{proof}
Proper \( \boldsymbol{\Sigma}^0_\xi \) (respectively, \( \boldsymbol{\Pi}^0_\xi \)) sets are always \( \L \)-nonselfdual (because both \( \boldsymbol{\Sigma}^0_\xi \) and \( \boldsymbol{\Pi}^0_\xi \) are nonselfdual boldface pointclasses and \( \L \subseteq \W \)).
\end{proof}

Corollary~\ref{cor3} can be clearly extended to arbitrary nonselfdual boldface pointclasses \( \boldsymbol{\Gamma} \) assuming sufficiently strong determinacy axioms. Moreover, by Remark~\ref{rmk:r-contractible} one easily gets that Corollaries~\ref{cor2},~\ref{cor:2'}, and~\ref{cor3} can be restated using \( r \)-contractibility (for an arbitrary \( 0 < r < 1 \)) instead of contractibility.

Concerning the structure of the \( \c \)-degrees, we already observed in Lemma~\ref{lemma:banach} that there are no \( \c \)-selfdual degrees. The next corollary of Proposition~\ref{propsamedegree} shows how the \( \c \)- and the \( \L \)-degree of a set \( A \subseteq \pre{\omega}{\omega} \) are related one to the other with respect to inclusion.

\begin{corollary}[\( \AD^\L \)] \label{cor1}
Let \( A \subseteq \pre{\omega}{\omega} \). If \( A  \) is \( \L \)-nonselfdual then \( [A]_\c = [A]_\L \), while if \( A \) is \( \L \)-selfdual then \( [A]_\c = \{ A \} \subsetneq [A]_\L \).
\end{corollary}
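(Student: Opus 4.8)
The plan is to prove Corollary~\ref{cor1} by splitting into the two cases of the statement and applying Propositions~\ref{propdifferentdegrees} and~\ref{propsamedegree} together with Lemma~\ref{lemma:banach}. The key structural observation is that $[A]_\c \subseteq [A]_\L$ always holds trivially, since $\c \subseteq \L$ implies $A \equiv_\c B \Rightarrow A \equiv_\L B$; so in each case the real work is to determine exactly which members of $[A]_\L$ survive into $[A]_\c$.

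First I would handle the $\L$-nonselfdual case. Here I want to show $[A]_\c = [A]_\L$, and by the trivial inclusion it suffices to show $[A]_\L \subseteq [A]_\c$. So fix $B \equiv_\L A$; I must show $B \equiv_\c A$, i.e.\ both $A \leq_\c B$ and $B \leq_\c A$. If $A = B$ this is immediate from the definition of $\leq_\c$, so assume $A \neq B$. Since $A \equiv_\L B$ they lie in the same $\L$-degree, so Proposition~\ref{propsamedegree} applies: it gives $A \leq_\c B \iff A \nleq_\L \neg A$. But $A$ is $\L$-nonselfdual means precisely $A \nleq_\L \neg A$, so we get $A \leq_\c B$. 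By symmetry (swapping the roles of $A$ and $B$, noting $B$ is also $\L$-nonselfdual since $B \equiv_\L A$) we likewise obtain $B \leq_\c A$, whence $B \equiv_\c A$ as desired.

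Next I would treat the $\L$-selfdual case, where the claim is $[A]_\c = \{A\}$ and $\{A\} \subsetneq [A]_\L$. For the first equality, I must show that the only set $\c$-equivalent to $A$ is $A$ itself. Suppose $B \equiv_\c A$ with $B \neq A$; then in particular $A \leq_\c B$ via a genuine contraction (since $A \neq B$), and $A \equiv_\L B$ (as $\c$-equivalence implies $\L$-equivalence). Now Proposition~\ref{propsamedegree} forces $A \nleq_\L \neg A$, contradicting that $A$ is $\L$-selfdual. Hence no such $B$ exists and $[A]_\c = \{A\}$. For the proper inclusion $\{A\} \subsetneq [A]_\L$, I need to exhibit some $B \neq A$ with $B \equiv_\L A$; this is where I expect the only real subtlety to lie, since I must produce a genuinely distinct representative of the $\L$-degree. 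A clean way is to take $B = s {}^\smallfrown{} A$ for a suitable short sequence $s$, or more robustly to observe that $\L$-selfduality gives $A \equiv_\L \neg A$ and then argue that the degree $[A]_\L$ is infinite; since $A$ is $\L$-selfdual it is in particular nonempty and not equal to $\pre{\omega}{\omega}$ (the two trivial degrees are $\L$-nonselfdual by Theorem~\ref{th:Lhierarchy}(3)), so one can find a nonexpansive modification producing a distinct $\L$-equivalent set. The main obstacle is thus simply verifying that the $\L$-degree of a selfdual set contains at least two sets, which follows from the well-known fact that every nontrivial $\L$-degree contains continuum-many sets (e.g.\ via the $s {}^\smallfrown{} A$ construction, which is $\L$-equivalent to $A$ via the obvious shift maps).
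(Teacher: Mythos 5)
Your argument follows the paper's proof essentially step for step: both cases are settled by Proposition~\ref{propsamedegree} exactly as you describe, starting from the trivial inclusion \( [A]_\c \subseteq [A]_\L \) given by \( \c \subseteq \L \). One correction, however, on the part the paper leaves implicit (exhibiting some \( B \in [A]_\L \setminus \{ A \} \) when \( A \) is \( \L \)-selfdual): your ``clean way'' via \( B = s {}^\smallfrown{} A \) does \emph{not} work. The shift \( x \mapsto s {}^\smallfrown{} x \) is nonexpansive (indeed a contraction), which gives \( A \leq_\L s {}^\smallfrown{} A \), but the inverse direction would require a map multiplying distances by \( 2^{\leng(s)} \); in fact the paper itself notes (just before Corollary~\ref{cor:6'}) that \( A <_\L 0 {}^\smallfrown{} A \) \emph{strictly} whenever \( A \) is \( \L \)-selfdual, so \( s {}^\smallfrown{} A \notin [A]_\L \) in precisely the case where you invoke it. (The equivalence \( s {}^\smallfrown{} A \equiv A \) holds for \( \leq_\W \) and \( \leq_{\Lip} \), not for \( \leq_\L \).) Fortunately your fallback observation already suffices: \( \L \)-selfduality gives \( A \equiv_\L \neg A \) (the same reducing function witnesses both directions), and \( \neg A \neq A \) always, so \( \neg A \) itself witnesses \( \{ A \} \subsetneq [A]_\L \). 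With the \( s {}^\smallfrown{} A \) remark deleted, the proof is correct and coincides with the paper's.
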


\begin{proof}
The inclusion \( [A]_\c \subseteq [A]_\L \) (for an arbitrary \( A \subseteq \pre{\omega}{\omega} \)) follows from \( \c \subseteq \L \). 

Assume first that \( A \) is \( \L \)-nonselfdual and that \( B \in [A]_\L \) is distinct from \( A \): then \( A \leq_\c B \) by Proposition~\ref{propsamedegree}. Moreover, \( B \) is \( \L \)-nonselfdual as well by \( B \equiv_\L A \), so switching the roles of \( A \) and \( B \) we also get \( B \leq_\c A \), and hence \( B \in [A]_\c \). This shows that \( [A]_\L \subseteq [A]_\c \), and hence \( [A]_\c = [A]_\L \). 

Assume now that \( A \) is \( \L \)-selfdual, and that there is \( B \neq A \) such that \( B \in [A]_\c \). Then \( B \equiv_\L A \) (since \( [A]_\c \subseteq [A]_\L \)), and hence \( A \nleq_\c B \) by Proposition~\ref{propsamedegree} again.
\end{proof}

Proposition~\ref{propsamedegree} can also be used to show (in \( \ZF+\DC(\RR) \) alone) that the degree-structure induced by \( \c \) is bad, as it contains very large antichains.

\begin{corollary} \label{cor5}
The preorder \( \leq_\c \) contains antichains of size \( \pre{\omega}{2} \), i.e.\ there is an injection \( \psi \colon  \pre{\omega}{2} \to \pow(\pre{\omega}{\omega}) \) such that \( \psi(x) \) and \( \psi(y) \) are \( \leq_\c \)-incomparable whenever \( x \neq y \). In fact, a \( \leq_\c \)-antichain of size \( \pre{\omega}{2} \) can be found inside every \( \L \)-selfdual degree.
\end{corollary}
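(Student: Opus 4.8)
The plan is to reduce the statement to two facts and then combine them: \textbf{(a)} within any single \( \L \)-selfdual degree, any two \emph{distinct} sets are \( \leq_\c \)-incomparable, so that the whole degree is a \( \leq_\c \)-antichain; and \textbf{(b)} every \( \L \)-selfdual degree has at least \( |\pre{\omega}{2}| = 2^{\aleph_0} \) elements. Granting these, fix an arbitrary \( \L \)-selfdual \( A \): any injection \( \psi \colon \pre{\omega}{2} \to [A]_\L \) has range a \( \leq_\c \)-antichain of size \( \pre{\omega}{2} \) sitting inside the given degree, as required. (Such degrees exist already in \( \ZF \): every basic clopen neighbourhood \( \Nbhd_{\langle n \rangle} \) is easily seen to be \( \L \)-selfdual, since a nonexpansive map altering only the first coordinate reduces it to its complement, and the same witness reduces the complement back.)

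Fact (a) is essentially immediate from the work already done. Let \( P,Q \in [A]_\L \) be distinct with \( A \) being \( \L \)-selfdual. Since \( P \equiv_\L A \), and \( B \equiv_\L A \) implies \( \neg B \equiv_\L \neg A \), the set \( P \) is itself \( \L \)-selfdual, i.e.\ \( P \leq_\L \neg P \). By the determinacy-free half of Proposition~\ref{propsamedegree} isolated in Remark~\ref{rmk:nodeterminacy} --- namely \( P \leq_\c Q \Rightarrow P \nleq_\L \neg P \) for distinct \( \L \)-equivalent \( P,Q \) --- we get \( P \nleq_\c Q \); by symmetry \( Q \nleq_\c P \). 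As \( P \neq Q \), the identity clause in the definition of \( \leq_\c \) cannot apply, so \( P \) and \( Q \) are \( \leq_\c \)-incomparable. In particular this part uses no determinacy, matching the claim that the whole argument lives in \( \ZF + \DC(\RR) \).

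For Fact (b), fix \( \L \)-selfdual \( A \) (so \( A \equiv_\L \neg A \)) and for \( x \in \pre{\omega}{2} \) put \( \psi(x) = \bigoplus_{n \in \omega} C^x_n \), where \( C^x_n = A \) if \( x(n)=0 \) and \( C^x_n = \neg A \) if \( x(n)=1 \). These are pairwise distinct: if \( x(n) \neq y(n) \) then \( \psi(x)_{\lfloor n \rfloor} = C^x_n \neq C^y_n = \psi(y)_{\lfloor n \rfloor} \), using \( A \neq \neg A \). It remains to check \( \psi(x) \equiv_\L A \). Fixing a nonexpansive \( \lambda \) with \( \neg A = \lambda^{-1}(A) \) and a nonexpansive \( \rho \) with \( A = \rho^{-1}(\neg A) \) (both exist by selfduality), the map sending \( n {}^\smallfrown{} z \) to \( n {}^\smallfrown{} z \) when \( x(n)=0 \) and to \( n {}^\smallfrown{} \lambda(z) \) when \( x(n)=1 \) reduces \( \psi(x) \) to \( \bigoplus_{n \in \omega} A \), while the analogous map built from \( \rho \) gives the reverse reduction; both are nonexpansive because prepending the index \( n \) halves distances. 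Hence \( \psi(x) \equiv_\L \bigoplus_{n \in \omega} A \). Finally \( \bigoplus_{n \in \omega} A \equiv_\L A \): the map \( z \mapsto 0 {}^\smallfrown{} z \) witnesses \( A \leq_\L \bigoplus_{n} A \), whereas \( \bigoplus_{n} A \leq_\L A \) is the closure of \( \L \)-selfdual degrees under countable join --- a standard \( \ZF \) fact of Wadge theory (see e.g.\ \cite{Andretta:2007, VanWesep:1978}) whose reduction exploits the selfduality witness to absorb the one-symbol delay incurred by first reading the index \( n \).

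The main obstacle is precisely this last closure property \( \bigoplus_{n} A \leq_\L A \) for selfdual \( A \); everything else is either immediate from Remark~\ref{rmk:nodeterminacy} or a routine check on nonexpansive maps. With it in hand, \( \psi \) injects \( \pre{\omega}{2} \) into \( [A]_\L \), and by (a) its image is a \( \leq_\c \)-antichain inside the chosen \( \L \)-selfdual degree; since \( A \) was arbitrary, such an antichain can be located inside every \( \L \)-selfdual degree, and the whole argument goes through in \( \ZF + \DC(\RR) \).
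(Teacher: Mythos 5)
Your Fact (a) is exactly the paper's mechanism (the determinacy-free direction of Proposition~\ref{propsamedegree} via Remark~\ref{rmk:nodeterminacy}), and your verification that the maps in Fact (b) are nonexpansive is fine. The gap is the final step of Fact (b): the claim that \( \bigoplus_{n} A \leq_\L A \) for \( \L \)-selfdual \( A \) is \emph{not} a standard fact --- it is a standard fact for \( \leq_\W \), where a continuous reduction can wait before committing, but it is false for \( \leq_\L \). A nonexpansive \( f \) must determine \( f(x)(0) \) from \( x(0) \) alone, so e.g.\ for \( A = \Nbhd_{\langle 0 \rangle} \) one has \( \bigoplus_n A = \{ x \mid x(1) = 0 \} \nleq_\L \{ x \mid x(0)=0 \} = A \): a reduction would have to decide membership from \( x(0) \), which carries no information about \( x(1) \). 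More generally \( \bigoplus_n A \leq_\L A \) forces \( A \leq_\L A_{\lfloor k \rfloor} \) for some \( k \), which typically fails; indeed the failure of exactly this closure property is what produces the \( \omega_1 \)-blocks of consecutive \( \L \)-selfdual degrees inside a single \( \W \)-degree (compare Figures~\ref{fig:Lhierarchy} and~\ref{fig:Whierarchy}). The one-symbol delay incurred by reading the index \( n \) cannot be absorbed by a selfduality witness at the level of nonexpansive maps.

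The consequence is that your \( \psi(x) \)'s all lie in \( [\bigoplus_n A]_\L \), which is in general a \emph{strictly higher} \( \L \)-selfdual degree than \( [A]_\L \). Since \( \bigoplus_n A \) is still \( \L \)-selfdual and your sets are pairwise distinct and pairwise \( \L \)-equivalent, Fact (a) does apply to them, so the first assertion of the corollary (existence of some \( \leq_\c \)-antichain of size \( \pre{\omega}{2} \)) survives. But the ``in fact'' clause --- an antichain inside \emph{every} \( \L \)-selfdual degree, in particular inside the prescribed \( [A]_\L \) --- is not established. The paper's Claim circumvents the delay problem by a different construction: it puts the \emph{sections} \( A_{\lfloor n \rfloor} \) on the even cones, \( \psi(x) = \bigcup_n (2n) {}^\smallfrown{} A_{\lfloor n \rfloor} \cup \bigcup \{ \Nbhd_{\langle 2n+1 \rangle} \mid x(n) = 1 \} \), so that the backward reduction \( (2i) {}^\smallfrown{} y \mapsto i {}^\smallfrown{} y \) reassembles \( A \) itself with no delay, while the odd cones are sent to constants; both maps are nonexpansive outright, and no selfduality of \( A \) is even needed for the Claim. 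To repair your argument you would need to replace \( \bigoplus_n A \) by a set genuinely in \( [A]_\L \), which is essentially what the paper's section trick accomplishes.
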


\begin{proof}
Set 
\[ 
\psi(x) =\Nbhd_{\langle 0 \rangle} \cup \bigcup \{ \Nbhd_{\langle n+2 \rangle} \mid x(n) = 1 \} .
\]
Then \( \psi \colon \pre{\omega}{2} \to \pow(\pre{\omega}{\omega}) \) is injective and 
\( \psi(x) \equiv_\L \psi(y) \equiv_\L \Nbhd_{\langle 0 \rangle} \) for every \( x,y \in \pre{\omega}{2} \). 
Since \( \Nbhd_{\langle 0 \rangle} \) is clearly \( \L \)-selfdual, the result follows from (the forward direction of)
Proposition~\ref{propsamedegree} (together with Remark~\ref{rmk:nodeterminacy}). 

The additional part is 
obtained in the same way, using the following general claim.

\begin{claim}
For every \( A \neq \pre{\omega}{\omega}, \emptyset \) there is an injection \( \psi \colon \pre{\omega}{2} \to [A]_\L \).
\end{claim}

\begin{proof}[Proof of the Claim]
For \( x \in \pre{\omega}{2} \), set
\[
\psi(x) = {\bigcup_{n \in \omega} (2n) {}^\smallfrown{}  A_{\lfloor n \rfloor} } \cup {\bigcup \{ \Nbhd_{\langle 2n+1 \rangle} \mid x(n) = 1 \}}.
 \]
Fix \( y_0 \notin A \) and \( y_1 \in A \). 
It is then easy to check that the maps \( f , g \colon \pre{\omega}{\omega} \to \pre{\omega}{\omega} \) defined by
\[ 
f(n {}^\smallfrown{} y) = (2n) {}^\smallfrown{} y \quad \text{ and } \quad g(n {}^\smallfrown{} y) = 
\begin{cases}
i {}^\smallfrown{} y & \text{if } n = 2i \\
y_0 & \text{if } n  = 2i+1 \text{ and } x(i) = 0 \\
y_1 & \text{if } n = 2i+1 \text{ and } x(i) = 1
\end{cases}
 \] 
witness \( A \equiv_\L \psi(x) \) for every \( x \in \pre{\omega}{\omega} \).
\end{proof}
\end{proof}

On the other hand, assuming sufficiently strong  determinacy axioms one can show that the \( \c \)-hierarchy is not very bad, i.e.\ that it is at least well-founded.

\begin{corollary}[\( \AD^\L \)] \label{cor:6}
For every \( A,B \subseteq \pre{\omega}{\omega} \),
\[ 
A <_\c B \iff A <_\L B.
 \] 
In particular, further assuming \( \BP \) we get that the preorder \( \leq_\c \) is well-founded.
\end{corollary}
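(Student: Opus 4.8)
The plan is to prove the biconditional \( A <_\c B \iff A <_\L B \) by treating each direction separately, and then to deduce well-foundedness of \( \leq_\c \) by transporting descending chains across to \( \leq_\L \). Throughout I will rely on the basic inclusion \( \leq_\c \, \subseteq \, \leq_\L \): indeed, if \( A \leq_\c B \) then either \( A = B \) (whence \( A \leq_\L B \) via \( \id \in \L \)) or \( A = f^{-1}(B) \) for some contraction \( f \), which is in particular nonexpansive, so again \( A \leq_\L B \).

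For the backward direction I expect a short argument. Assuming \( A <_\L B \), the condition \( B \nleq_\L A \) forces \( A \not\equiv_\L B \), so Proposition~\ref{propdifferentdegrees} applies and yields \( A \leq_\c B \) from \( A \leq_\L B \). Moreover \( B \nleq_\c A \) is immediate from \( B \nleq_\L A \) together with the inclusion \( \leq_\c \, \subseteq \, \leq_\L \) noted above, so \( A <_\c B \).

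The forward direction is the heart of the matter. From \( A <_\c B \) I obtain \( A \leq_\L B \) for free, so the task reduces to ruling out \( B \leq_\L A \). Suppose towards a contradiction that \( B \leq_\L A \); then \( A \equiv_\L B \). The case \( A = B \) is impossible, since by the definition of \( \leq_\c \) it would give \( B \leq_\c A \) and thereby contradict the strictness \( B \nleq_\c A \). Hence \( A \neq B \) while \( A \equiv_\L B \), and I intend to invoke Proposition~\ref{propsamedegree}: from \( A \leq_\c B \) it yields \( A \nleq_\L \neg A \), i.e.\ \( A \) is \( \L \)-nonselfdual. At this point Corollary~\ref{cor1} gives \( [A]_\c = [A]_\L \), so from \( B \equiv_\L A \) I conclude \( B \equiv_\c A \), and in particular \( B \leq_\c A \), contradicting \( A <_\c B \). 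This contradiction establishes \( B \nleq_\L A \), hence \( A <_\L B \).

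Finally, the well-foundedness claim follows formally. Under \( \AD^\L + \BP \) the preorder \( \leq_\L \) is well-founded by Theorem~\ref{th:Lhierarchy}(1). Any infinite \( \leq_\c \)-descending chain \( A_0 >_\c A_1 >_\c \dotsb \) would, by the equivalence just proved applied to each consecutive pair, produce an infinite \( \leq_\L \)-descending chain \( A_0 >_\L A_1 >_\L \dotsb \), which is impossible; thus \( \leq_\c \) is well-founded. The main obstacle is the forward direction, and specifically the realization that once \( A \leq_\c B \) holds \emph{inside a single} \( \L \)-degree, Proposition~\ref{propsamedegree} forces \( A \) to be \( \L \)-nonselfdual, after which Corollary~\ref{cor1} collapses the \( \c \)-degree onto the \( \L \)-degree and delivers the reverse reduction \( B \leq_\c A \) that supplies the needed contradiction.
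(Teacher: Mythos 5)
Your proposal is correct and follows essentially the same route as the paper: the backward direction via Proposition~\ref{propdifferentdegrees}, and the forward direction by assuming \( A \equiv_\L B \) towards a contradiction, extracting \( \L \)-nonselfduality of \( A \) from Proposition~\ref{propsamedegree}, and then using Corollary~\ref{cor1} to collapse \( [A]_\c \) onto \( [A]_\L \) and obtain the contradictory reduction \( B \leq_\c A \). The well-foundedness argument via Theorem~\ref{th:Lhierarchy}(1) is also identical.
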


\begin{proof}
If \( A <_\L B \) then \( A <_\c B \) by Proposition~\ref{propdifferentdegrees}. Conversely, assume \( A <_\c B \), so that, in particular, \( A \neq B \). Then 
\( A \leq_\L B \) by \( \c \subseteq \L \). Assume towards a contradiction that \( A \equiv_\L B \): then \( A \nleq_\L \neg A \) by Proposition~\ref{propsamedegree} and \( A \leq_\c B \), whence \( [A]_\L = [A]_\c \) by Corollary~\ref{cor1}. But then \( B \equiv_\c A \), contradicting our choice of \( A \) and \( B \).

In particular, every infinite strictly \( \leq_\c \)-decreasing chain is also strictly \( \leq_\L \)-decreasing, and therefore by Theorem~\ref{th:Lhierarchy}(1) we get that \( \leq_\c \) is well-founded.
\end{proof}

More generally, combining Corollary~\ref{cor1} with Proposition~\ref{propdifferentdegrees}, we get a full description of the degree-structure induced by \( \c \). In fact, the relation \( \leq_\c \) is simply the refinement of \( \leq_\L \) in which all sets belonging to the same \( \L \)-selfdual degree are made pairwise \( \leq_\c \)-incomparable. Therefore the \( \c \)-hierarchy of degrees is obtained from the \( \L \)-hierarchy by splitting each \( \L \)-selfdual degree into the singletons of its elements. Figure~\ref{fig:chierarchy} summarizes the situation  (compare it with Figure~\ref{fig:Lhierarchy}): bullets represent \( \c \)-degrees, while the boxes around them represent the \( \L \)-degrees they come from.

\begin{figure}[!htbp]
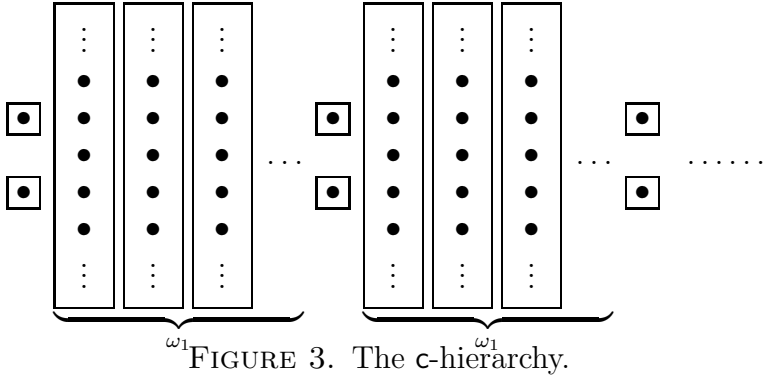

 \centering
\[
\begin{array}{c}
\phantom{\vdots}\\
\phantom{\bullet} \\
\framebox{$\bullet$} \\
\phantom{\bullet}\\
\framebox{$\bullet$} \\
\phantom{\bullet} \\
\phantom{\vdots}
\end{array}%
\smash[b]{
\underbrace{
\framebox{$
\begin{array}{c}
\vdots \\
\bullet \\
\bullet \\
\bullet \\
\bullet \\
\bullet \\
\vdots
\end{array}
$} \;
\framebox{$
\begin{array}{c}
\vdots \\
\bullet\\
\bullet \\
\bullet\\
\bullet\\
\bullet\\
\vdots
\end{array}
$} \;
\framebox{$
\begin{array}{c}
\vdots \\
\bullet\\
\bullet \\
\bullet\\
\bullet\\
\bullet\\
\vdots
\end{array}
$} \;
\dotsc
}_{\omega_1}}
\begin{array}{c}
\phantom{\vdots}\\
\phantom{\bullet} \\
\framebox{$\bullet$} \\
\phantom{\bullet}\\
\framebox{$\bullet$} \\
\phantom{\bullet} \\
\phantom{\vdots}
\end{array}
\smash[b]{
\underbrace{
\framebox{$
\begin{array}{c}
\vdots \\
\bullet \\
\bullet \\
\bullet \\
\bullet \\
\bullet \\
\vdots
\end{array}
$} \;
\framebox{$
\begin{array}{c}
\vdots \\
\bullet\\
\bullet \\
\bullet\\
\bullet\\
\bullet\\
\vdots
\end{array}
$} \;
\framebox{$
\begin{array}{c}
\vdots \\
\bullet\\
\bullet \\
\bullet\\
\bullet\\
\bullet\\
\vdots
\end{array}
$} \;
\dotsc
}_{\omega_1}}
\begin{array}{c}
\phantom{\vdots}\\
\phantom{\bullet} \\
\framebox{$\bullet$} \\
\phantom{\bullet}\\
\framebox{$\bullet$} \\
\phantom{\bullet} \\
\phantom{\vdots}
\end{array}
\;
\dotsc \dotsc 
\]
 \caption{The \( \c \)-hierarchy.}
 \label{fig:chierarchy}
\end{figure}

One may wonder what happens if we further restrict our attention to the collection of all contractions admitting a Lipschitz constant smaller than or equal to a fixed \( 0 < r < 1 \). More precisely, given \( 0 < r < 1 \) let \( \c(r) \) be the collection of all functions \( f \colon  \pre{\omega}{\omega}  \to \pre{\omega}{\omega} \) such that \( d(f(x),f(y)) \leq r \cdot d(x,y) \) for every \( x,y \in \pre{\omega}{\omega} \). Then \( \c(r) \) is closed under composition, and hence (with a little abuse of notation) we can define the preorder
\[ 
A \leq_{\c(r)} B \iff \text{either } A = B \text{ or } A = f^{-1}(B) \text{ for some } f \in \c(r).
 \] 
In particular, \( {\leq_{\c(r)}} = {\leq_\F} \) for \( \F = \c(r) \cup \{ \id \} \).

Given \( 0 <  r < 1 \), let \( n(r) \) be the smallest \( n \in \omega \) such that \( 2^{-(n+1)} \leq r \). Then \( \c(r) = \c(2^{-(n(r)+1)}) \), and the relation \( \leq_{\c(r)} \) admits a characterization via winning strategies for \( \pI \) in suitable reduction games similar to the one we obtained in Proposition~\ref{prop:folklore} for \( \leq_\c \) (which corresponds to the case \( \frac{1}{2} \leq r < 1 \)). In fact, it is enough to replace the Lipschitz game \( G_\L \) with the \emph{\( n(r) \)-Lipschitz game \( G_{n(r) \text{-}\Lip} \)} introduced in~\cite[Section 3]{MottoRos:2011a} to get that:
\begin{enumerate}
\item
\( A \leq_{\c(r)} B \iff A = B \vee {\pI \text{ wins } G_{n(r) \text{-}\Lip}(\neg B ,A)} \). In fact, if \( \pI \) wins \( G_{n(r) \text{-}\Lip}(\neg B ,A) \) then \( A = f^{-1}(B) \) for some \( f \in \c(r) \);
\item
\( A = f^{-1}(B) \) for some Lipschitz function with constant \( 2^{n(r)} \iff \pII \) wins \( G_{n(r) \text{-}\Lip}(A ,B) \).
\end{enumerate}

Using this characterization of \( \leq_{\c(r)} \), one can reprove suitable variants of most of the results needed to determine the corresponding degree-structure \( \Deg(\c(r)) \).%
\footnote{In fact, using the game theoretic characterization mentioned above (together with the fact that \( \Lip \subseteq \W \)), one can also show that we can replace \( \c \) with \( \c(r) \) (for an arbitrary \( 0 < r < 1 \)) in the completeness result Corollary~\ref{cor4}.}
 In particular, the analogue of Proposition~\ref{propsamedegree} in which \( \leq_\c \) is replaced by \( \leq_{\c(r)} \) (for an arbitrary \( 0 < r < 1 \)) is true. (For the forward direction use \( \c(r) \subseteq \c \), while for the backward direction use the fact that under \( \AD^\L \), a set \( A \subseteq \pre{\omega}{\omega} \) is \( \L \)-nonselfdual if and only if it is \( \Lip \)-nonselfdual --- see~\cite{MottoRos:2010}.) Therefore also the analogues of Corollaries~\ref{cor1} and~\ref{cor5} remain true when replacing \( \c \) with \( \c(r) \).

However, not all the results of this section can be generalized to arbitrary preorders of the form \( \leq_{\c(r)} \). For example,
 Proposition~\ref{propdifferentdegrees} fails if \( r < \frac{1}{2} \), because if \( A \subseteq \pre{\omega}{\omega} \) is \( \L \)-selfdual, 
then \( A <_\L 0 {}^\smallfrown{} A \) but \( A \nleq_{\c(r)} 0 {}^\smallfrown{} A \) (in particular, this counterexample shows also that  the first part Corollary~\ref{cor:6} fails for such \( r \)'s as well): nevertheless, we can still prove that \( \leq_{\c(r)} \) is well-founded (under \( \AD^\L + \BP \)) using a slightly different argument.

\begin{corollary}[\( \AD^\L + \BP \)] \label{cor:6'}
Let \( 0 < r < 1 \). Then the preorder \( \leq_{\c(r)} \) is well-founded.
\end{corollary}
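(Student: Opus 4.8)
The plan is to reduce well-foundedness of \( \leq_{\c(r)} \) to the well-foundedness of \( \leq_\L \) (Theorem~\ref{th:Lhierarchy}(1), available under \( \AD^\L + \BP \)), exactly in the spirit of the proof of Corollary~\ref{cor:6}, but paying attention to the fact that the equivalence \( A <_{\c(r)} B \iff A <_\L B \) is \emph{not} available here: as noted just before the statement, Proposition~\ref{propdifferentdegrees} fails for \( r < \frac{1}{2} \). The crucial observation is that this failure is confined to pairs of sets lying in \emph{distinct} \( \L \)-degrees, whereas the behaviour of \( \leq_{\c(r)} \) \emph{inside} a single \( \L \)-degree is still entirely governed by the analogues of Proposition~\ref{propsamedegree} and Corollary~\ref{cor1} for \( \c(r) \), which do hold for every \( 0 < r < 1 \).

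First I would isolate the following local claim: if \( A \neq B \) and \( A \equiv_\L B \), then one never has \( A >_{\c(r)} B \). Here there are two cases according to whether the common \( \L \)-degree is selfdual or not. If \( [A]_\L \) is \( \L \)-selfdual, then \( A \leq_\L \neg A \) and \( B \leq_\L \neg B \), so by the (determinacy-free) forward direction of the analogue of Proposition~\ref{propsamedegree} for \( \c(r) \) we get \( A \nleq_{\c(r)} B \) and symmetrically \( B \nleq_{\c(r)} A \); thus \( A \) and \( B \) are \( \leq_{\c(r)} \)-incomparable. If instead \( [A]_\L \) is \( \L \)-nonselfdual, then by the analogue of Corollary~\ref{cor1} we have \( [A]_{\c(r)} = [A]_\L \), so \( A \equiv_{\c(r)} B \). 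In either case \( A >_{\c(r)} B \) is impossible.

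Next I would feed the local claim into the well-foundedness argument. Suppose towards a contradiction that \( A_0 >_{\c(r)} A_1 >_{\c(r)} \cdots \) is an infinite strictly \( \leq_{\c(r)} \)-descending chain. Since \( \c(r) \subseteq \L \), each step gives \( A_{i+1} \leq_\L A_i \), so \( [A_0]_\L \geq [A_1]_\L \geq \cdots \) in the \( \L \)-hierarchy. Moreover \( A_i >_{\c(r)} A_{i+1} \) forces \( A_i \neq A_{i+1} \), and by the local claim this strict relation is incompatible with \( A_i \equiv_\L A_{i+1} \); hence \( A_i \not\equiv_\L A_{i+1} \), and together with \( A_{i+1} \leq_\L A_i \) this yields \( [A_{i+1}]_\L <_\L [A_i]_\L \) for every \( i \). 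Therefore \( ([A_i]_\L)_{i \in \omega} \) is an infinite strictly \( \leq_\L \)-descending chain of \( \L \)-degrees, contradicting Theorem~\ref{th:Lhierarchy}(1).

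The only genuine obstacle is the one already flagged in the discussion preceding the statement: because Proposition~\ref{propdifferentdegrees} fails for \( r < \frac{1}{2} \), one cannot simply transfer the identity of Corollary~\ref{cor:6} and then quote well-foundedness of \( \leq_\L \) verbatim. The resolution is precisely the local claim, which extracts the only feature of \( \leq_{\c(r)} \) that the argument actually needs --- namely that it never strictly descends within a single \( \L \)-degree --- and which, unlike the cross-degree comparison, survives for all \( 0 < r < 1 \).
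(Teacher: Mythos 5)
Your proof is correct and follows essentially the same route as the paper's: both reduce well-foundedness of \( \leq_{\c(r)} \) to that of \( \leq_\L \), using the \( \c(r) \)-analogues of Proposition~\ref{propsamedegree} and Corollary~\ref{cor1} to rule out strict descent inside a single \( \L \)-degree. Your organization is a slight streamlining — you prove the "no strict descent within an \( \L \)-degree" claim once and apply it to every consecutive pair, whereas the paper instead splits on whether the chain eventually stabilizes in one \( \L \)-degree and passes to a subsequence — but the key lemmas and the overall reduction are identical.
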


\begin{proof}
Assume towards a contradiction that there is a sequence \( \langle A_n \mid n \in \omega \rangle \) of subsets of \( \pre{\omega}{\omega} \) such that \( A_{n+1} <_{\c(r)} A_n \) for every \( n \in \omega \). 

Assume first that there is \( N \in \omega \) such that \( A_n \equiv_\L A_m \) for every \( n,m \geq N \). If \( A_N \) (hence also all the \( A_m \)'s
with \( m \geq N \)) is \( \L \)-nonselfdual, then \( [A_N]_\L = [A_N]_{\c(r)} \) by (the analogue of) Corollary~\ref{cor1}, and hence 
\( A_n \equiv_{\c(r)} A_m \) for every \( n,m \geq N \); if instead \( A_N \) is \( \L \)-selfdual, then \( A_n \nleq_{\c(r)} A_m \) for all distinct 
\( n,m \geq N \) by (the analogue of) Proposition~\ref{propsamedegree}. Thus in both cases we reach a contradiction with our choice of 
the \( A_n  \)'s.

Therefore, passing to a subsequence if necessary, we can assume without loss of generality that \( A_n \not\equiv_\L A_m \) for all distinct \( n,m \in \omega \). But then the sequence of the \( A_n \)'s is also \( \leq_\L \)-descending by \( \c(r) \subseteq \L \), contradicting Theorem~\ref{th:Lhierarchy}(1). 
\end{proof}

This shows that the degree-structure induced by \( \c(r) \) is, under suitable determinacy assumptions, another example of a bad degree-structure which is 
not very bad. However, when \( 0 < r < \frac{1}{2} \) (i.e.\ when \( \c(r) \neq \c \)) the \( \c(r) \)-hierarchy is much more difficult to be described: 
this is mainly due to the counterexample described before Corollary~\ref{cor:6'}. However, using the above game-theoretic characterization of \( \leq_{\c(r)} \) (together with the fact that, under our set-theoretical assumptions, \( \L \)-nonselfduality and \( \Lip \)-nonselfduality coincide) we can 
still give a full description of the \( \leq_{\c(r)} \)-preorders in term of \( \L \)-selfduality and \( \leq_\L \)-reducibility, from which a full description of the \( \c(r) \)-hierarchy can be easily recovered.

\begin{proposition}[\( \AD^\L + \BP \)] 
For every \( 0 < r < 1 \) and every \( A,B \subseteq \pre{\omega}{\omega} \)
\begin{align*}
A \leq_{\c(r)} B  \iff & {A = B} \vee  ({A \nleq_\L \neg A} \wedge {A \leq_\L B} ) \vee \\
&
  ({A \leq_\L \neg A} \wedge {0^{(n(r) + 1)} {}^\smallfrown{} A \leq_\L B}).
\end{align*}
\end{proposition}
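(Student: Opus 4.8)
The plan is to prove both implications, handling the trivial case \( A = B \) (which lies on both sides) uniformly and then splitting according to whether \( A \) is \( \L \)-selfdual. Throughout I write \( m = n(r) \), so that \( \c(r) = \c(2^{-(m+1)}) \), and set \( s_0 = 0^{(m+1)} \). I will freely use the game-theoretic characterization of \( \leq_{\c(r)} \) via the \( m \)-Lipschitz game recalled just above (items (1) and (2)): that \( A \leq_{\c(r)} B \) amounts to \( A = B \) or \( \pI \) winning \( G_{m\text{-}\Lip}(\neg B, A) \), and that \( \pII \) wins \( G_{m\text{-}\Lip}(X,Y) \) exactly when \( X = g^{-1}(Y) \) for some Lipschitz \( g \) of constant \( 2^{m} \). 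Determinacy of these games under \( \AD^\L \), together with the coincidence of \( \L \)- and \( \Lip \)-nonselfduality under \( \AD^\L \), will be the set-theoretic inputs.

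For the forward implication I assume \( A \leq_{\c(r)} B \) with \( A \neq B \), so \( A = f^{-1}(B) \) for some \( f \in \c(r) \); since \( \c(r) \subseteq \L \) this gives \( A \leq_\L B \), which already yields the second disjunct when \( A \) is \( \L \)-nonselfdual. When \( A \) is \( \L \)-selfdual (so \( A \neq \emptyset, \pre{\omega}{\omega} \)), I would upgrade this to \( s_0 {}^\smallfrown{} A \leq_\L B \) by exploiting that every \( f \in \c(2^{-(m+1)}) \) has a constant initial block: there is \( c \in \pre{m+1}{\omega} \) with \( f(z) \restriction (m+1) = c \) for all \( z \), and writing \( f(z) = c {}^\smallfrown{} f'(z) \) the tail map \( f' \) is nonexpansive. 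Fixing \( z_0 \notin A \), so that \( \bar{y} := f(z_0) \in \neg B \cap \Nbhd_c \), I define \( g(y) = f(\langle y(k+m+1) \mid k \in \omega \rangle) \) if \( y \restriction (m+1) = s_0 \), and \( g(y) = \bar{y} \) otherwise. Both branches agree with \( c \) on the first \( m+1 \) coordinates, so a routine check of the dependencies shows \( g \) is nonexpansive, and by construction \( g^{-1}(B) = s_0 {}^\smallfrown{} A \), giving the third disjunct.

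For the reverse implication I assume the right-hand side, dispose of \( A = B \), and in the remaining cases aim to show that \( \pII \) cannot win \( G_{m\text{-}\Lip}(\neg B, A) \); by \( \AD^\L \) this forces \( \pI \) to win, whence \( A \leq_{\c(r)} B \). So suppose \( \pII \) wins, i.e.\ \( \neg B = g^{-1}(A) \) for some Lipschitz \( g \) of constant \( 2^{m} \), so \( \neg B \leq_\Lip A \) and, on complements, \( B \leq_\Lip \neg A \). In the nonselfdual case I combine \( A \leq_\L B \) (hence \( \neg A \leq_\L \neg B \)) with \( \neg B \leq_\Lip A \) to get \( \neg A \leq_\Lip A \); thus \( A \) is \( \Lip \)-selfdual, hence \( \L \)-selfdual by the coincidence of \( \L \)- and \( \Lip \)-nonselfduality, contradicting \( A \nleq_\L \neg A \).

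In the selfdual case I instead use \( s_0 {}^\smallfrown{} A \leq_\L B \), \( B \leq_\Lip \neg A \), and \( \neg A \leq_\L A \) (from selfduality), and compose while tracking constants: since only the middle factor has constant \( 2^m \) and the outer two are nonexpansive, I obtain \( s_0 {}^\smallfrown{} A = q^{-1}(A) \) for some Lipschitz \( q \) of constant \( 2^{m} \). The decisive observation is that the map \( \phi(x) = q(s_0 {}^\smallfrown{} x) \) then satisfies \( A = \phi^{-1}(A) \), and because \( q \) has constant \( 2^m \) while the prepended block \( s_0 \) has length \( m+1 \), the coordinate \( \phi(x)(j) \) depends only on \( x \restriction j \); hence \( \phi \) is a genuine contraction (of constant \( \leq 1/2 \)) and \( A \) is selfcontractible, contradicting Lemma~\ref{lemma:banach}. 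This exact cancellation between the shift \( m+1 \) and the constant \( 2^m \) is the crux, and explains why the block \( 0^{(n(r)+1)} \) appears: a shift by only \( m \) would leave \( \phi \) merely nonexpansive and kill the contradiction. The main obstacle is therefore the constant-bookkeeping in the reverse direction, arranged so that the composed reduction lands in \( \Lip \) with constant exactly \( 2^m \), making Banach's theorem (through Lemma~\ref{lemma:banach}) applicable.
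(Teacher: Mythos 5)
Your proof is correct, and its overall skeleton (the game-theoretic characterization of \( \leq_{\c(r)} \) via \( G_{n(r)\text{-}\Lip} \), determinacy of those games under \( \AD^\L \), the coincidence of \( \L \)- and \( \Lip \)-nonselfduality, and Lemma~\ref{lemma:banach} as the obstruction in the selfdual case) matches the paper's. The genuine difference is in the forward direction: the paper derives \( 0^{(n(r)+1)} {}^\smallfrown{} A \leq_\L B \) indirectly, by contradiction via \( \mathsf{SSLO}^\L \) and the game characterization, whereas you extract it directly from a witness \( f \in \c(2^{-(n(r)+1)}) \) by observing that such an \( f \) has a constant initial block of length \( n(r)+1 \) with nonexpansive tail, and then pasting in a constant value off \( \Nbhd_{0^{(n(r)+1)}} \). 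This makes your forward direction determinacy-free (the paper's sketch uses \( \AD^\L \) in both directions), which is a small but real gain. In the backward direction you also streamline two points: the nonselfdual case is handled uniformly (composing \( A \leq_\L B \leq_\Lip \neg A \) to contradict \( \Lip \)-nonselfduality) instead of the paper's split into \( A \equiv_\L B \) versus \( A <_\L B \), and in the selfdual case you make explicit what the paper leaves implicit in the phrase ``\( B \leq_\L 0^{(n(r))} {}^\smallfrown{} A \), a contradiction'', namely that the composed reduction, precomposed with the shift by \( n(r)+1 \), is a contraction of constant \( \tfrac{1}{2} \) fixing \( A \), so that Banach's theorem applies. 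Your constant-bookkeeping (\( 2^{n(r)} \cdot 2^{-(n(r)+1)} = \tfrac{1}{2} \)) is exactly the reason the exponent \( n(r)+1 \) appears in the statement, and you identify this correctly.
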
%

\begin{proof}[Sketch of the proof]
In order to prove the forward direction, assume that \( A \leq_{\c(r)} B \). Since \( {A \leq_{\c(r)} B} \Rightarrow {A \leq_\L B} \) by 
\( \c(r) \subseteq \L \), the unique nontrivial case that needs to be considered is when \( A \leq_\L \neg A \) with \( A \neq B \). Assume towards a contradiction that 
\( {0^{(n(r)+1)} {}^\smallfrown{} A \nleq_\L B} \): then 
\[ 
\neg A \leq_\L A \leq_\L B \leq_\L 0^{(n(r))} {}^\smallfrown{} A \equiv_\L  0^{(n(r))} {}^\smallfrown{} (\neg A), 
\]
so that 
\( B = f^{-1}(\neg A) \) via some Lipschitz function \( f \) with constant \( 2^{n(r)} \). Therefore \( \pII \) would win \( G_{n(r)\text{-}\Lip}(\neg B,A) \), 
and hence \( A \nleq_{\c(r)} B \) because \( \pI \) could not win such a game, a contradiction.

For the backward direction, assume first that \( A \nleq_\L \neg A \) and \( A \neq B \). If \( A \equiv_\L B \), then \(  A \leq_{\c(r)} B \) by 
(the analogue of) Proposition~\ref{propsamedegree}. If instead \( A <_\L B \), then \( A <_\Lip B, \neg B \) as well: hence \( \pII \) cannot win 
\( G_{n(r)\text{-}\Lip}(\neg B , A) \), and since such a game is determined by our assumptions,%
\footnote{Recall that by~\cite[Lemma 6.1]{MottoRos:2011a} the principle \( \AD^\L \) implies that all games of the form \( G_{k\text{-}\Lip}(A,B) \) (for an arbitrary \( k \in \omega \)) are determined.} 
we get \( A \leq_{\c(r)} B \). Finally, assume that 
\( A \leq_\L \neg A \) and \( 0^{(n(r)+1)} {}^\smallfrown{}  A \leq_\L B \) (which in particular implies \( A \neq B \)). If \( A \nleq_{\c(r)} B \), then \( \pI \) could not win
\( G_{n(r)\text{-}\Lip}(\neg B,A) \). Since such a game is determined
 and \( A \leq_\L \neg A \), we would then have that \( B = f^{-1}(A) \) via some Lipschitz function \( f \) with constant 
\( 2^{n(r)} \), which in turn would imply \( B \leq_\L 0^{(n(r))} {}^\smallfrown{} A \), a contradiction.
\end{proof}

\section{Changing the metric} \label{sec:changingmetric}

As long as reducibility preorders \( \leq_\F \) between subsets of \( \pre{\omega}{\omega} \) are concerned,  there are three kinds of sets of 
functions \( \mathcal{F} \)  that have been considered in the literature whose definition actually depends on the standard metric \( d \) on \( \pre{\omega}{\omega} \) (rather than on its
topology), namely:

\begin{enumerate}[(1)]
\item
the collection \( \L = \L(d) \) of nonexpansive functions;
\item
the collection \( \Lip = \Lip(d) \) of all Lipschitz functions (with arbitrary constant);
\item
the collection \( \UCont = \UCont(d) \) of all uniformly continuous functions. 
\end{enumerate}

As recalled in Section~\ref{sec:definitions}, under suitable determinacy assumptions all three degree-structures induced by these notions of reducibility are  very good and isomorphic one to the other (see Figure~\ref{fig:Lhierarchy}); in fact the degree-structures \( (\Deg(\Lip) , \leq) \) and \( (\Deg(\UCont), \leq) \) coincide despite the fact that \( \Lip \subsetneq \UCont \).
A natural question is then the following:

\begin{question}
What happens if we replace \( d \) with another complete (ultra)metric \( d' \) compatible with the topology of \( \pre{\omega}{\omega} \)? Are the degree-structures induced by \( \L(d') \), \( \Lip(d') \), and \( \UCont(d') \) still well-behaved (i.e.\ good or very good)?
\end{question}%

Of course trivial modifications of \( d \), such as replacing the distances \( \langle 2^{-n} \mid n \in \omega \rangle \) used in the definition of \( d \) with any strictly decreasing sequence of reals converging to \( 0 \), yield exactly to the same classes of functions (and hence the same induced degree-structures). However, slightly more elaborated variants can heavily modify the resulting hierarchies of degrees.

\begin{definition} \label{def:d_0}
Let \( d_0  \colon (\pre{\omega}{\omega})^2 \to \RR^+ \) be the metric on \( \pre{\omega}{\omega} \) defined by:
\[ 
d_0(x,y) = 
\begin{cases}
0 & \text{if } x = y \\
d(x,y) & \text{if } x(0) = y(0) \\
\max \{ x(0),y(0) \} & \text{if } x(0) \neq y(0).
\end{cases}%
 \] 
\end{definition} 

\noindent
Thus \( (\pre{\omega}{\omega},d_0) \) is essentially obtained by ``gluing'' together the subspaces \( \Nbhd_{\langle n \rangle} \) of \( (\pre{\omega}{\omega},d) \) by letting all the points in \( \Nbhd_{\langle n \rangle} \) have distance \( \max\{ n,m \} \) from all the points in \( \Nbhd_{\langle m \rangle} \) (for distinct \( n,m \in \omega \)). 

The trivial but crucial observation is that for \( x,y \in \pre{\omega}{\omega} \)
\begin{enumerate}[($\dagger$)]
\item
\( d(x,y) < 1 \iff d_0(x,y) < 1 \), and in such case \( d(x,y) = d_0(x,y) \).
\end{enumerate}
Moreover, \( d(x,y) \leq d_0(x,y) \) for every \( x,y \in \pre{\omega}{\omega} \).

\begin{proposition} \label{prop:d_0compatible}
The metric \( d_0 \) is a complete ultrametric compatible with the product topology on \( \pre{\omega}{\omega} \).
\end{proposition}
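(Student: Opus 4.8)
The plan is to establish the three assertions---that $d_0$ is an ultrametric, that it induces the product topology, and that it is complete---in turn, using throughout the two displayed facts preceding the statement, namely the equivalence $(\dagger)$ and the inequality $d(x,y) \leq d_0(x,y)$. For the ultrametric claim, symmetry is immediate, and $d_0(x,y) = 0 \iff x = y$ follows by noting that for $x \neq y$ one has either $x(0) = y(0)$ and $d_0(x,y) = d(x,y) > 0$, or $x(0) \neq y(0)$ and $d_0(x,y) = \max\{x(0),y(0)\} \geq 1$. The substantive point is the strong triangle inequality $d_0(x,y) \leq \max\{d_0(x,z), d_0(z,y)\}$, which I would prove by a case split on the first coordinates. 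If $x(0) = y(0)$ then $d_0(x,y) = d(x,y) \leq 1$: when also $z(0) = x(0)$ the inequality reduces to the ultrametric inequality for $d$, and when $z(0) \neq x(0)$ both right-hand terms are $\geq 1 \geq d_0(x,y)$. If instead $x(0) \neq y(0)$ then $d_0(x,y) = \max\{x(0),y(0)\}$, and since $z(0)$ can equal at most one of $x(0), y(0)$, a short inspection of the three resulting configurations shows $\max\{d_0(x,z), d_0(z,y)\} \geq \max\{x(0),y(0)\}$. I expect this case analysis to be the only mildly laborious step, though each case is routine.

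For compatibility I would use $(\dagger)$ to identify the open balls of radius at most $1$: for $0 < \varepsilon \leq 1$ and any $x$, the condition $d_0(x,y) < \varepsilon$ forces $d_0(x,y) < 1$, hence $d_0(x,y) = d(x,y)$, and conversely, so that $B_{d_0}(x,\varepsilon) = B_d(x,\varepsilon)$. Since the balls of radius $2^{-n}$ (for $n \in \omega$) form a neighborhood basis at each point in either metric, the two topologies coincide; as $d$ induces the product topology, so does $d_0$.

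Finally, for completeness, given a $d_0$-Cauchy sequence $\langle x_k \rangle$ I would fix $K$ with $d_0(x_k, x_l) < 1$ for all $k, l \geq K$. By the definition of $d_0$ this forces $x_k(0) = x_l(0)$ for $k, l \geq K$, and then $(\dagger)$ yields $d_0(x_k, x_l) = d(x_k, x_l)$ there, so the tail $\langle x_k \rangle_{k \geq K}$ is $d$-Cauchy. By completeness of $(\pre{\omega}{\omega}, d)$ it converges in $d$ to some $x$; since $d(x_k, x) \to 0$ we eventually have $d(x_k, x) < 1$, whence $d_0(x_k, x) = d(x_k, x) \to 0$ by $(\dagger)$, and thus $x_k \to x$ in $d_0$. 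Both of these last two parts are short precisely because they simply transfer the corresponding properties of $d$ across the regime $d_0 < 1$ where the two metrics agree.
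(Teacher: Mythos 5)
Your proof is correct and follows essentially the same route as the paper: symmetry and positivity are noted, the strong triangle inequality is verified by a case split on the values of $x(0)$, $y(0)$, $z(0)$, and both compatibility and completeness are transferred from $d$ via $(\dagger)$. The paper states these last steps only as "easily follows from $(\dagger)$"; you have simply supplied the routine details.
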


\begin{proof} 
Since clearly \( d_0(x,y) = d_0(y,x) \) and \( d_0(x,y) = 0 \iff x=y \) (for all \( x,y \in \pre{\omega}{\omega} \)),
to see that \( d_0 \) is an ultrametric it is enough to fix \( x,y,x \in \pre{\omega}{\omega} \) and show that \( d_0(x,y) \leq \max \{ d_0(x,z),d_0(y,z) \} \): this can be straightforwardly checked by considering various cases, depending on whether the values of \( x(0) \), \( y(0) \), and \( z(0) \) coincide or are distinct. 
Finally, the fact that \( d_0 \) is complete and compatible with the product topology on \( \pre{\omega}{\omega} \) easily follows from 
($\dagger$) above.
\end{proof}

Denote by \( \subseteq^* \) the relation of inclusion modulo finite sets on \( \pow(\omega) \), namely for \( X,Y \subseteq \omega \) set
\[ 
X \subseteq^* Y \iff \exists \bar{k} \in \omega \, \forall k \geq \bar{k} \, (k \in X \Rightarrow k \in Y).
 \]

\begin{theorem} \label{th:maind_0}
Let \( A \subseteq \pre{\omega}{\omega} \) be a \( \W \)-selfdual set.
Then there is a map \( \psi \colon  \pow(\omega) \to [A]_\W \) such that for every \( X,Y \subseteq \omega \) it holds:
\begin{enumerate}[(1)]
\item
if \( X \subseteq^* Y \), then \( \psi(X) \leq_{\L(d_0)} \psi(Y) \);
\item
if \( \psi(X) \leq_{\Lip(d_0)} \psi(Y) \), then \( X \subseteq^* Y \).
\end{enumerate}
\end{theorem}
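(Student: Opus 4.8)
The plan is to realise the preorder \( (\pow(\omega), \subseteq^*) \) inside the \( d_0 \)-reducibilities by spreading copies of sets from \( [A]_\W \) along a very sparse sequence of the glued sheets \( \Nbhd_{\langle k \rangle} \), using the index \( k \) both as a carrier of a membership bit and, through the growth of the cross-distances \( \max\{k,k'\} \), as a coordinate that a Lipschitz map can barely displace. First I would fix a sequence \( (k_n)_{n\in\omega} \) of naturals growing \emph{super}-linearly, i.e.\ with \( k_{n+1}/k_n \to \infty \) (say \( k_n = 2^{2^n} \)); the point is that for every fixed \( L \) one has \( k_{n+1} > L\cdot k_n \) for all large \( n \), so no single Lipschitz bound lets the index \( k_n \) ``reach'' \( k_{n+1} \). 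Next, using that \( [A]_\W \) is a \( \W \)-selfdual degree, I would extract inside it a sequence \( B_0 <_\L B_1 <_\L B_2 <_\L \cdots \) which is \emph{strictly} increasing also for \( \leq_\Lip \) (so \( B_j <_\Lip B_{j+1} \)); under the standing assumptions \( [A]_\W \) splits into a long chain of distinct \( \L \)-degrees, cofinally many of which are already \( \Lip \)-inequivalent (recall that \( \leq_\Lip \) only collapses the finite shifts \( A \equiv_\Lip 0^{(i)}{}^\smallfrown A \)). Then I set
\[ \psi(X) = \bigcup_{n\in\omega} \langle k_n\rangle {}^\smallfrown C^X_n, \qquad C^X_n = B_{2n+1} \text{ if } n\in X, \quad C^X_n = B_{2n} \text{ if } n\notin X, \]
so that each sheet \( \Nbhd_{\langle k_n\rangle} \) carries a set of \( [A]_\W \) whose complexity is strictly higher when \( n\in X \), while the complexities are interleaved and increasing along the whole sequence. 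Since each \( C^X_n \equiv_\W A \) and a \( \W \)-selfdual degree is closed under countable join, \( \psi(X)\in[A]_\W \) for every \( X \).

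For (1), assume \( X\subseteq^* Y \) and fix \( N_0 \) with \( k\in X \Rightarrow k\in Y \) for all \( k> N_0 \). I act index-preservingly on every sheet of index \( \geq k_{N_0+1} \): such a sheet \( \Nbhd_{\langle k_n\rangle} \) (with \( n>N_0 \)) is mapped into itself by a \( d \)-nonexpansive map reducing \( C^X_n \) to \( C^Y_n \), which is possible since either \( C^X_n = C^Y_n \), or \( n\notin X \) and \( C^X_n = B_{2n}\leq_\L C^Y_n \); empty sheets of index \( \geq k_{N_0+1} \) are sent to a non-member point of their own sheet. The finitely many sheets of index \( <k_{N_0+1} \) (which include all ``bad'' ones \( k_0,\dots,k_{N_0} \)) are then all collapsed onto the single sheet \( \Nbhd_{\langle k_{N_0+1}\rangle} \), whose content \( C^Y_{N_0+1} \) has complexity \( \geq_\L B_{2N_0+2} >_\L B_{2n+1}\geq_\L C^X_n \) for every \( n\leq N_0 \), so each relevant content reduces into it. The key point is that this is \( d_0 \)-nonexpansive: because the collapse target \( k_{N_0+1} \) does not exceed any index \( k_i\geq k_{N_0+1} \) on which I act index-preservingly, the image distance \( \max\{k_{N_0+1},k_i\}=k_i \) between a collapsed sheet and such a sheet equals the source distance \( \max\{\,\cdot\,,k_i\}=k_i \), while distances internal to the collapsed block become \( <1 \) after application; the remaining cross-distances are checked the same way via \( (\dagger) \).

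For (2), let \( f \) witness \( \psi(X)\leq_{\Lip(d_0)}\psi(Y) \) with constant \( L \). By \( (\dagger) \) any two points of a common sheet are at \( d_0 \)-distance \( <1 \), so \( f \) maps each sub-ball of depth \( \lceil\log_2 L\rceil \) of a sheet into a single sheet; hence \( \Nbhd_{\langle k_n\rangle} \) lands in finitely many sheets, all of index \( \leq L\cdot k_n \) by \( \max\{g(k),g(k')\}\leq L\max\{k,k'\} \). Reading off tails, this gives a standard \( \Lip \)-reduction of \( C^X_n \) into the finite join \( \bigoplus C^Y_m \) of the contents of the target sheets. Take \( n\in X \), so \( C^X_n = B_{2n+1} \), and \( n \) large enough that \( L\cdot k_n<k_{n+1} \): then every target index lies in \( \{k_0,\dots,k_n\} \), so every summand \( C^Y_m \) has complexity \( \leq_\Lip B_{2n+1} \); since each \( B_j \) is \( \Lip \)-selfdual under the standing assumptions, the join sits exactly at its largest summand, whence \( B_{2n+1}\leq_\Lip B_j \) forces that summand to be \( B_{2n+1} \). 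As \( B_{2n+1} \) occurs among the \( C^Y_m \) (\( m\leq n \)) only at the member sheet of index exactly \( k_n \), this means \( C^Y_n = B_{2n+1} \), i.e.\ \( n\in Y \). Thus \( k\in X\Rightarrow k\in Y \) for all large \( k \), i.e.\ \( X\subseteq^* Y \).

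The main obstacle is the backward direction, and within it the \emph{two-sided} rigidity I must squeeze out of one Lipschitz inequality: the metric controls \( f \) only from above (giving \( g(k)\leq L k \)), so the missing lower control has to come entirely from the complexity side. This is exactly why both the interleaved \emph{strictly} \( \Lip \)-increasing sequence \( B_0<_\Lip B_1<_\Lip\cdots \) and the \emph{super}-linear spacing \( k_{n+1}\gg L k_n \) are indispensable: the former turns ``target index \( \leq n \)'' into ``complexity \( <_\Lip B_{2n+1} \) unless the index is exactly \( k_n \) and the bit is on'', while the latter guarantees that for each fixed \( L \) this dichotomy holds cofinitely. A secondary technical point to handle with care is that a Lipschitz map of large constant need not send a whole sheet into a single sheet, which forces the passage through finite joins together with the fact that, in the \( \Lip \)-hierarchy, a finite join lies precisely at the degree of its most complex summand.
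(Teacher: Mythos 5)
Your construction is essentially the paper's: you spread a strictly \( <_{\Lip(d)} \)-increasing sequence from \( [A]_\W \) over a superlinearly spaced family of sheets \( \Nbhd_{\langle k_n\rangle} \), encode the bit \( n\in X \) in the parity of the index, prove (1) by acting index-preservingly on high sheets and collapsing the finitely many low ones onto a single sheet, and prove (2) by localizing the Lipschitz reduction to single sheets and reading off the complexity. The forward direction and the general architecture are fine. There is, however, one genuine gap in the backward direction, at the step ``\( \Nbhd_{\langle k_n\rangle} \) lands in finitely many sheets, all of index \( \leq L\cdot k_n \)''. The inequality \( \max\{g(k),g(k')\}\leq L\max\{k,k'\} \) only applies when you can exhibit two points whose images lie in \emph{different} sheets; but for large \( n \) the whole source sheet \( \Nbhd_{\langle k_n\rangle} \) maps into a \emph{single} target sheet \( \Nbhd_{\langle j\rangle} \) (indeed that is exactly what your depth-\( \lceil\log_2 L\rceil \) argument plus the cross-sheet estimate \( \max\{j,j'\}\leq L/2 \) forces), and then no distance internal to that source sheet constrains \( j \) at all. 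A priori \( f \) could send \( \Nbhd_{\langle k_n\rangle} \) into a sheet of index \( k_{n+57} \); its content is \( \Lip \)-\emph{above} \( B_{2n+1} \), so your join argument returns no information about \( \rho_Y(n) \). To close this you must anchor against the image of a fixed reference sheet: \( f(\Nbhd_{\langle 0\rangle}) \) is contained in sheets of index below some fixed \( M \), and then comparing \( f(k_n{}^\smallfrown\vec 0) \) with \( f(\vec 0) \) gives \( j=\max\{j,\cdot\}\leq L\cdot k_n<k_{n+1} \) for all \( n \) past the point where \( k_n\geq M \) and \( k_{n+1}>Lk_n \). This is precisely what the paper's superlinear recursion \( n_{k+1}=n_k\cdot n_k+1 \) is for; you have the growth condition in place but never actually invoke the anchoring step that makes it bite.

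Two secondary points. First, the theorem carries no determinacy hypothesis, and the corollary drawn from it is advertised as holding in \( \ZF+\DC(\RR) \) alone; your extraction of the sequence \( B_0<_\Lip B_1<_\Lip\cdots \) from the ``long chain of \( \L \)-degrees inside \( [A]_\W \)'' leans on the Steel--Van Wesep structure theory, which needs \( \AD^\L+\BP \) outside the Borel realm. The paper instead builds the chain explicitly via \( A_{m+1}=\bigoplus_n 0^{(n)}{}^\smallfrown A_m \) and verifies \( A_n<_{\Lip(d)}A_m \) directly; you should do the same (your argument is otherwise unaffected). Second, your appeal to \( \Lip \)-selfduality of the \( B_j \) in the join step is unnecessary: \( \bigoplus_m D_m\leq_{\Lip(d)} B_{j_{\max}} \) follows from \( D_m\leq_{\Lip(d)}B_{j_{\max}} \) and \( \bigoplus_m B\equiv_{\Lip(d)}B \) (the unshift \( m{}^\smallfrown x\mapsto x \) is \( 2 \)-Lipschitz), with no selfduality needed -- harmless, but worth streamlining.
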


\begin{proof}
Without loss of generality, we may assume that \( A \leq_{\L(d)} \neg A \) (otherwise we replace \( A \) with \( A \oplus \neg A \)).  Recursively define a sequence \( \langle A_m \mid m \in \omega \rangle \) of subsets of \( \pre{\omega}{\omega} \) by setting:
\begin{align*}
A_0 & =  A, \\
A_{m+1} & =  \bigoplus_{n \in \omega} 0^{(n)} {}^\smallfrown{} A_m.
\end{align*}%
Arguing as in~\cite{MottoRos:2010}, it is easy to check that:
\begin{enumerate}[a)]
\item
\( A_m \leq_{\L(d)} \neg A_m \) (hence, in particular \( A_m \neq \pre{\omega}{\omega} \)) for every \( m \in \omega \);
\item
\( A_n \leq_{\L(d)} A_m \) for every \( n \leq m \in \omega\);
\item
\( A_m \nleq_{\Lip(d)} A_n \) for every \( n < m \in \omega\);
\item
\( s {}^\smallfrown{} A_m \equiv_{\Lip(d)} A_m \) for every \( m \in \omega \) and \( s \in \pre{< \omega}{\omega} \);
\item
\( A_m \equiv_\W A \) for every \(m \in \omega \).
\end{enumerate}

Recursively define the sequence \( \langle n_k \mid k \in \omega \rangle \) by setting 
\begin{align*}
n_0 & = 0, \\ 
n_{k+1} & = n_k \cdot n_k +1,
\end{align*}%
and for \( i \in \omega \) let \( \# i \) be the unique \( k \in \omega \) such that \( n_k \leq i < n_{k+1} \) (so that, in particular, \( \# n_k = k \)).
Finally, for \( X \subseteq \omega \) set
\[ 
\psi(X) = \bigoplus_{i \in \omega} A_{3 \# i + \rho_X(\# i)},
 \] 
where \( \rho_X \colon \omega \to 2 \) is the characteristic function of the set \( X \) defined by \( \rho_X(j) = 1 \iff j \in X \).
It is trivial to check that e) implies \( \psi(X) \equiv_\W A \) for every \( X \subseteq \omega \). We claim that \( \psi \) is as desired.

First we show that if \( X,Y \subseteq \omega \) are such that \( X \subseteq^* Y \), 
then \( \psi(X) \leq_{\L(d_0)} \psi(Y) \). Fix \( \bar{k} \in \omega \) such that \( \forall k \geq \bar{k} \, (k \in X \Rightarrow k \in Y) \).
For \( k < \bar{k} \), let \( g_k \) be a witness of 
\( A_{3k+\rho_X(k)}  \leq_{\L(d)} A_{3\bar{k}+\rho_Y(\bar{k})} \), which exists by property b) above; for 
\( k \geq \bar{k} \), let \( g_k \) be a witness of \( A_{3k+\rho_X(k)} \leq_{\L(d)} A_{3k+\rho_Y(k)} \), which exists by our 
choice of \( \bar{k} \) and b) again. Then define \( f \colon \pre{\omega}{\omega} \to \pre{\omega}{\omega} \) by setting 
for every \( i \in \omega \) and \( x \in \pre{\omega}{\omega} \)
\[ 
f(i {}^\smallfrown{} x) = \max \{ i, n_{\bar{k}} \} {}^\smallfrown{} g_{\# i}(x).
 \] 
It is straightforward to check that \( f \) reduces \( \psi(X) \) to \( \psi(Y) \), so it remains only to check that \( f \in \L(d_0) \). 
Fix \( x, y \in \pre{\omega}{\omega}\). If \( x(0) = y(0) \), then \( f(x)(0) = f(y)(0) \), so that both \( d_0(x,y) = d(x,y) \) and
 \( d_0(f(x),f(y)) = d(f(x),f(y)) \): therefore the inequality \( d_0(f(x),f(y)) \leq d_0(x,y) \) follows from the fact that all the \( g_k \)'s are in 
\( \L(d) \) together with the observation that the definition of \( f \) on \( \Nbhd_{\langle x(0) \rangle} = \Nbhd_{\langle y(0) \rangle} \) involves only \( g_{\# x(0)} \). Now assume that \( x(0) \neq y(0) \). If at least one of \( x(0) \) and \( y(0) \) is strictly above \( n_{\bar{k}} \), then 
\( f(x)(0) \neq f(y)(0) \), and so by definition of \( f \) and case assumption
\[ 
d_0(f(x),f(y)) = \max \{ f(x)(0),f(y)(0) \} = \max\{x(0),y(0) \} = d_0(x,y).
 \] 
If instead \( x(0),y(0) \leq n_{\bar{k}} \), then \( f(x)(0) = f(y)(0) = n_{\bar{k}} \), so that
\[ 
d_0(f(x),f(y)) \leq \frac{1}{2} < d_0(x,y)
 \] 
because we assumed \( x(0) \neq y(0) \).
Thus, in all cases \( d_0(f(x),f(y)) \leq d_0(x,y) \), and hence we are done.

\medskip

Assume now that \( X,Y \subseteq  \omega \) are such that \( \psi(X) \leq_{\Lip(d_0)} \psi(Y) \), let \( f \colon  \pre{\omega}{\omega} \to \pre{\omega}{\omega} \) be a witness of this, and let \( 0 \neq l \in \omega \) be such that 
\[ 
d_0(f(x),f(y)) \leq 2^l \cdot d_0(x,y) 
\]
for every \( x,y \in \pre{\omega}{\omega} \).

\begin{claim} \label{claim:claim1}
Fix an arbitrary \( i \in \omega \). If there is
\( 2^{l-1} < j \in \omega \) such that 
\( f(\Nbhd_{\langle i \rangle}) \cap \Nbhd_{\langle j \rangle} \neq \emptyset \), then \( f(\Nbhd_{\langle i \rangle}) \subseteq \Nbhd_{\langle j \rangle} \).
\end{claim}

\begin{proof}[Proof of the Claim]
Let \( x \in \pre{\omega}{\omega} \) be such that \( f(i {}^\smallfrown{}  x)(0) = j \), and
suppose towards a contradiction that there is \( y \in \pre{\omega}{\omega} \) such that \( f(i {}^\smallfrown{}  y)(0) \neq j   \). Then
\[ 
d_0(f(i {}^\smallfrown{} x),f(i {}^\smallfrown{}  y)) = \max \{ f(i {}^\smallfrown{}  x)(0),f(i {}^\smallfrown{} y)(0) \} \geq j > 2^{l-1}.
 \] 
But since \( d_0(i {}^\smallfrown{} x,i {}^\smallfrown{} y) \leq \frac{1}{2} \), by our choice of \( l \) we  get
\[ 
d_0(f(i {}^\smallfrown{}  x),f(i {}^\smallfrown{}  y )) \leq 2^l \cdot d_0(i {}^\smallfrown{}  x, i {}^\smallfrown{}  y) \leq 2^l \cdot \frac{1}{2} = 2^{l-1} < d_0(f(i {}^\smallfrown{} x),f(i {}^\smallfrown{}  y)),
 \] 
a contradiction.
\end{proof}

\begin{claim} \label{claim:claim2}
For every \( i \in \omega \), if \( n_{\# i}  > 2^{l-1} \) then there is \( j\in \omega \) such that \( \# j \geq  \# i \) and 
\( f(\Nbhd_{\langle i \rangle}) \subseteq \Nbhd_{\langle j \rangle} \).
\end{claim}

\begin{proof}[Proof of the Claim]
Set \( s = i {}^\smallfrown{}  (l-1) {}^\smallfrown{} 0^{(l-1)} \), so that \( \leng(s) = l + 1 \). Then \( d_0(x,y) = d(x,y) \leq 2^{-(l+1)} \) 
for every \( x,y \in \Nbhd_s \). By our choice of \( l \), it follows that \( d_0(f(x),f(y)) \leq \frac{1}{2} \), and hence 
\( f(x)(0) = f(y)(0) \) by definition of \( d_0 \). This shows that \( f(\Nbhd_s) \subseteq \Nbhd_{\langle j \rangle} \) 
for some \( j \in \omega \). 

Let \( g \colon \pre{\omega}{\omega} \to \pre{\omega}{\omega} \) be defined by
\[ 
g(x) = 
\begin{cases}
f(x) & \text{if } x \in \Nbhd_s \\
(j+1) {}^\smallfrown{} \vec{0} & \text{otherwise}.
\end{cases}%
 \] 
Then \( g \) reduces \( \psi(X) \cap \Nbhd_s \) to \( \psi(Y) \cap \Nbhd_{\langle j \rangle} \) because \( f \) is a reduction of \( \psi(X) \) 
to \( \psi(Y) \); we claim that \( g \in \Lip(d) \). Fix \( x,y \in \pre{\omega}{\omega} \). If \( x ,y \in \Nbhd_s \), then since we showed that
\( f(x)(0) = f(y)(0) \), and moreover \( x(0) = y(0) \) by \( \leng(s) = l+1 > 0 \), we get
\[ 
d(g(x),g(y)) = d(f(x),f(y)) = d_0(f(x),f(y)) \leq 2^l \cdot d_0(x,y) = 2^l \cdot d(x,y).
 \] 
If \( x,y \notin  \Nbhd_s \), then \( g(x) = g(y) \) and hence \( d(g(x),g(y)) = 0 \leq d(x,y) \). If \( x \in \Nbhd_s \) and \( y \notin \Nbhd_s \), then \( d(x,y) \geq 2^{-l} \), and hence
\[ 
d(g(x),g(y)) = 1 = 2^l \cdot 2^{-l} \leq 2^{l} \cdot d(x,y).
 \] 
The case \( x \notin \Nbhd_s \) and \( y \in \Nbhd_s \) is treated similarly. So in all cases \( d(g(x),g(y)) \leq 2^l \cdot d(x,y) \), and hence \( g \in \Lip(d) \).

Since 
\begin{align*} \psi(X) \cap \Nbhd_s & = (i {}^\smallfrown{}  A_{3 \# i + \rho_X(\# i)}) \cap \Nbhd_{i {}^\smallfrown{} (l-1) {}^\smallfrown{} 0^{(l-1)}} \\
& = \Big(i {}^\smallfrown{}  \bigoplus\nolimits_{n \in \omega} 0^{(n)} {}^\smallfrown{} A_{3 \# i + \rho_X(\# i) -1} \Big) \cap \Nbhd_{i {}^\smallfrown{} (l-1) {}^\smallfrown{} 0^{(l-1)}} \\
& = 
s {}^\smallfrown{} A_{3 \# i + \rho_X(\# i) -1} 
\end{align*}  
and 
\( \psi(Y) \cap \Nbhd_{\langle j \rangle}  = j {}^\smallfrown{} A_{3 \#j+\rho_Y(\# j)} \), it follows from d) and the fact that \( g \) witnesses 
\( \psi(X) \cap \Nbhd_s \leq_{\Lip(d)} \psi(Y) \cap \Nbhd_{\langle j \rangle} \) that 
\( A_{3\# i + \rho_X(\# i) -1} \leq_{\Lip(d)} A_{3 \#j+\rho_Y(\# j)} \). By c), 
\[ 
3 \# i + \rho_X(\# i) -1 \leq  3 \# j + \rho_Y(\# j) , 
\]
which implies \( \# i \leq \# j \).

Finally, since \( \# i \leq \# j \) obviously implies \( n_{\#i} \leq j \), we get that \( f(\Nbhd_{\langle i \rangle}) \subseteq \Nbhd_{\langle j \rangle}\) 
by \( f(\Nbhd_s) \subseteq \Nbhd_{\langle j \rangle} \), \( n_{\#i} > 2^{l-1} \), and Claim~\ref{claim:claim1}.
\end{proof}

By Claim~\ref{claim:claim1}, either \( f(\Nbhd_{\langle 0 \rangle}) \subseteq \bigcup_{i \leq 2^{l-1}} \Nbhd_{\langle i \rangle} \), or 
else \( f(\Nbhd_{\langle 0 \rangle}) \subseteq \Nbhd_{\langle j \rangle} \) for some \( j > 2^{l-1} \). Therefore, in both cases there is 
\( \bar{k} \in \omega \) such that \( n_{\bar{k}} \geq 2^l \) (hence, in particular, also \( n_{\bar{k}} > 2^{l-1} \)) and 
\( f(\Nbhd_{\langle 0 \rangle}) \subseteq \bigcup_{i \leq n_{\bar{k}}} \Nbhd_{\langle i \rangle} \): we claim that
 \( \forall k \geq \bar{k} \, (k \in X \Rightarrow k \in Y) \), so that \( X \subseteq^* Y \).

Fix \( k \geq \bar{k} \). Since \( n_k \geq n_{\bar{k}} > 2^{l-1} \) and clearly \( n_{\# n_k} = n_k \), by Claim~\ref{claim:claim2} 
applied to \( i = n_k \) there is \( j \in \omega \) such that \( \# j \geq \# i = k   \) (which implies \( n_k \leq j \)) and
 \( f( \Nbhd_{\langle n_k \rangle}) \subseteq \Nbhd_{\langle j \rangle} \). Assume towards a contradiction that \( j \geq n_{k+1} \). 
Then since \( f(\Nbhd_{\langle 0 \rangle}) \subseteq \bigcup_{i \leq n_{\bar{k}}} \Nbhd_{\langle i \rangle} \) and
 \( j \geq n_{k+1} > n_k \geq n_{\bar{k}} \geq 2^l \), we would get
\[ 
d_0(f(\vec{0}), f(n_k {}^\smallfrown{}  \vec{0})) = j \geq n_{k+1} > n_k \cdot n_k \geq 2^l \cdot n_k = 2^l \cdot d_0(\vec{0}, n_k {}^\smallfrown{}  \vec{0}),
 \] 
contradicting the choice of \( l \). Therefore \( j < n_{k+1} \), and hence \( \# j = k  = \# n_k \). Since
 \( f( \Nbhd_{\langle n_k \rangle}) \subseteq \Nbhd_{\langle j \rangle} \), arguing as in the proof of Claim~\ref{claim:claim2} 
one can  show that \( \psi(X) \cap \Nbhd_{\langle n_k \rangle} \leq_{\Lip(d)} \psi(Y) \cap \Nbhd_{\langle j \rangle} \). 
Since  by d) 
\[ 
 \psi(X) \cap \Nbhd_{\langle n_k \rangle} = n_k {}^\smallfrown{} A_{3 \# n_k + \rho_X(\# n_k)} \equiv_{\Lip(d)} A_{3 \# n_k + \rho_X(\# n_k)} 
\] 
and  
\[  
\psi(Y) \cap \Nbhd_{\langle j \rangle} = j {}^\smallfrown{} A_{3 \# j + \rho_Y(\# j)} \equiv_{\Lip(d)} A_{3 \# j + \rho_Y(\# j)} ,
\]
 this implies that 
\[ 
3k + \rho_X(k) = 3 \# n_k + \rho_X(\# n_k) \leq 3 \# j + \rho_Y(\# j) = 3k + \rho_Y(k)
\]
by c). Therefore \( \rho_X(k) \leq \rho_Y(k) \), and hence we are done.
\end{proof}

\begin{corollary} \label{cor:maind_0}
The partial order \( ( \pow(\omega), \subseteq^*) \) can be embedded into both%
\footnote{The closure of \( \boldsymbol{\Delta}^0_1 \) under both \( \equiv_{\L(d_0)} \) and \( \equiv_{\Lip(d_0)} \) follows from the fact that 
the metric \( d_0 \) is compatible with the topology on \( \pre{\omega}{\omega} \) by Proposition~\ref{prop:d_0compatible}, and hence \( \L(d_0), \Lip(d_0) \subseteq \W \) --- see also Proposition~\ref{prop:inclusiond_0}.}
\( \Deg_{\boldsymbol{\Delta}^0_1}(\L(d_0)) \) and \( \Deg_{\boldsymbol{\Delta}^0_1}(\Lip(d_0)) \). In particular, both degree-structures contain antichains of size \( \pre{\omega}{2} \) (in the sense of Corollary~\ref{cor5}) and infinite descending chains.
\end{corollary}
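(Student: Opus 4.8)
The plan is to derive everything directly from Theorem~\ref{th:maind_0} by feeding it a clopen witness. First I would take \( A = \Nbhd_{\langle 0 \rangle} \) (or any nontrivial clopen set): since \( A \) and \( \neg A \) are both nontrivial clopen, one has \( A \equiv_\W \neg A \) via the obvious two-valued continuous reductions, so \( A \) is \( \W \)-selfdual, and this needs nothing beyond \( \ZF + \DC(\RR) \). Applying Theorem~\ref{th:maind_0} to this \( A \) yields a map \( \psi \colon \pow(\omega) \to [A]_\W \). Because every \( \psi(X) \) satisfies \( \psi(X) \leq_\W A \) with \( A \) clopen, and continuous preimages of clopen sets are clopen, each \( \psi(X) \) lies in \( \boldsymbol{\Delta}^0_1 \); hence the degrees \( [\psi(X)]_{\L(d_0)} \) and \( [\psi(X)]_{\Lip(d_0)} \) are genuine elements of \( \Deg_{\boldsymbol{\Delta}^0_1}(\L(d_0)) \) and \( \Deg_{\boldsymbol{\Delta}^0_1}(\Lip(d_0)) \) respectively; note that \( \boldsymbol{\Delta}^0_1 \) is closed under \( \equiv_{\L(d_0)} \) and \( \equiv_{\Lip(d_0)} \) because \( \L(d_0), \Lip(d_0) \subseteq \W \) by Proposition~\ref{prop:d_0compatible}.

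Next I would check that \( \psi \) induces an order-embedding. Using \( \L(d_0) \subseteq \Lip(d_0) \) together with clauses (1) and (2) of Theorem~\ref{th:maind_0}, I get the chain of implications
\[
X \subseteq^* Y \ \Rightarrow\ \psi(X) \leq_{\L(d_0)} \psi(Y) \ \Rightarrow\ \psi(X) \leq_{\Lip(d_0)} \psi(Y) \ \Rightarrow\ X \subseteq^* Y,
\]
so that all three displayed conditions are in fact equivalent. In particular \( X \subseteq^* Y \iff \psi(X) \leq_{\L(d_0)} \psi(Y) \iff \psi(X) \leq_{\Lip(d_0)} \psi(Y) \). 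Reading these biconditionals symmetrically shows that \( X =^* Y \) (i.e.\ \( X \subseteq^* Y \subseteq^* X \)) implies both \( \psi(X) \equiv_{\L(d_0)} \psi(Y) \) and \( \psi(X) \equiv_{\Lip(d_0)} \psi(Y) \), so the assignments \( X \mapsto [\psi(X)]_{\L(d_0)} \) and \( X \mapsto [\psi(X)]_{\Lip(d_0)} \) factor through \( =^* \) and define order-preserving, order-reflecting maps; that is, embeddings of the quasi-order \( (\pow(\omega),\subseteq^*) \) into the two degree-structures.

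Finally, for the ``in particular'' clause I would simply transport two standard features of \( (\pow(\omega),\subseteq^*) \) through these embeddings. For the antichains, fix an almost disjoint family \( \{ S_z \mid z \in \pre{\omega}{2} \} \) of infinite subsets of \( \omega \) of size \( \pre{\omega}{2} \): for \( z \neq z' \) the set \( S_z \cap S_{z'} \) is finite, whence \( S_z \setminus S_{z'} \) is infinite and \( S_z \nsubseteq^* S_{z'} \) (and symmetrically), so the \( S_z \) form a \( \subseteq^* \)-antichain; applying the order-reflecting embedding to \( z \mapsto \psi(S_z) \) produces an antichain of size \( \pre{\omega}{2} \) in the sense of Corollary~\ref{cor5}. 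For the descending chains, partition \( \omega \) into infinitely many infinite pieces \( \langle B_n \mid n \in \omega \rangle \) and set \( X_n = \bigcup_{k \geq n} B_k \); then \( X_{n+1} \subseteq X_n \) gives \( X_{n+1} \subseteq^* X_n \), while \( X_n \setminus X_{n+1} = B_n \) is infinite gives \( X_n \nsubseteq^* X_{n+1} \), so \( \langle X_n \mid n \in \omega \rangle \) is \( \subseteq^* \)-strictly decreasing and its image is an infinite descending chain in each degree-structure.

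The bulk of the work has already been absorbed into Theorem~\ref{th:maind_0}, so the only genuine care needed here is (a) to pick the witness \( A \) clopen so that the whole \( \W \)-degree \( [A]_\W \) sits inside \( \boldsymbol{\Delta}^0_1 \), and (b) to combine clauses (1) and (2) with the inclusion \( \L(d_0) \subseteq \Lip(d_0) \) so that a single biconditional simultaneously handles both the nonexpansive and the Lipschitz versions; the existence of the requisite antichains and descending chains in \( (\pow(\omega),\subseteq^*) \) is classical.
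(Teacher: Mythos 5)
Your derivation is correct and is exactly the intended route: the paper states Corollary~\ref{cor:maind_0} as an immediate consequence of Theorem~\ref{th:maind_0}, obtained precisely by applying the theorem to a nontrivial clopen (hence \( \W \)-selfdual) set so that \( [A]_\W \subseteq \boldsymbol{\Delta}^0_1 \), and then chaining clause (1), the inclusion \( \L(d_0) \subseteq \Lip(d_0) \), and clause (2) into a single biconditional. Your transport of an almost disjoint family and of a strictly \( \subseteq^* \)-decreasing sequence through the resulting order-embedding is the standard way to extract the antichains and descending chains, so nothing is missing.
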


Using the Parovicenko's result~\cite{Parovivcenko:1963} that (under \( \AC \)) all partial orders of size \( \aleph_1 \) embed into \( ( \pow(\omega), \subseteq^*) \), we get the following corollary.

\begin{corollary}
Assume \( \AC \). Then every partial order of size \( \aleph_1 \) can be embedded into both
 \( \Deg_{\boldsymbol{\Delta}^0_1}(\L(d_0)) \) and \( \Deg_{\boldsymbol{\Delta}^0_1}(\Lip(d_0)) \).
\end{corollary}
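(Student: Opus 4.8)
The plan is to obtain the statement by composing two order-embeddings that are already at our disposal. Fix a partial order $P$ with $|P| = \aleph_1$. By Parovicenko's theorem, invoked under $\AC$, there is an order-embedding $e \colon P \to (\pow(\omega), \subseteq^*)$, meaning that $p \leq_P q \iff e(p) \subseteq^* e(q)$ for all $p, q \in P$. On the other side, Corollary~\ref{cor:maind_0} furnishes a map $\psi$ with all values in $\boldsymbol{\Delta}^0_1$ whose induced map on degrees embeds $(\pow(\omega), \subseteq^*)$ simultaneously into $\Deg_{\boldsymbol{\Delta}^0_1}(\L(d_0))$ and into $\Deg_{\boldsymbol{\Delta}^0_1}(\Lip(d_0))$. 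The embedding required by the corollary will then be the composite $p \mapsto [\psi(e(p))]$, taken in whichever of the two degree-structures we wish.

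First I would recall explicitly why $\psi$ is an order-embedding for both reducibilities at the same time, since this is the one point where the two parts of Theorem~\ref{th:maind_0} must be combined (and it is the mechanism underlying Corollary~\ref{cor:maind_0}). Part~(1) gives $X \subseteq^* Y \Rightarrow \psi(X) \leq_{\L(d_0)} \psi(Y)$, and since $\L(d_0) \subseteq \Lip(d_0)$ this also yields $\psi(X) \leq_{\Lip(d_0)} \psi(Y)$; part~(2) closes the loop with $\psi(X) \leq_{\Lip(d_0)} \psi(Y) \Rightarrow X \subseteq^* Y$. Chaining these implications gives
\[
X \subseteq^* Y \iff \psi(X) \leq_{\L(d_0)} \psi(Y) \iff \psi(X) \leq_{\Lip(d_0)} \psi(Y),
\]
so passing to degrees produces, in each of the two structures, an order-embedding of $(\pow(\omega), \subseteq^*)$.

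It then remains only to compose. For $p, q \in P$ one reads off
\[
p \leq_P q \iff e(p) \subseteq^* e(q) \iff [\psi(e(p))] \leq [\psi(e(q))],
\]
the last equivalence holding in either $\Deg_{\boldsymbol{\Delta}^0_1}(\L(d_0))$ or $\Deg_{\boldsymbol{\Delta}^0_1}(\Lip(d_0))$, so $p \mapsto [\psi(e(p))]$ is an order-embedding of $P$. Injectivity is automatic because the target is a partial order: $[\psi(e(p))] = [\psi(e(q))]$ forces $p \leq_P q$ and $q \leq_P p$, hence $p = q$. I do not expect any genuine obstacle here --- the corollary is a purely formal composition of order-embeddings --- and the only things worth double-checking are bookkeeping: that Parovicenko's $e$ is an order-embedding in both directions (not merely monotone), and that $\psi$ respects $=^*$ so that the displayed equivalences pass correctly to $\subseteq^*$-classes. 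The role of $\AC$ is confined to supplying Parovicenko's theorem.
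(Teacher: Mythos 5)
Your proof is correct and follows exactly the paper's route: the paper derives this corollary by composing Parovicenko's theorem (under \( \AC \)) with the embedding of \( (\pow(\omega),\subseteq^*) \) into both degree-structures given by Corollary~\ref{cor:maind_0}. The extra bookkeeping you include (that \( \psi \) embeds for both reducibilities simultaneously via \( \L(d_0)\subseteq\Lip(d_0) \) and part (2) of Theorem~\ref{th:maind_0}) is precisely the content already packaged in Corollary~\ref{cor:maind_0}, so nothing further is needed.
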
%

For what concerns the mutual relationships with respect to inclusion of the classes of functions related to \( d_0 \) and \( d \) considered above, we have the following full description.

\begin{proposition} \label{prop:inclusiond_0}
\begin{enumerate}[(1)]
\item
\(  \c(d) \subsetneq \L(d_0) \subsetneq \L(d)  \)
\item
\( \L(d) \not\subseteq \Lip(d_0) \subsetneq \Lip(d) \subsetneq \UCont(d_0) = \UCont(d) \).
\end{enumerate}%
\end{proposition}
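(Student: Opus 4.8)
The plan is to treat each link in the two chains separately, leaning throughout on the two facts recorded immediately before the statement: the dichotomy ($\dagger$), which says $d(x,y) < 1 \iff d_0(x,y) < 1$ with the two distances \emph{equal} in that range, and the global inequality $d(x,y) \le d_0(x,y)$. The conceptual upshot of ($\dagger$) is that $d$ and $d_0$ are literally the same metric on every pair lying in a common block $\Nbhd_{\langle n \rangle}$ (and on every pair at distance $<1$), so all discrepancy is concentrated on pairs $x,y$ with $x(0) \neq y(0)$, where $d(x,y)=1$ but $d_0(x,y) = \max\{x(0),y(0)\}$ can be arbitrarily large. Almost every inclusion then reduces to the case split on whether $x(0) = y(0)$.

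For part (1) I would argue as follows. For $\c(d) \subseteq \L(d_0)$, a $d$-contraction satisfies $d(f(x),f(y)) \le \frac12 d(x,y) < 1$, so by ($\dagger$) the images lie in a common block and $d_0(f(x),f(y)) = d(f(x),f(y)) \le \frac12 d(x,y) \le d_0(x,y)$ in both cases of the split. For $\L(d_0) \subseteq \L(d)$: when $x(0)=y(0)$ everything happens inside a block where $d = d_0$, and when $x(0) \neq y(0)$ the target $d(f(x),f(y)) \le 1 = d(x,y)$ is automatic. Strictness is then witnessed by explicit maps: $\id \in \L(d_0) \setminus \c(d)$, while the first-coordinate successor $\sigma(x) = \langle x(0)+1 \rangle {}^\smallfrown{} \langle x(n) \mid n \ge 1 \rangle$ is a $d$-isometry, hence lies in $\L(d)$, but sends $\Nbhd_{\langle 0 \rangle}$ and $\Nbhd_{\langle 1 \rangle}$ into blocks $1,2$, inflating a $d_0$-distance of $1$ into $2$, so $\sigma \notin \L(d_0)$.

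For the Lipschitz portion of (2) the driving mechanism is that any block-respecting map $f(n {}^\smallfrown{} y) = \phi(n) {}^\smallfrown{} y$ is automatically a $d$-isometry (hence $d$-Lipschitz), whereas its $d_0$-distortion tracks the growth of $\phi$. Taking $\phi(n) = n^2$ (with $\phi(0)=0$) yields a \emph{single} witness for both $\L(d) \not\subseteq \Lip(d_0)$ and the strictness of $\Lip(d_0) \subsetneq \Lip(d)$: for $x \in \Nbhd_{\langle n \rangle}$ and $y = \vec{0}$ the ratio $d_0(f(x),f(y)) / d_0(x,y) = n^2/n = n$ is unbounded, so $f \in \L(d) \subseteq \Lip(d)$ but $f \notin \Lip(d_0)$. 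The remaining inclusion $\Lip(d_0) \subseteq \Lip(d)$ is again the case split: with constant $L \ge 1$, inside a block $d \le d_0$ gives $d(f(x),f(y)) \le L\,d(x,y)$, and across blocks $d(f(x),f(y)) \le 1 = d(x,y) \le L\,d(x,y)$.

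The step I expect to carry the real weight is the equality $\UCont(d_0) = \UCont(d)$. The clean route is to observe that $d$ and $d_0$ are \emph{uniformly equivalent}: the identity $(\pre{\omega}{\omega},d) \to (\pre{\omega}{\omega},d_0)$ is uniformly continuous because by ($\dagger$) every $d$-ball of radius at most $1$ coincides with the $d_0$-ball of the same radius, and the identity $(\pre{\omega}{\omega},d_0) \to (\pre{\omega}{\omega},d)$ is $1$-Lipschitz by $d \le d_0$. Since uniform continuity is stable under composition, factoring $f$ as $(\pre{\omega}{\omega},d_0) \xrightarrow{\id} (\pre{\omega}{\omega},d) \xrightarrow{f} (\pre{\omega}{\omega},d) \xrightarrow{\id} (\pre{\omega}{\omega},d_0)$, and symmetrically, delivers both inclusions at once. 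Finally, $\Lip(d) \subsetneq \UCont(d_0)$ follows by combining this equality with $\Lip(d) \subseteq \UCont(d)$ and the classical proper inclusion $\Lip(d) \subsetneq \UCont(d)$ from part iii) of Remark~\ref{rmk:definitionfunctions}, which supplies a uniformly continuous non-Lipschitz map witnessing strictness.
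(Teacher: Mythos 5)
Your proposal is correct and follows essentially the same route as the paper: the same case split on whether $x(0)=y(0)$ driven by ($\dagger$) and $d\le d_0$, the same witnesses for strictness (the first-coordinate successor for $\L(d)\setminus\L(d_0)$, and a block map with quadratically growing first coordinate for $\L(d)\not\subseteq\Lip(d_0)$), and the same derivation of $\UCont(d_0)=\UCont(d)$ from ($\dagger$), which you merely spell out in more detail as a uniform equivalence of the two metrics.
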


\begin{proof}
(1) Fix \( f \in \c(d) \subseteq \L(d) \). Since \( d(f(x),f(y)) < 1 \) for every \( x,y \in \pre{\omega}{\omega} \), it follows from ($\dagger$) that \( d(f(x),f(y)) = d_0(f(x),f(y)) \). Therefore, for every \( x,y \in \pre{\omega}{\omega} \) we have 
\[ 
d_0(f(x),f(y)) = d(f(x),f(y)) \leq d(x,y) \leq d_0(x,y),
 \] 
whence \( f \in \L(d_0) \). 

To show \( \L(d_0) \subseteq \L(d) \), let \( f \in \L(d_0) \) and let \(x,y \in \pre{\omega}{\omega} \). If \( x(0) \neq y(0) \), then \( d(x,y) = 1 \) and trivially \( d(f(x),f(y)) \leq d(x,y) \). If instead \( x(0) = y(0) \), then \( d(x,y) = d_0(x,y) \), whence
\[ 
d(f(x),f(y)) \leq d_0(f(x),f(y)) \leq d_0(x,y) = d(x,y).
 \] 

Finally, both inclusions are proper because the functions \( f =  \id_{\pre{\omega}{\omega}} \) and \( g \colon \pre{\omega}{\omega} \to \pre{\omega}{\omega}\colon n {}^\smallfrown{}  x \mapsto (n+1) {}^\smallfrown{}  x \) are in \( \L(d_0) \setminus \c(d) \) and \(\L(d) \setminus \L(d_0) \), respectively.

(2) Consider the function \( f \colon  \pre{\omega}{\omega} \to \pre{\omega}{\omega} \colon n {}^\smallfrown{} x \mapsto (n^2+1) {}^\smallfrown{} x \). Clearly \( f \in \L(d) \), and since for every \( 0 \neq  n \in \omega \)
\[ 
d_0(f(\vec{0}), f(n {}^\smallfrown{}  \vec{0})) = n^2 + 1 > n \cdot n  = n \cdot d_0(\vec{0}, n {}^\smallfrown{}  \vec{0}),
\] 
we get \( f \notin \Lip(d_0) \). This shows \( \L(d) \not\subseteq \Lip(d_0) \), and hence also \( \Lip(d) \not\subseteq \Lip(d_0) \) by \( \L(d) \subseteq \Lip(d) \).

The inclusion \( \Lip(d_0) \subseteq \Lip(d) \) can be proved similarly to the inclusion \( \L(d_0) \subseteq \L(d) \) above, while \( \UCont(d_0) = \UCont(d) \) follows directly from ($\dagger$). Since clearly \( \Lip(d) \subsetneq \UCont(d) \), we are done.
\end{proof}

\begin{corollary}[\( \AD^\L + \BP \)]
The \( \UCont(d_0) \)-hierarchy is isomorphic to the \( \L(d) \)-hierarchy (see Theorem~\ref{th:Lhierarchy} and Figure~\ref{fig:Lhierarchy}), and in fact it coincides with both the \( \UCont(d) \)- and the \( \Lip(d) \)-hierarchy.
\end{corollary}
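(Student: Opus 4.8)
The plan is to reduce the statement entirely to facts already available, since almost all the work has been done. The crucial input is Proposition~\ref{prop:inclusiond_0}(2), which asserts the \emph{equality} of function classes \( \UCont(d_0) = \UCont(d) \) (itself an immediate consequence of observation ($\dagger$), namely that \( d \) and \( d_0 \) agree on all distances \( < 1 \), so that the two metrics induce the same small-scale uniform structure and hence exactly the same uniformly continuous self-maps of \( \pre{\omega}{\omega} \)). Because the reducibility preorder \( \leq_\F \) depends only on the \emph{set} \( \F \) of admissible reductions, the equality \( \UCont(d_0) = \UCont(d) \) immediately gives that \( \leq_{\UCont(d_0)} \) and \( \leq_{\UCont(d)} \) are literally the same relation on \( \pow(\pre{\omega}{\omega}) \). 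Consequently the equivalence \( \equiv_{\UCont(d_0)} \) coincides with \( \equiv_{\UCont(d)} \), the \( \UCont(d_0) \)-degrees are the very same objects as the \( \UCont(d) \)-degrees, and the induced partial orders coincide as well; this already yields the asserted coincidence of the \( \UCont(d_0) \)-hierarchy with the \( \UCont(d) \)-hierarchy, and this step needs nothing beyond \( \ZF + \DC(\RR) \).

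For the two remaining identifications I would invoke the results recalled in Section~\ref{sec:definitions} (established in~\cite{MottoRos:2010}): under \( \AD^\L + \BP \) one has the equality of degree-structures \( (\Deg(\UCont(d)),\leq) = (\Deg(\Lip(d)),\leq) \)---remarkably, despite the proper inclusion \( \Lip(d) \subsetneq \UCont(d) \) recorded in Proposition~\ref{prop:inclusiond_0}(2)---and this common structure is isomorphic to the \( \L(d) \)-hierarchy of Theorem~\ref{th:Lhierarchy}. Chaining the three facts yields the chain \( \UCont(d_0)\text{-hierarchy} = \UCont(d)\text{-hierarchy} = \Lip(d)\text{-hierarchy} \cong \L(d)\text{-hierarchy} \), which is exactly the statement.

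There is no real obstacle here: the genuinely nontrivial content---the coincidence of the Lipschitz and uniformly continuous hierarchies---lives in~\cite{MottoRos:2010} and is cited wholesale, while the only determinacy hypothesis (\( \AD^\L + \BP \)) enters solely through that citation. The one point requiring a modicum of care is purely bookkeeping: one must observe that equality of the underlying function classes produces \emph{equality}, not merely isomorphism, of the induced degree-structures, so that the word ``coincides'' in the statement is justified and is genuinely stronger than the ``isomorphic'' relation used for the comparison with the \( \L(d) \)-hierarchy. I would state this distinction explicitly to forestall any conflation of the two.
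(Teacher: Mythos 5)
Your argument is correct and is exactly the one the paper intends: the corollary is stated without proof precisely because it follows immediately from the equality \( \UCont(d_0) = \UCont(d) \) of Proposition~\ref{prop:inclusiond_0}(2) (which forces literal coincidence of the induced preorders and hence of the degree-structures) together with the facts recalled in Section~\ref{sec:definitions} that, under \( \AD^\L + \BP \), \( (\Deg(\Lip),\leq) = (\Deg(\UCont),\leq) \) and both are isomorphic to the \( \L \)-hierarchy. Your explicit remark distinguishing equality of degree-structures from mere isomorphism is a sensible clarification but does not change the substance.
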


Since obviously \( \L(d_0) \subsetneq \Lip(d_0) \), from Proposition~\ref{prop:inclusiond_0} and Corollary~\ref{cor:maind_0} we 
get that both \( \L(d_0) \) and \( \Lip(d_0) \) are examples of  classes of functions which are much larger than the set of 
contractions \( \c = \c(d) \) considered in Section~\ref{sec:contractions}, but which still miss the crucial condition of 
containing \( \L = \L(d) \), and in fact they induce very bad degree-structures.

Slightly modifying the definition of the metric \( d_0 \), we can get a closely related metric \( d_1 \) which is much closer to the standard metric \( d \),
in the sense that in this case \( \Lip(d_1) = \Lip(d) \) (while keeping the conditions \( \L(d_1) \subsetneq \L(d) \) and \( \UCont(d_1) = \UCont(d) \)). 
We will see that also in this case the degree-structure \( \Deg(\L(d_1)) \) is very bad, while the degree-structures \( \Deg(\Lip(d_1)) \) and 
\( \Deg(\UCont(d_1)) \) both coincide with \( \Deg(\Lip(d)) = \Deg(\UCont(d)) \) (and are therefore isomorphic to the classical \( \L(d) \)-hierarchy described in Theorem~\ref{th:Lhierarchy}, see Figure~\ref{fig:Lhierarchy}) when assuming \( \AD^\L +\BP \).

\begin{definition}
Let \( d_1 \colon (\pre{\omega}{\omega})^2 \to \RR^+ \) be the metric on \( \pre{\omega}{\omega} \) defined  by:
\[ 
d_1(x,y) = 
\begin{cases}
0 & \text{if } x = y \\
d(x,y) & \text{if } x(0) = y(0)\\
2 - 2^{-(\max \{ x(0),y(0) \}-1)} & \text{if } x(0) \neq y(0) .
\end{cases}%
 \] 
\end{definition} 

\noindent
So the metric space \( (\pre{\omega}{\omega},d_1) \) is constructed exactly as the space \( (\pre{\omega}{\omega},d_0) \) except for the fact 
that we modify the distances used to ``glue'' together the subspaces \( \Nbhd_{\langle n \rangle} \) of \( (\pre{\omega}{\omega},d) \) in such a way that 
they form a bounded set.

Clearly,  ($\dagger$) remains true also after replacing \( d_0 \) with \( d_1 \), it is still the case that \( d(x,y) \leq d_1(x,y) \) for every \( x,y \in \pre{\omega}{\omega} \), and arguing as in Proposition~\ref{prop:d_0compatible} one sees that
\( d_1 \) is a complete ultrametric compatible with the topology of \( \pre{\omega}{\omega} \). 

\begin{proposition} \label{prop:inclusiond_1}
\begin{enumerate}[(1)]
\item
\(  \L(d_1) = \L(d_0)  \). Therefore \( \c(d) \subsetneq \L(d_1) \subsetneq \L(d) \).
\item
\( \Lip(d_1)  =  \Lip(d) \) and \( \UCont(d_1) = \UCont(d) \).
\end{enumerate}%
\end{proposition}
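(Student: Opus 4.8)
The plan is to exploit that $d$, $d_0$, and $d_1$ differ only on \emph{cross-block} pairs, i.e.\ pairs $x,y$ with $x(0) \neq y(0)$: by ($\dagger$) all three metrics agree (and equal $d$) on \emph{within-block} pairs, where $x(0) = y(0)$ and the distance is $< 1$. The two parts then hinge on two different features of $d_1$. For part (2) the decisive point is that, unlike $d_0$, the metric $d_1$ is \emph{bounded}: its cross-block distances $2 - 2^{-(\max\{x(0),y(0)\}-1)}$ lie in $[1,2)$, so that $d \leq d_1 \leq 2d$ everywhere. For part (1) the decisive point is instead a feature $d_1$ shares with $d_0$: on cross-block pairs both distances are \emph{strictly increasing} functions of $\max\{x(0),y(0)\}$ taking values $\geq 1$.

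For part (1), I would exhibit a single strictly increasing $h \colon \RR^+ \to \RR^+$ with $h(0)=0$ such that $d_1 = h \circ d_0$. Concretely, set $h(t) = t$ on the within-block values $t \in \{0\} \cup \{2^{-n} \mid n \geq 1\}$ and $h(M) = 2 - 2^{-(M-1)}$ on the cross-block values $M \in \{1,2,3,\dots\}$; the only thing to check is that $h$ is strictly increasing on the range of $d_0$, which holds because the within-block values are all $\leq \tfrac{1}{2}$ while the cross-block values are all $\geq 1 = h(1)$, so the ``gap'' is respected. Once $d_1 = h \circ d_0$ with $h$ strictly increasing on $\operatorname{ran}(d_0)$, for \emph{any} $f$ and any $x,y$ one has $d_1(f(x),f(y)) \leq d_1(x,y) \iff d_0(f(x),f(y)) \leq d_0(x,y)$, since applying the order-isomorphism $h$ (or its inverse) to both sides preserves the inequality. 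Quantifying over all $x,y$ yields $\L(d_1) = \L(d_0)$, and the displayed chain $\c(d) \subsetneq \L(d_1) \subsetneq \L(d)$ then follows by substituting this equality into Proposition~\ref{prop:inclusiond_0}(1).

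For part (2), I would first record $d \leq d_1 \leq 2d$ (equality on within-block pairs; on cross-block pairs $d=1$ while $1 \leq d_1 < 2$). The equality $\Lip(d_1) = \Lip(d)$ is then immediate by absorbing the factor $2$ into the Lipschitz constant: if $f$ is $L$-Lipschitz for $d$ then $d_1(f(x),f(y)) \leq 2 d(f(x),f(y)) \leq 2L\, d(x,y) \leq 2L\, d_1(x,y)$, and symmetrically, using $d \leq d_1$, for the reverse inclusion. For $\UCont(d_1) = \UCont(d)$ one can argue either from the same bi-Lipschitz bound (bi-Lipschitz equivalent metrics are uniformly equivalent) or, exactly as in Proposition~\ref{prop:inclusiond_0} for $d_0$, directly from ($\dagger$), which continues to hold for $d_1$.

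The genuinely non-routine step is the identification $\L(d_1) = \L(d_0)$ in part (1); everything in part (2) is bi-Lipschitz bookkeeping. The point worth stressing is conceptual rather than computational: nonexpansiveness is insensitive to the actual numerical values of a metric and depends only on the \emph{linear order} it induces on its set of distances. Since $d_0$ and $d_1$ induce the same order on distances (this is precisely what $d_1 = h \circ d_0$ encodes), they share the same nonexpansive maps, even though they are \emph{not} bi-Lipschitz equivalent ($d_0$ being unbounded) and so genuinely differ at the level of $\Lip$. The same viewpoint also explains why the analogue fails for $d$ itself: the cross-block distances of $d$ are all equal to $1$, so $d$ imposes no order on cross-block pairs, whereas $d_0$ and $d_1$ do.
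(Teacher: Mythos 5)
Your proof is correct. For part (1) you take exactly the paper's route: the paper likewise exhibits an order-preserving map \( i \) (your \( h \)) with \( i(0)=0 \), \( i(2^{-(n+1)})=2^{-(n+1)} \), \( i(n+1)=2-2^{-n} \), and \( d_1 = i \circ d_0 \), and concludes \( \L(d_1)=\L(d_0) \) by transporting the inequality defining nonexpansiveness through \( i \); your explicit check that the within-block values (all \( \leq \tfrac12 \)) sit strictly below the cross-block values (all \( \geq 1 \)) is the point the paper leaves implicit. For part (2) you take a genuinely shorter route. The paper never isolates the two-sided bound \( d \leq d_1 \leq 2d \); instead, for \( \Lip(d)\subseteq\Lip(d_1) \) it splits into the cases \( x\restriction(l+1)=y\restriction(l+1) \) (where the analogue of ($\dagger$) forces \( d_1=d \) on both sides) and \( x\restriction(l+1)\neq y\restriction(l+1) \) (where \( d_1(x,y)\geq 2^{-l} \) and boundedness of \( d_1 \) by \( 2 \) are invoked), arriving at the constant \( 2^{l+1} \), and it derives \( \UCont(d_1)=\UCont(d) \) separately from ($\dagger$). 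Your single observation that \( d_1 \) is globally bi-Lipschitz equivalent to \( d \) with ratio \( 2 \) subsumes both computations at once, and it also makes the contrast with \( d_0 \) (unbounded, hence not bi-Lipschitz equivalent to \( d \), hence \( \Lip(d_0)\subsetneq\Lip(d) \)) more transparent. Both arguments ultimately rest on the same fact --- boundedness of the cross-block distances of \( d_1 \) --- so the gain is in economy rather than substance.
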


\begin{proof}
(1) Use the fact that the map \( i \) defined by \( i(0) = 0 \), \( i(2^{-(n+1)}) = 2^{-(n+1)} \), and \( i(n+1) = 2 - 2^{-n} \) 
is an order-preserving map such that \( d_1(x,y) = i(d_0(x,y)) \) for every \(x,y \in \pre{\omega}{\omega} \).

(2) The equality \( \UCont(d_1) = \UCont(d) \) follows again from the analogues of ($\dagger$) with \( d_0 \) replaced by \( d_1 \), so it remains only 
to show that \( \Lip(d_1)  =  \Lip(d) \). 

Assume first that \( f \in \Lip(d) \) and let \( l \in \omega \) be such that \( d(f(x),f(y)) \leq 2^l \cdot d(x,y) \) for every 
\( x,y \in \pre{\omega}{\omega} \). We consider two cases: if \( x \restriction (l+1) = y \restriction (l+1) \), then \( d(x,y) \leq 2^{-(l+1)} \), 
and hence \( d(f(x),f(y)) \leq 2^l \cdot 2^{-(l+1)} = \frac{1}{2} \). By the analogous of ($\dagger$) for \( d_1 \) and the definition of \( d_1 \), this means that both 
\( d_1(x,y) = d(x,y) \) and \( d_1(f(x),f(y)) = d(f(x),f(y)) \), whence \( d_1(f(x),f(y))  \leq 2^l \cdot d_1(x,y) \). If instead
 \( x \restriction (l+1) \neq y \restriction (l+1) \), then \( d_1(x,y) \geq 2^{-l} \), and since all distances realized by \( d_1 \) are
 bounded by \(  2 \) we get that 
\[ 
d_1(f(x),f(y)) \leq 2  = 2^{l+1} \cdot 2^{-l} \leq 2^{l+1} \cdot d_1(x,y) .
\] 
Therefore 
\( d_1(f(x),f(y)) \leq 2^{l+1} \cdot d_1(x,y) \) for every \( x,y \in \pre{\omega}{\omega} \), and hence \( f \in \Lip(d_1) \). 
This shows \( \Lip(d) \subseteq \Lip(d_1) \). The inclusion \( \Lip(d_1) \subseteq \Lip(d) \) can be proved in a similar way (or, alternatively, using the argument contained in the proof of \( \Lip(d_0) \subseteq \Lip(d) \) in Proposition~\ref{prop:inclusiond_0}), hence we are done.
\end{proof}

From Proposition~\ref{prop:inclusiond_1} and Theorem~\ref{th:maind_0} we immediately get the following result.

\begin{theorem} \label{th:maind_1}
Let \( A \subseteq \pre{\omega}{\omega} \) be a \( \W \)-selfdual set. Then there is a map \( \psi \colon \pow(\omega) \to [A]_\W \) such that for every \( X,Y \subseteq \omega \)
\[ 
X \subseteq^* Y \iff \psi(X) \leq_{\L(d_1)} \psi(Y).
 \] 
In particular, \( ( \pow(\omega), \subseteq^*) \) embeds into
\( \Deg_{\boldsymbol{\Delta}^0_1}(\L(d_1)) \), and  hence \( \Deg_{\boldsymbol{\Delta}^0_1}(\L(d_1)) \) contains both antichains of size \( \pre{\omega}{2} \) (in the sense of Corollary~\ref{cor5}) and infinite descending chains.
\end{theorem}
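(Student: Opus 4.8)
The plan is to reuse verbatim the map $\psi \colon \pow(\omega) \to [A]_\W$ furnished by Theorem~\ref{th:maind_0}, and to collapse its two separate one-directional conclusions into the single biconditional asserted here by feeding in the identifications of function classes from Proposition~\ref{prop:inclusiond_1}. The key structural point is that Theorem~\ref{th:maind_0} was stated asymmetrically, with the \emph{smaller} class $\L(d_0)$ on the reduction side (1) and the \emph{larger} class $\Lip(d_0)$ on the anti-reduction side (2); since $\L(d_1) = \L(d_0)$ sits between these two, both halves can be driven by the single hypothesis $\leq_{\L(d_1)}$.

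Concretely, for the forward implication I would argue that if $X \subseteq^* Y$ then $\psi(X) \leq_{\L(d_0)} \psi(Y)$ by Theorem~\ref{th:maind_0}(1), and this is literally $\psi(X) \leq_{\L(d_1)} \psi(Y)$ because $\L(d_1) = \L(d_0)$ by Proposition~\ref{prop:inclusiond_1}(1). For the backward implication, from $\psi(X) \leq_{\L(d_1)} \psi(Y)$ the same equality gives $\psi(X) \leq_{\L(d_0)} \psi(Y)$, and then the trivial inclusion $\L(d_0) \subseteq \Lip(d_0)$ (a nonexpansive map is Lipschitz with constant $1$) upgrades this to $\psi(X) \leq_{\Lip(d_0)} \psi(Y)$; now Theorem~\ref{th:maind_0}(2) yields $X \subseteq^* Y$. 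This settles the biconditional.

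For the ``In particular'' clause I would specialize $A$ to a nontrivial clopen set, which is $\boldsymbol{\Delta}^0_1$ and $\W$-selfdual (the first $\W$-selfdual degree consists of exactly such sets). Since $\boldsymbol{\Delta}^0_1$ is closed under $\equiv_\W$, every value $\psi(X) \in [A]_\W$ is again clopen, so $\psi$ maps into $\boldsymbol{\Delta}^0_1$; and since $d_1$ is compatible with the topology we have $\L(d_1) \subseteq \W$, so $\boldsymbol{\Delta}^0_1$ is closed under $\equiv_{\L(d_1)}$ and $\Deg_{\boldsymbol{\Delta}^0_1}(\L(d_1))$ is well-defined. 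The biconditional then says exactly that $X \mapsto [\psi(X)]_{\L(d_1)}$ is an embedding of the preorder $(\pow(\omega), \subseteq^*)$ into $\Deg_{\boldsymbol{\Delta}^0_1}(\L(d_1))$. The two advertised consequences are inherited from standard features of $(\pow(\omega), \subseteq^*)$: an almost disjoint family of size $\pre{\omega}{2}$ is a $\subseteq^*$-antichain, and the tails $X_n = \bigcup_{m \geq n} Y_m$ of a partition of $\omega$ into infinitely many infinite pieces $Y_m$ form an infinite strictly $\subseteq^*$-decreasing sequence; both are transported by $\psi$ into $\Deg_{\boldsymbol{\Delta}^0_1}(\L(d_1))$.

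I do not expect any genuine obstacle: all the combinatorial content (the construction of $\psi$ from the sequence $\langle A_m \mid m \in \omega \rangle$ and the $n_k$-blocks, together with both Claims) was already discharged in the proof of Theorem~\ref{th:maind_0}. The only step requiring a moment's attention is the mismatch between the hypothesis, stated for $\L(d_1)$-reductions, and the anti-reduction half of Theorem~\ref{th:maind_0}, stated for the strictly larger class $\Lip(d_0)$; the deduction succeeds precisely because $\L(d_0)$ is sandwiched as $\L(d_1) = \L(d_0) \subseteq \Lip(d_0)$, so no reworking of Theorem~\ref{th:maind_0} is required.
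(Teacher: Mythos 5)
Your argument is correct and is essentially identical to the paper's: the paper derives Theorem~\ref{th:maind_1} ``immediately'' from Proposition~\ref{prop:inclusiond_1} and Theorem~\ref{th:maind_0}, via precisely the sandwich \( \L(d_1) = \L(d_0) \subseteq \Lip(d_0) \) that you identify as the key point. Your fleshing-out of the ``in particular'' clause (taking \( A \) nontrivial clopen, almost disjoint families for the antichain, descending towers for the chain) matches what the paper leaves implicit in Corollary~\ref{cor:maind_0}.
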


\section{Questions and open problems} \label{sec:questions} 

By Theorem~\ref{th:Fhierarchy}, the inclusion \( \F \supseteq \L \) is a sufficient condition for \( \Deg(\F) \) being very good (under suitable determinacy assumptions).
 However, literally this is not a necessary condition: in fact, letting \( \F = \{ f \colon \pre{\omega}{\omega} \to \pre{\omega}{\omega} \mid f \text{ is two-valued} \} \cup \{ \id \} \), one gets that \( \F \not\supseteq \L \), but \( \Deg(\F) \) consists of the 
\( \F \)-nonselfdual pair \( \{ [\pre{\omega}{\omega}]_\F, [\emptyset]_\F \} \) plus a unique \( \F \)-degree above it containing all
 sets \(\emptyset,\pre{\omega}{\omega} \neq A \subseteq \pre{\omega}{\omega} \), and is thus (trivially) very good. This phenomenon is apparently due to the fact that such an \( \F \) is too 
large (that is, it is in bijection with \( \pow(\pre{\omega}{\omega}) \)), and this makes its induced degree-structure collapse to an extremely simple finite structure. In contrast, to the best of our knowledge the problem of 
whether any degree-structure induced by a \emph{not too large} \( \F \not\supseteq \L \) must be (very) bad remains open. 

\begin{question}
Work in \( \ZF + \DC(\RR) \) (or in \( \ZF + \DC(\RR) + \AD \)), and 
let \( \F \) be a collection of functions from \( \pre{\omega}{\omega} \) into itself closed under composition and containing \( \id \). Assume that \( \F \) is a surjective image of \( \pre{\omega}{\omega} \).
 Is it true that if \( \F \not\supseteq \L \) then \( \Deg(\F) \) is (very) bad?
\end{question}%

Another open problem related to the results in Section~\ref{sec:changingmetric} is the following.

\begin{question}
Is there a complete ultrametric \( d' \) on the Baire space which is compatible with its topology and such that \( \L(d) \not\subseteq \L(d'),\Lip(d'),\UCont(d') \)? Can \( d' \) be chosen so that all the hierarchies \( \Deg(\L(d')) \), \(\Deg(\Lip(d')) \), and \( \Deg(\UCont(d')) \) are (very) bad?
\end{question}

All the degree-structures considered in this paper were either very good, or else (very) bad. Thus it seems natural to ask the following:

\begin{question}
Is there any ``natural'' collection of functions from \( \pre{\omega}{\omega} \) into itself (closed under composition and containing \( \id \)) such that \( \Deg(\F) \) is good but not very good?
\end{question}%

Finally, it could be interesting to further investigate the notion of selfcontractible subsets of  metric spaces considered in Sections~\ref{sec:definitions} and~\ref{sec:contractions}.

\begin{question}
Given  \( 1 \leq \xi < \omega_1 \), is it true that every proper \( \boldsymbol{\Sigma}^0_\xi \) or proper \( \boldsymbol{\Pi}^0_\xi \) subset of \( \RR \) is selfcontractible? What if \( \RR \) is replaced by an arbitrary uncountable Polish space? Is it possible to characterize the collection of all selfcontractible subsets of \( \RR \) (or, more generally, of an arbitrary uncountable Polish space) similarly to Corollaries~\ref{cor2} and~\ref{cor:2'}?
\end{question}



\begin{thebibliography}{MRSS12}






\normalsize
\baselineskip=17pt



\bibitem[AM03]{Andretta:2003d}
A. Andretta and D.~A. Martin,
\emph{Borel-{Wadge} degrees},
Fund. Math. {177} (2003), 175--192.

\bibitem[And03]{Andretta:2003}
A. Andretta,
\emph{Equivalence between {W}adge and {L}ipschitz determinacy},
Ann. Pure Appl. Logic {123} (2003), 163--192.

\bibitem[And06]{Andretta:2006}
A. Andretta,
\emph{More on {W}adge determinacy},
Ann. Pure Appl. Logic {144} (2006), 2--32.

\bibitem[And07]{Andretta:2007}
A. Andretta,
\emph{The \textsf{SLO} principle and the {W}adge hierarchy},
Foundations of the formal sciences V, Stud. Log. (Lond.) {11} (2007), 1--38.

\bibitem[BGM12]{Marconeetal}
V. Brattka, G. Gherardi, and A. Marcone,
\emph{The Bolzano-Weierstrass theorem is the jump of weak K\"onig's lemma},
Ann. Pure Appl. Logic {163} (2012), 623--655.


\bibitem[FFT10]{Fokina:2010}
E.~B. Fokina, S.-D. Friedman, and A. T{\"o}rnquist,
\emph{The effective theory of {B}orel equivalence relations},
Ann. Pure Appl. Logic {161} (2010), 837--850.

\bibitem[Her93]{Hertling:1993}
P.~Hertling,
\emph{Topologische komplexit\"atsgrade von funktionen mit endlichem bild},
Informatik-Berichte {152}, 1993.

\bibitem[Her96]{Hertling:1996}
P.~Hertling.
\emph{Unstetigkeitsgrade von Funktionen in der effektiven Analysis},
Ph.D. Thesis, Fachbereich Informatik, FernUniversit\"{a}t Hagen, 1996.

\bibitem[IST12]{Ikegami:2012}
D.~Ikegami, P.~Schlicht, and H.~Tanaka,
\emph{Continuous reducibility for the real line},
submitted, 2013.

\bibitem[Kec95]{Kechris:1995}
A. Kechris,
\emph{Classical Descriptive Set Theory},
Graduate text in Mathematics {156}, Springer-Verlag,
  Heidelberg New York, 1995.

\bibitem[LSR88]{Louveau:1988}
A. Louveau and J. Saint-Raymond,
\emph{The strength of {B}orel {W}adge determinacy},
in: Cabal {S}eminar 81--85, 
A. Dold and B. Eckmann (eds.),
Lecture Notes in Math. {1333}, 
Springer, Berlin, 1988, 1--30.

\bibitem[Mos80]{Moschovakis:1980}
Y. Moschovakis,
\emph{Descriptive Set Theory},
North-Holland, 1980.

\bibitem[MR09]{MottoRos:2009}
L.~Motto~Ros,
\emph{Borel-amenable reducibilities for sets of reals},
J. Symbolic Logic {74} (2009), 27--49.

\bibitem[MR10a]{MottoRos:2010}
L.~Motto~Ros,
\emph{Baire reductions and good {B}orel reducibilities},
J. Symbolic Logic {75} (2010), 323--345.

\bibitem[MR10b]{MottoRos:2010b}
L.~Motto~Ros,
\emph{Beyond borel-amenability: scales and superamenable reducibilities},
Ann. Pure Appl. Logic {161} (2010), 829--836.

\bibitem[MR11]{MottoRos:2011a}
L.~Motto~Ros,
\emph{Game representations of classes of piecewise definable functions},
Mathematical Logic Quarterly {57} (2011), 95--112.

\bibitem[MRSS12]{MottoRos:2012b}
L.~Motto~Ros, P.~Schlicht, and V.~Selivanov.
\emph{Wadge-like reducibilities on arbitrary quasi-{P}olish spaces},
to appear on Mathematical Structures in Computer Science, 2013.

\bibitem[Par63]{Parovivcenko:1963}
I.~I. Parovi{\v c}enko,
\emph{A universal bicompact of weight $\aleph$},
Dokl. Akad. Nauk SSSR {150} (1963), 36--39.

\bibitem[Sch12]{Schlicht:2012}
P.~Schlicht,
\emph{Continuous reducibility and dimension},
submitted, 2013.

\bibitem[VW78]{VanWesep:1978}
R.~A. Van~Wesep,
\emph{Wadge degrees and descriptive set theory},
in: Cabal Seminar 76-77,
A.~S. Kechris and Y.~N. Moschovakis (eds.), 
Lecture Notes in Math. {689},
Springer-Verlag, 1978.


\end{thebibliography}
\end{document}